\newtheorem{theorem}{Theorem}
\newtheorem{corollary}[theorem]{Corollary}
\newtheorem{prop}[theorem]{Proposition}
\newtheorem{proposition}[theorem]{Proposition}
\newtheorem{lemma}[theorem]{Lemma}
\newtheorem{lem}[theorem]{Lemma}
\newtheorem{conjecture}[theorem]{Conjecture}
\newtheorem{example}{Example}
\newtheorem{definition}{Definition}
\newtheorem{remark}[theorem]{Remark}
\newtheorem{exercise}{Exercise}
\numberwithin{theorem}{section}
\numberwithin{equation}{section}
\title{Stanley symmetric functions \\ and  \\Peterson algebras}
\author{Thomas Lam}
\email{tfylam@umich.edu}
 \thanks{The author was supported by NSF grants DMS-0652641 and DMS-0901111, and by a Sloan Fellowship.}
\date{July 16, 2010}
\def\A{{\mathbb A}}
\def\B{{\mathbb B}}
\def\a{{\mathbf{a}}}
\def\af{{\mathrm{af}}}
\def\afA{{\mathbb A}_{\mathrm{af}}}
\def\ah{\tilde \h}
\def\aNC{(\A_\af)_0}
\def\Pet{{\mathbb P}}
\def\aB{{\mathbb B}_{\af}}
\def\aI{I_\af}
\def\al{\alpha}
\def\aPsi{\Xi_\af}
\def\aW{W_\af}
\def\Bo{{\mathcal B}}
\def\Des{\mathrm{Des}}
\def\fW{W}
\def\fI{I}
\def\Frac{{\mathrm{Frac}}}
\def\Fun{{\mathrm{Fun}}}
\def\Gr{{\mathrm{Gr}}}
\def\h{{\mathbf{h}}}
\def\i{{\mathbf{i}}}
\def\id{\mathrm{id}}
\def\j{{\mathbf{j}}}
\def\la{\lambda}
\def\La{\Lambda}
\def\Q{{\mathbb Q}}
\def\om{\omega}
\def\s{{\mathbf{s}}}
\def\Sym{{\mathrm{Sym}}}
\def\tF{\tilde F}
\def\tS{\tilde S}
\def\wt{{\mathrm{wt}}}
\def\Z{{\mathbb Z}}
\newcommand\os[1]{\overline{#1}\;}
\newcommand\ip[1]{\langle #1 \rangle}
\begin{document}
\begin{abstract} These are (mostly) expository notes for lectures on affine Stanley symmetric functions given at the Fields Institute in 2010.  We focus on the algebraic and combinatorial parts of the theory.  The notes contain a number of exercises and open problems.
\end{abstract}

\maketitle
Stanley symmetric functions are a family $\{F_w \mid w \in S_n\}$ of symmetric functions indexed by permutations.  They were invented by Stanley \cite{Sta} to enumerate the reduced words of elements of the symmetric group.  The most important properties of the Stanley symmetric functions are their symmetry, established by Stanley, and their Schur positivity, first proven by Edelman and Greene \cite{EG}, and by Lascoux and Sch\"{u}tzenberger \cite{LS:Schub}.

Recently, a generalization of Stanley symmetric functions to affine permutations was developed in \cite{Lam:affStan}.  These affine Stanley symmetric functions turned out to have a natural geometric interpretation \cite{Lam:Schub}: they are pullbacks of the cohomology Schubert classes of the affine flag variety $LSU(n)/T$ to the affine Grassmannian (or based loop space) $\Omega SU(n)$ under the natural map $\Omega SU(n) \to LSU(n)/T$.  The combinatorics of reduced words and the geometry of the affine homogeneous spaces are connected via the nilHecke ring of Kostant and Kumar \cite{KK}, together with a remarkable commutative subalgebra due to Peterson \cite{Pet}.    The symmetry of affine Stanley symmetric functions follows from the commutativity of Peterson's subalgebra, and the positivity in terms of affine Schur functions is established via the relationship between affine Schubert calculus and quantum Schubert calculus \cite{LS:QH,LL}.  The affine-quantum connection was also discovered by Peterson.  

The affine generalization also connects Stanley symmetric functions with the theory of Macdonald polynomials \cite{Mac} -- my own involvement in this subject began when I heard a conjecture of Mark Shimozono relating the Lapointe-Lascoux-Morse $k$-Schur functions \cite{LLM} to the affine Grassmannian.

While the definition of (affine) Stanley symmetric functions does not easily generalize to other (affine) Weyl groups (see \cite{BH,BL,FK:C,LSS:C,Pon}), the algebraic and geometric constructions mentioned above do.

This article introduces Stanley symmetric functions and affine Stanley symmetric functions from the combinatorial and algebraic point of view.  The goal is to develop the theory (with the exception of positivity) without appealing to geometric reasoning.  The notes are aimed at an audience with some familiarity with symmetric functions, Young tableaux and Coxeter groups/root systems.

The first third (Sections \ref{sec:combin} - \ref{sec:affine}) of the article centers on the combinatorics of reduced words.  We discuss reduced words in the (affine) symmetric group, the definition of the (affine) Stanley symmetric functions, and introduce the Edelman-Greene correspondence.  Section \ref{sec:Weyl} reviews the basic notation of Weyl groups and affine Weyl groups.  In Sections \ref{sec:algebra}-\ref{sec:finiteFS} we introduce and study four algebras: the nilCoxeter algebra, the Kostant-Kumar nilHecke ring, the Peterson centralizer subalgebra of the nilHecke ring, and the Fomin-Stanley subalgebra of the nilCoxeter algebra.  The discussion in Section \ref{sec:finiteFS} is new, and is largely motivated by a conjecture (Conjecture \ref{conj:LP}) of the author and Postnikov.  In Section \ref{sec:geom}, we give a list of geometric interpretations and references for the objects studied in the earlier sections.

%The appendix (Section \ref{sec:DEG}) discusses a relation between Coxeter-Knuth relations and elementary dual equivalences which is likely known, but for which we could not find in the literature.  
We have not intended to be comprehensive, especially with regards to generalizations and variations.  There are four such which we must mention: 
\begin{enumerate} 
\item
There is an important and well-developed connection between Stanley symmetric functions and Schubert polynomials, see \cite{BJS, LS:Schub}.  
\item
There is a theory of (affine) Stanley symmetric functions in classical types; see  \cite{BH,BL,FK:C,LSS:C,Pon}.
\item
Nearly all the constructions here have $K$-theoretic analogues.  For full details see \cite{Buc, BKSTY,FK:K,LSS}.
\item
There is a $t$-graded version of the theory.  See \cite{LLM,LM:core,LM:ktab}.
\end{enumerate}

We have included exercises and problems throughout which occasionally assume more prerequisites.  The exercises vary vastly in terms of difficulty.  Some exercises essentially follow from the definitions, but other problems are questions for which I do not know the answer to.  

\section{Stanley symmetric functions and reduced words}
\label{sec:combin}
For an integer $m \geq 1$, let $[m] = \{1,2,\ldots,m\}$.  For a partition (or composition) $\la = (\la_1,\la_2,\ldots,\la_\ell)$, we write $|\la| = \la_1 + \cdots + \la_\ell$.  The {\it dominance order} $\preceq$ on partitions is given by $\la \prec \mu$ if for some $k > 0 $ we have $\la_1 + \la_2 + \cdots + \la_j = \mu_1 + \mu_2 + \cdots + \mu_j$ for $1 \leq j <k$ and $\la_1 + \la_2 + \cdots + \la_k < \mu_1 + \mu_2 + \cdots + \mu_k$.  The {\it descent set} $\Des(\a)$ of a word $a_1a_2\cdots a_n$ is given by $\Des(\a) = \{i \in [n-1] \mid a_i > a_{i+1}\}$.

\subsection{Young tableaux and Schur functions}
We shall assume the reader has some familiarity with symmetric functions and Young tableaux \cite{Mac} \cite[Ch. 7]{EC2}.  We write $\La$ for the ring of symmetric functions.  We let $m_\la$, where $\la$ is a partition, denote the monomial symmetric function, and let $h_k$ and $e_k$,  for integers $k \geq 1$, denote the homogeneous and elementary symmetric functions respectively.  For a partition $\la = (\la_1,\la_2,\ldots,\la_\ell)$, we define $h_\la:= h_{\la_1}h_{\la_2}\cdots h_{\la_\ell}$, and similarly for $e_\la$.  We let $\ip{.,.}$ denote the Hall inner product of symmetric functions.  Thus $\ip{h_\la,m_\mu} = \ip{m_\la,h_\mu}=\ip{s_\la,s_\mu} = \delta_{\la\mu}$.

We shall draw Young diagrams in English notation.  A tableau of shape $\la$ is a filling of the Young diagram of $\la$ with integers.  A tableau is {\it column-strict} (resp. {\it row-strict}) if it is increasing along columns (resp. rows).  A tableau is {\it standard} if it is column-strict and row-strict, and uses each number $1,2,\ldots,|\la|$ exactly once.  A tableau is {\it semi-standard} if it is column-strict, and weakly increasing along rows.  Thus the tableaux
$$
\tableau[sY]{1&2&4&5\\3&6\\7} \qquad \tableau[sY]{1&1&2&3\\4&4\\6}
$$
are standard and semistandard respectively.  The weight $\wt(T)$ of a tableau $T$ is the composition $(\alpha_1,\alpha_2,\ldots)$ where $\alpha_i$ is equal to the number of $i$'s in $T$.  The Schur function $s_\la$ is given by
$$
s_\la(x_1,x_2,\ldots) = \sum_T x^{\wt(T)}
$$
where the summation is over semistandard tableaux of shape $\la$, and for a composition $\alpha$, we define $x^\alpha:=x_1^{\alpha_1}x_2^{\alpha_2} \cdots$.  For a standard Young tableau $T$ we define $\Des(T) = \{i \mid \text{$i+1$ is in a lower row than $i$}\}$.  We also write $f^\la$ for the number of standard Young tableaux of shape $\la$.  Similar definitions hold for skew shapes $\la/\mu$.

We shall often use the Jacobi-Trudi formula for Schur functions (see \cite{Mac,EC2}).
\begin{theorem}\label{thm:JT} 
Let $\la = (\la_1\geq \la_2 \geq \cdots \geq \la_\ell >0)$ be a partition.  Then
$$
s_\la = {\rm det}(h_{\la_i+j-i})_{i,j=1}^{\ell}.
$$
\end{theorem}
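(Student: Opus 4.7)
The plan is to prove the Jacobi-Trudi identity via the Lindstr\"om-Gessel-Viennot (LGV) lemma, realizing both sides as generating functions for configurations of lattice paths. The key observation is that $h_k(x_1,x_2,\ldots)$ is the generating function for weakly increasing sequences of positive integers of length $k$, and such a sequence $i_1 \leq i_2 \leq \cdots \leq i_k$ can be encoded as a lattice path in $\Z^2$ using unit east and north steps, with each east step at height $h$ contributing weight $x_h$.

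Using this, I would place source points $u_i = (-i, 0)$ and sink points $v_j = (\la_j - j, N)$ for $N$ sufficiently large. A lattice path from $u_i$ to $v_j$ has exactly $\la_j + i - j$ east steps, so its total weighted count $e(u_i, v_j)$ equals $h_{\la_j + i - j}$. Since $\det(h_{\la_i + j - i}) = \det(h_{\la_j + i - j})$ (the two matrices are transposes of each other), it suffices to compute $\det(e(u_i, v_j))_{i,j=1}^{\ell}$.

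Next I would invoke the LGV lemma, which expresses this determinant as $\sum_{\sigma \in S_\ell} \mathrm{sgn}(\sigma)$ times the weighted count of path tuples $(P_1, \ldots, P_\ell)$ with $P_i : u_i \to v_{\sigma(i)}$, restricted to vertex-disjoint tuples. The lemma is proved by the usual sign-reversing involution: in any tuple containing two paths sharing a vertex, pick the lexicographically first shared vertex and swap the tails of the two paths meeting there, which is a fixed-point-free involution that composes $\sigma$ with a transposition, so intersecting tuples cancel in pairs. Because both $(u_i)_{i=1}^\ell$ and $(v_j)_{j=1}^\ell$ are strictly decreasing in $x$-coordinate (the latter using $\la_j \geq \la_{j+1}$, which gives $\la_j - j > \la_{j+1} - (j+1)$), the identity is the unique permutation admitting a non-intersecting tuple, and it contributes sign $+1$.

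The final step is a bijection between non-intersecting tuples $(P_1,\ldots,P_\ell)$ with $P_i : u_i \to v_i$ and semistandard tableaux $T$ of shape $\la$: record, in order, the heights of the $\la_i$ east steps of $P_i$ as the entries of row $i$ of $T$. Weak increase along rows is immediate from the path structure, and the monomial weight of the tuple is exactly $x^{\wt(T)}$. The delicate point I expect to require the most care is verifying that strict increase down columns of $T$ is equivalent to vertex-disjointness of consecutive paths $P_i$ and $P_{i+1}$; once this is established, summing over non-intersecting tuples reproduces the tableau-theoretic definition of $s_\la$ and completes the proof.
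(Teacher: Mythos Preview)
Your LGV argument is a standard and correct proof of the Jacobi--Trudi identity; in fact it is essentially the proof given in \cite[Ch.~7]{EC2}, one of the references the paper cites. Note, however, that the paper does not actually prove Theorem~\ref{thm:JT}: it is quoted as a classical result with references to \cite{Mac,EC2}, so there is no ``paper's own proof'' to compare against. Your outline is sound; the only points to tighten when writing it out are (i) working first in finitely many variables $x_1,\ldots,x_N$ (so that the lattice is bounded above) and then passing to the inverse limit, and (ii) the column-strictness $\Leftrightarrow$ non-intersection verification you already flagged, which goes through by comparing the $x$-coordinates at which $P_i$ and $P_{i+1}$ first reach each height $h$.
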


\subsection{Permutations and reduced words}\label{sec:perm}
Let $S_n$ denote the symmetric group of permutations on the letters $[n]$.  We think of permutations $w,v \in S_n$ as bijections $[n] \to [n]$, so that the product $w \, v \in S_n$ is the composition $w \circ v$ as functions.  The simple transposition $s_i \in S_n$, $i \in \{1,2,\ldots,n-1\}$ swaps the letters $i$ and $i+1$, keeping the other letters fixed.  The symmetric group is generated by the $s_i$, with the relations
\begin{align*}
s_i^2 &= 1 & \mbox{for $1 \leq i \leq n-1$}\\
s_i s_{i+1} s_i &=s_{i+1} s_i s_{i+1} &\mbox{for $1 \leq i \leq n-2$}\\
s_i s_j &= s_j s_i &\mbox{for $|i-j|>1$}
\end{align*}
The {\it length} $\ell(w)$ of a permutation $w \in S_n$ is the length of the shortest expression $w = s_{i_1} \cdots s_{i_\ell}$ for $w$ as a product of simple generators.  Such a shortest expression is called a {\it reduced expression} for $w$, and the word $i_1 i_2 \cdots i_\ell$ is a {\it reduced word} for $w$.  Let $R(w)$ denote the set of reduced words of $w \in S_n$.  We usually write permutations in one-line notation, or alternatively give reduced words.  For example $3421 \in S_4$ has reduced word $23123$.

There is a natural embedding $S_n \hookrightarrow S_{n+1}$ and we will sometimes not distinguish between $w \in S_n$ and its image in $S_{n+1}$ under this embedding.  

\subsection{Reduced words for the longest permutation}
The longest permutation $w_0 \in S_n$ is $w_0 = n\, (n-1)\, \cdots \,2\,1$ in one-line notation.  Stanley \cite{Sta} conjectured the following formula for the number of reduced words of $w_0$, which he then later proved using the theory of Stanley symmetric functions.  Let $\delta_n = (n,n-1,\ldots,1)$ denote the staircase of size $n$.
\begin{theorem}[\cite{Sta}]\label{thm:longestword}
The number $R(w_0)$ of reduced words for $w_0$ is equal to the number $f^{\delta_{n-1}}$ of staircase shaped standard Young tableaux.
\end{theorem}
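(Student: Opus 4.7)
The plan is to derive the theorem from the theory of Stanley symmetric functions, which is historically how Stanley first proved it. Introduce the Stanley symmetric function
\[
F_w(x_1,x_2,\ldots) = \sum_{\mathbf{a} \in R(w)}\; \sum x_{i_1}x_{i_2}\cdots x_{i_\ell},
\]
where $\ell = \ell(w)$ and the inner sum runs over weakly increasing sequences $i_1 \le \cdots \le i_\ell$ with $i_j < i_{j+1}$ whenever $j \in \Des(\mathbf{a})$. Two bookkeeping observations cut the problem down. First, the coefficient of the squarefree monomial $x_1 x_2 \cdots x_\ell$ in $F_w$ equals $|R(w)|$, since a strictly increasing index sequence automatically satisfies the descent condition and each reduced word then contributes exactly once. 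Second, the coefficient of $x_1 x_2 \cdots x_N$ in a Schur function $s_\lambda$ with $N = |\lambda|$ equals $f^\lambda$, because a semistandard tableau of weight $(1,1,\ldots,1)$ is precisely a standard Young tableau. Since $|\delta_{n-1}| = \binom{n}{2} = \ell(w_0)$, the theorem reduces to the Schur identity
\[
F_{w_0} = s_{\delta_{n-1}}.
\]

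To prove this identity, I would invoke the Edelman-Greene correspondence (to be developed later in the paper), which supplies a bijection between $R(w)$ and pairs $(P,Q)$, where $P$ is an EG-tableau for $w$ of some shape $\mu$ and $Q$ is a standard Young tableau of shape $\mu$. This yields the Schur expansion $F_w = \sum_\mu a_{w,\mu}\, s_\mu$, where $a_{w,\mu}$ counts EG-tableaux for $w$ of shape $\mu$. The target identity $F_{w_0} = s_{\delta_{n-1}}$ thus reduces to showing that there is a unique EG-tableau for $w_0$, and that it has shape $\delta_{n-1}$.

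The main obstacle is this uniqueness statement. A convenient route is to note that any EG-tableau for an element of $S_n$ uses only the letters $1,\ldots,n-1$ and is strictly increasing down columns; combined with elementary bounds on insertion, the shape $\mu$ fits inside an $(n-1)\times(n-1)$ square. The size constraint $|\mu|=\binom{n}{2}$ then forces $\mu=\delta_{n-1}$, and an explicit check, for instance that EG-insertion of any reduced word for $w_0$ yields the staircase tableau whose $i$-th row is filled with $i,i+1,\ldots,n-1$, establishes uniqueness of $P$. An alternative route that bypasses EG is to verify symmetry of $F_{w_0}$ directly and then identify it with $s_{\delta_{n-1}}$ via the Jacobi-Trudi formula of Theorem~\ref{thm:JT} by comparing monomial coefficients, but this route appears less transparent than the insertion argument.
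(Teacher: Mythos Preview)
Your overall strategy matches the paper's: invoke the Edelman--Greene bijection, show that $w_0$ has a unique EG-tableau and that its shape is $\delta_{n-1}$, and conclude $|R(w_0)| = f^{\delta_{n-1}}$. (The paper actually skips the detour through $F_{w_0}$ and reads the count directly off Theorem~\ref{thm:EG}: the bijection already gives $|R(w)| = \sum_\mu (\#\text{EG-tableaux of shape }\mu)\,f^\mu$, so the symmetric-function identity $F_{w_0}=s_{\delta_{n-1}}$ is a corollary, not a prerequisite. Your route is fine, just slightly longer.)

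There is, however, a genuine gap in your shape argument. You assert that an EG-tableau for $S_n$ has shape contained in the $(n-1)\times(n-1)$ square, and that the size constraint $|\mu|=\binom{n}{2}$ then \emph{forces} $\mu=\delta_{n-1}$. This inference is false: for $n=4$, the shapes $(3,3)$ and $(2,2,2)$ both have six boxes and fit inside the $3\times 3$ square. What you need is the sharper containment $\mu\subseteq\delta_{n-1}$, which is exactly Lemma~\ref{lem:EGshape} in the paper. The point is that an EG-tableau is both row-strict \emph{and} column-strict (you only invoked column-strictness), so the entry in box $(i,j)$ is at least $i+j-1$; since entries are bounded by $n-1$, this yields $i+j\le n$, i.e.\ the shape lies inside the staircase. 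Once $\mu\subseteq\delta_{n-1}$ and $|\mu|=|\delta_{n-1}|$, equality is forced, and the filling is then uniquely determined as $T_{i,j}=i+j-1$. With this correction your plan goes through and coincides with the paper's proof.
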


\subsection{The Stanley symmetric function}
\begin{definition}[Original definition] \label{def:storig}
Let $w \in S_n$.  Define the Stanley symmetric function\footnote{Our conventions differ from Stanley's original definitions by $w \leftrightarrow w^{-1}$.} $F_w$ by
$$
F_w(x_1,x_2,\ldots) = \sum_{\stackrel{a_1 a_2 \ldots a_\ell \in  R(w) \ \  1 \leq b_1 \leq b_2 \leq \cdots \leq b_\ell}{a_i < a_{i+1} \implies b_i < b_{i+1}}} x_{b_1} x_{b_2} \cdots x_{b_\ell}.
$$
\end{definition}

We shall establish the following fundamental result \cite{Sta} in two different ways in Sections \ref{sec:EG} and \ref{sec:algebra}, but shall assume it for the remainder of this section.
\begin{theorem}[\cite{Sta}]\label{thm:stsymm}
The generating function $F_w$ is a symmetric function.
\end{theorem}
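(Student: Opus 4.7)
The plan is to realize $F_w$ as a coefficient in a product of mutually commuting operators in the nilCoxeter algebra, and so deduce symmetry from invariance of that product -- this is the Fomin--Stanley strategy foreshadowed by the introduction. Let $\mathcal{A}_n$ denote the \emph{nilCoxeter algebra} of $S_n$: the associative algebra with generators $u_1, \ldots, u_{n-1}$ and relations $u_i^2 = 0$, $u_i u_{i+1} u_i = u_{i+1} u_i u_{i+1}$, and $u_i u_j = u_j u_i$ for $|i-j| \geq 2$. By Matsumoto's theorem, $u_w := u_{a_1} \cdots u_{a_\ell}$ depends only on $w \in S_n$ when $a_1 \cdots a_\ell \in R(w)$, the $u_w$ form a basis of $\mathcal{A}_n$, and $u_{a_1} \cdots u_{a_m} = 0$ whenever $a_1 \cdots a_m$ is not a reduced word. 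For a formal variable $x$, set
$$
A(x) := (1 + x\, u_{n-1})(1 + x\, u_{n-2}) \cdots (1 + x\, u_1) \in \mathcal{A}_n[[x]],
$$
so that $A(x) = \sum_{k \geq 0} x^k h_k$ with $h_k = \sum_{n-1 \geq i_1 > \cdots > i_k \geq 1} u_{i_1} \cdots u_{i_k}$.

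Next I would check, directly from Definition \ref{def:storig}, that
$$
A(x_1)\, A(x_2)\, A(x_3) \cdots \ = \ \sum_{w \in S_n} F_w(x_1, x_2, \ldots)\, u_w.
$$
Expanding the left-hand side produces terms $\prod_k x_k^{p_k}\, u_{\a^{(1)}} u_{\a^{(2)}} \cdots$, where each $\a^{(k)}$ is a strictly decreasing word of length $p_k$; such a term equals $\prod_k x_k^{p_k}\, u_w$ exactly when the concatenation $\a = \a^{(1)} \a^{(2)} \cdots$ is a reduced word for $w$. By the compatibility condition in Definition \ref{def:storig}, a pair $(\a, \mathbf{b})$ in the defining sum is the same data as a factorization of $\a \in R(w)$ into strictly decreasing runs, where the $k$-th run records the positions with $b_j = k$; summing over such data recovers $\sum_w F_w u_w$ on the right.

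Given this translation, symmetry of each $F_w$ reduces to the single identity
$$
A(x)\, A(y) \ = \ A(y)\, A(x) \qquad \text{in } \mathcal{A}_n[[x,y]].
$$
The main obstacle is establishing this commutation. I would argue by induction on $n$, using the nil Yang--Baxter relation
$$
(1 + a\, u_i)(1 + (a+b)\, u_{i+1})(1 + b\, u_i) \ = \ (1 + b\, u_{i+1})(1 + (a+b)\, u_i)(1 + a\, u_{i+1}),
$$
which is checked by direct expansion using only $u_i^2 = 0$ and $u_i u_{i+1} u_i = u_{i+1} u_i u_{i+1}$. Combined with the trivial commutations $u_i u_j = u_j u_i$ for $|i-j|\geq 2$, this identity allows one to migrate the $u_{n-1}$-factors of $A(x) A(y)$ past those on the other side, reducing the equality to the analogous one in the subalgebra generated by $u_1, \ldots, u_{n-2}$, where the inductive hypothesis applies. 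Once the commutation is in hand, invariance of the infinite product under each adjacent swap $x_i \leftrightarrow x_{i+1}$ is immediate, and hence so is the symmetry of $F_w$.
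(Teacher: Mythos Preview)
Your approach is precisely the paper's second proof (Section~\ref{sec:algebra}, the Fomin--Stanley nilCoxeter construction): realize $F_w$ as the coefficient of $A_w$ in $\h(x_1)\h(x_2)\cdots$ and reduce symmetry to the commutation $\h(x)\h(y)=\h(y)\h(x)$, established by induction on $n$. The only wrinkle is the local identity you invoke: after commuting $(1+yu_{n-1})$ leftward, the triple that appears is $(1+xu_{n-1})(1+xu_{n-2})(1+yu_{n-1})$, whose middle parameter is $x$, not $x+y$, so your Yang--Baxter relation does not apply verbatim. The paper instead verifies the tailored identity
\[
(1+xA_{i+1})(1+xA_i)(1+yA_{i+1})=(1+yA_{i+1})(1+yA_i)(1+xA_{i+1})(1-yA_i)(1+xA_i),
\]
which, together with the inductive hypothesis for $S_{n-1}$ and the cancellation $(1-yA_{n-2})(1+yA_{n-2})=1$, completes the swap. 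Your Yang--Baxter relation can be massaged into this form (insert an identity factor $(1+yA_i)(1-yA_i)=1$ and specialize), so the difference is cosmetic rather than conceptual; but as written, your ``migrate the $u_{n-1}$-factors'' step needs this adjustment to go through.
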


A word $a_1 a_2 \cdots a_\ell$ is {\it decreasing} if $a_1 > a_2 > \cdots > a_\ell$.  A permutation $w \in S_n$ is decreasing if it has a (necessarily unique) decreasing reduced word.  The identity $\id \in S_n$ is considered decreasing.  A {\it decreasing factorization} of $w \in S_n$ is an expression $w = v_1 v_2 \cdots v_r$ such that $v_i \in S_n$ are decreasing, and $\ell(w) = \sum_{i=1}^r \ell(v_i)$.

\begin{definition}[Decreasing factorizations] \label{def:stdec}
Let $w \in S_n$.  Then
$$
F_w(x_1,x_2,\ldots) = \sum_{w = v_1 v_2 \cdots v_r} x_1^{\ell(v_1)} \cdots x_r^{\ell(v_r)}.
$$
\end{definition}

\begin{example}\label{ex:1323}
Consider $w = s_1s_3s_2s_3 \in S_4$.  Then $R(w) = \{1323,3123,1232\}$.  Thus
$$
F_w = m_{211}+3m_{1111} = s_{211}.
$$
The decreasing factorizations which give $m_{211}$ are $\overline{31}\;\overline{2}\;\overline{3}, \overline{1}\;\overline{32}\;\overline{3}, \overline{1}\;\overline{2}\;\overline{32}$.
\end{example}

\subsection{The code of a permutation}
Let $w \in S_n$.  The {\it code} $c(w)$ is the sequence $c(w) = (c_1, c_2, \ldots, )$ of nonnegative integers given by $c_i = \#\{j\in [n]\mid j > i \text{ and } w(j) < w(i)\}$ for $i \in [n]$, and $c_i = 0$ for $i > n$.  Note that the code of $w$ is the same regardless of which symmetric group it is considered an element of.

Let $\la(w)$ be the partition conjugate to the partition obtained from rearranging the parts of $c(w^{-1})$ in decreasing order.

\begin{example}
Let $w = 216534 \in S_6$.  Then $c(w) = (1,0,3,2,0,0,\ldots)$, and $c(w^{-1}) = (1,0,2,2,1,0,\ldots)$.  Thus $\la(w) = (4,2)$.
\end{example}

For a symmetric function $f \in \La$, let $[m_\la]f$ denote the coefficient of $m_\la$ in $f$.

\begin{prop}[\cite{Sta}] \label{prop:dominant} Let $w \in S_n$.
\begin{enumerate}
\item
Suppose $[m_\la]F_w \neq 0$.  Then $\la \prec \la(w)$.
\item
$[m_{\la(w)}]F_w = 1$.
\end{enumerate}
\end{prop}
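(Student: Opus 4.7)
The plan is to use the decreasing-factorization form of Definition 1.5.2: since $F_w$ is symmetric (Theorem 1.5), $[m_\la] F_w$ equals the number of decreasing factorizations $w = v_1 \cdots v_r$ with $\ell(v_i) = \la_i$ in order. Both parts of the proposition will follow by induction on $\ell(w)$, driven by a structural analysis of decreasing left factors.

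I would first prove a structural lemma: if $v = s_{a_1} s_{a_2} \cdots s_{a_k}$ with $a_1 > a_2 > \cdots > a_k$ is a decreasing left factor of $w$, then $\{a_1,\ldots,a_k\} \subseteq I(w) := \{i : c_i(w^{-1}) > 0\}$. The proof observes that left multiplication by $s_{a_l}$ (with $a_l > a_j$) swaps only the positions of values $a_l$ and $a_l+1$ in the one-line notation, so positions of all values $\leq a_j$ in the peeled permutation $s_{a_{j-1}}\cdots s_{a_1} w$ agree with those in $w$. When $s_{a_j}$ is applied next, its being a left descent forces value $a_j+1$ to lie earlier than value $a_j$ in the peeled permutation; by position-preservation, the corresponding position in $w$ holds some value $\geq a_j+1 > a_j$, so $c_{a_j}(w^{-1}) > 0$. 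This yields $\ell(v) \leq |I(w)| = \la(w)_1$, with equality forcing $\mathrm{supp}(v) = I(w)$. Hence any decreasing left factor $v_1^*$ of maximal length $\la(w)_1$ is unique; existence follows from the greedy peeling (try $s_{n-1}, s_{n-2}, \ldots, s_1$ in turn, peel each that is currently a left descent), together with a downward induction on $i$ showing every $i \in I(w)$ is included.

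For part (2), I would set $u^* = (v_1^*)^{-1} w$. A direct calculation gives the code transformation under left multiplication by a single left descent $s_a$: $c_a \mapsto c_{a+1}$ and $c_{a+1} \mapsto c_a - 1$ (using old values), other entries fixed. Iterating along the greedy peeling of $v_1^*$, each peel $s_b$ with $b \in I(w)$ acts on its original entry $c_b(w^{-1})$ (previous peels occur at positions $> b$, preserving position $b$) and replaces that occurrence in the multiset by $c_b(w^{-1}) - 1$. So the multiset of entries of $c((u^*)^{-1})$ equals that of $c(w^{-1})$ with each nonzero entry reduced by $1$, giving $\la(u^*) = (\la(w)_2, \la(w)_3, \ldots)$. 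By induction on $\ell(w)$, the greedy factorization of $u^*$ is the unique decreasing factorization of shape $\la(u^*)$; prepending $v_1^*$ gives the unique decreasing factorization of $w$ of shape $\la(w)$, proving part (2).

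Part (1) proceeds by induction on $\ell(w)$. For a decreasing factorization $w = v_1\cdots v_r$ with sorted shape $\mu$, the sub-factorization of $v_1^{-1} w$ has sorted shape $\mu^{(u)} \preceq \la(v_1^{-1} w)$ by induction. Using the standard fact that sorted concatenation is monotone under dominance (if $\alpha \preceq \beta$ then $\mathrm{sort}(k,\alpha) \preceq \mathrm{sort}(k,\beta)$), the dominance bound reduces to: $\mathrm{sort}(\ell(v), \la(v^{-1} w)) \preceq \la(w)$ for every decreasing left factor $v$ of $w$. The main obstacle is this dominance lemma. I would tackle it via the identity $\la(w)_1 + \cdots + \la(w)_j = \sum_i \min(c_i(w^{-1}), j)$, expressing dominance partial sums through the code, and track how each peeling $s_{a_l}$ in the sequence defining $v$ reduces these partial sums, showing the cumulative reduction exactly accounts for $\ell(v)$ plus the appropriate partial sum of $\la(v^{-1} w)$.
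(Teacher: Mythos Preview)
Your proof is correct and shares its central tool with the paper: the rule that left-multiplying $w$ by a left-descent $s_a$ transforms $c(w^{-1})$ via $(\ldots,c_a,c_{a+1},\ldots)\mapsto(\ldots,c_{a+1},c_a-1,\ldots)$. Both you and the paper deduce from this that peeling a decreasing $v$ off the left of $w$ replaces the multiset of code entries by decreasing $\ell(v)$ specific entries (those $c_a$ with $a\in\mathrm{supp}(v)$) by $1$ each; note this already gives your structural lemma $\mathrm{supp}(v)\subseteq I(w)$ without the separate position-tracking argument.

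The difference is organizational. For (1), the paper proceeds more directly: iterate the observation over the full factorization $w=v_1\cdots v_r$, viewing the code entries as column heights of the Young diagram of $\la(w)$. After the first $j$ peels the total number of boxes removed is $\la_1+\cdots+\la_j$, but each column loses at most one box per peel and cannot go below zero, so at most $\sum_i\min(c_i(w^{-1}),j)=\la(w)_1+\cdots+\la(w)_j$ boxes have been removed. This yields $\la\preceq\la(w)$ in one stroke, with no induction and no intermediate dominance lemma. Your route through induction and the lemma $\mathrm{sort}(\ell(v),\la(v^{-1}w))\preceq\la(w)$ is valid (conjugating turns it into $\nu\preceq\tilde\nu+(1^k)$, which follows since each of the top $j$ parts of $\nu$ drops by at most $1$), but it is more machinery for the same conclusion. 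For (2) your argument and the paper's are essentially identical.
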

\begin{proof}
Left multiplication of $w$ by $s_i$ acts on $c(w^{-1})$ by
$$
(c_1,\ldots , c_i,c_{i+1},\ldots)\longmapsto
(c_1,\ldots,c_{i+1},c_i-1,\ldots)$$
whenever $\ell(s_iw) < \ell(w)$.  Thus factorizing a decreasing permutation $v$ out of $w$ from the left will decrease $\ell(v)$ different entries of $c(w^{-1})$ each by $1$.  (1) follows easily from this observation.

To obtain (2), one notes that there is a unique decreasing permutation $v$ of length $\mu_1(w)$ such that $\ell(w) = \ell(v^{-1}w) + \ell(v)$.
\end{proof}

\begin{example}
Continuing Example \ref{ex:1323}, one has $w = 2431$ in one-line notation.  Thus $\la(w) =  (2,1,1)$, agreeing with Proposition \ref{prop:dominant} and our previous calculation.
\end{example}

\subsection{Fundamental Quasi-symmetric functions}
Let $D \subset [n-1]$.  Define the (Gessel) {\it fundamental quasi-symmetric function} $L_D$ by 
$$
L_D(x_1,x_2,\ldots) = \sum_{\stackrel{1 \leq b_1 \leq b_2 \cdots \leq b_n}{i \in D \implies b_{i+1} > b_i}} x_{b_1}x_{b_2} \cdots x_{b_n}.
$$
Note that $L_D$ depends not just on the set $D$ but also on $n$.

A basic fact relating Schur functions and fundamental quasi-symmetric functions is:
\begin{prop}\label{prop:Schurquasi}
Let $\la$ be  a partition.  Then
$$
s_\lambda = \sum_T L_{\Des(T)}.
$$
\end{prop}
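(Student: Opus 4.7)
The plan is to prove the identity via a weight-preserving bijection between semistandard Young tableaux of shape $\la$ and pairs $(S,b)$, where $S$ is a standard Young tableau of shape $\la$ and $b = (b_1 \le b_2 \le \cdots \le b_n)$ (with $n = |\la|$) is a weakly increasing integer sequence satisfying $b_i < b_{i+1}$ whenever $i \in \Des(S)$. Granting this bijection, summing $x_{b_1}\cdots x_{b_n}$ over all compatible $b$ for a fixed $S$ gives $L_{\Des(S)}$ directly from the definition, and reorganizing $s_\la = \sum_T x^{\wt(T)}$ over SSYTs by the corresponding $S$ yields $\sum_S L_{\Des(S)}$.

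I would construct the bijection via a standardization map. Given an SSYT $T$ of shape $\la$, define $S = \mathrm{std}(T)$ by relabeling, for each value $v$ in turn, the $v$-boxes with consecutive integers in order of increasing column; set $b_i$ to be the value appearing in $T$ at the box that acquires label $i$. The inverse map replaces each label $i$ in $S$ by $b_i$.

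The technical core is verifying two compatibility properties. The first relies on a structural observation about SSYTs: if two boxes share a value $v$ and lie in different rows, the box in the lower row sits strictly to the left of the box in the upper row. (Otherwise, the box at the intersection of the upper row and the lower-or-equal column would force a contradiction between column-strictness and row weak-increase.) Consequently, consecutive standardized labels $i, i+1$ with $b_i = b_{i+1}$ place $i+1$ weakly above $i$, so $i \notin \Des(S)$; contrapositively, $i \in \Des(S)$ forces $b_i < b_{i+1}$. The second property is that the inverse produces an SSYT: row weak-increase is immediate from the left-to-right increase of labels in $S$ combined with the monotonicity of $b$; column strictness requires that for labels $i < j$ in a common column of $S$ (with $i$ above $j$), the rows traced by $i, i+1, \ldots, j$ begin strictly higher and end strictly lower, so at least one consecutive step $k \mapsto k+1$ with $i \le k < j$ descends to a strictly lower row, forcing $k \in \Des(S)$ and hence $b_k < b_{k+1}$, which together with weak monotonicity elsewhere yields $b_i < b_j$.

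The main obstacle is choosing the standardization convention (ordering equal-valued boxes left-to-right by column) so that a single structural lemma on SSYTs powers both the descent-compatibility of the forward map and the SSYT-ness of the inverse map; once this is set up, the rest is bookkeeping.
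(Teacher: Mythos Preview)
The paper does not actually supply a proof of this proposition; it is stated as a ``basic fact'' and used as input. Your argument via the standardization bijection is the standard proof and is correct as written: the structural lemma about equal entries in an SSYT (lower row forces strictly left column) is exactly what pins down descent-compatibility of $\mathrm{std}(T)$, and your pigeonhole argument on the sequence of rows $r_i,r_{i+1},\ldots,r_j$ cleanly yields column-strictness for the inverse. There is nothing to compare against in the paper, but nothing is missing from your proposal either.
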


\begin{definition}[Using quasi symmetric functions] \label{def:stquasi}
Let $w \in S_n$.  Then
$$
F_w(x_1,x_2,\ldots) = \sum_{\a \in R(w^{-1})} L_{\Des(\a)}.
$$
\end{definition}

\begin{example}
Continuing Example \ref{ex:1323}, we have $F_w = L_{2} + L_{1}+L_{3}$, where all subsets are considered subsets of $[3]$.  Note that these are exactly the descent sets of the
tableaux
$$
\tableau[sY]{1&2&3\\4} \qquad \tableau[sY]{1&2&4\\3} \qquad \tableau[sY]{1&3&4\\2}
$$
\end{example}

%We shall relate Proposition \ref{prop:Schurquasi} and Definition \ref{def:stquasi} in Theorem \ref{thm:EG} below.

\subsection{Exercises}
\label{ssec:combinprob}
\begin{enumerate}
\item 
Prove that $|c(w)|:=\sum_i c_i(w)$ is equal to $\ell(w)$.
\item
Let $S_\infty = \cup_{n \geq 1} S_n$, where permutations are identified under the embeddings $S_1 \hookrightarrow S_2 \hookrightarrow S_3 \cdots$.  
Prove that $w \longmapsto c(w)$ is a bijection between $S_\infty$ and nonnegative integer sequences with finitely many non-zero entries.
\item
Prove the equivalence of Definitions \ref{def:storig}, \ref{def:stdec}, and \ref{def:stquasi}.  
\item
What happens if we replace decreasing factorizations by increasing factorizations in Definition \ref{def:stdec}?
\item
What is the relationship between $F_w$ and $F_{w^{-1}}$?
\item 
(Grassmannian permutations)
A permutation $w \in S_n$ is {\it Grassmannian} if it has at most one descent.
\begin{enumerate}
\item  Characterize the codes of Grassmannian permutations.
\item
Show that if $w$ is Grassmannian then $F_w$ is a Schur function. 
\item
Which Schur functions are equal to $F_w$ for some Grassmannian permutation $w \in S_n$?
\end{enumerate}
\item 
(321-avoiding permutations \cite{BJS})
A permutation $w \in S_n$ is {\it 321-avoiding} if there does not exist $a < b <c$ such that $w(a) > w(b) > w(c)$.  Show that $w$ is 321-avoiding if and only if no reduced word $\i \in R(w)$ contains a consecutive subsequence of the form $j (j+1) j$.  If $w$ is 321-avoiding, show directly from the definition that $F_w$ is a skew Schur function.
\end{enumerate}

\section{Edelman-Greene insertion}\label{sec:EG}
\subsection{Insertion for reduced words}
We now describe an insertion algorithm for reduced words, due to Edelman and Greene \cite{EG}, which establishes Theorem \ref{thm:stsymm}, and in addition stronger positivity properties.  Related bijections were studied by Lascoux-Sch\"utzenberger \cite{LS} and by Haiman \cite{Hai}.

Let $T$ be a column and row strict Young tableau.  The {\it reading word} $r(T)$ is the word obtained by reading the rows of $T$ from left to right, starting with the bottom row.

Let $w \in S_n$.  We say that a tableau $T$ is a {\it EG-tableau} for $w$ if $r(T)$ is a reduced word for $w$.  For example,
$$
T = \tableau[sY]{1&2&3\\2&3}
$$
has reading word $r(T) = 23123$, and is an EG-tableau for $3421 \in S_4$.

\begin{theorem}[\cite{EG}]\label{thm:EG}
Let $w \in S_n$.  There is a bijection between $R(w)$ and the set of pairs $(P,Q)$, where $P$ is an EG-tableau for $w$, and $Q$ is a standard Young tableau with the same shape as $P$.  Furthermore, under the bijection $\i \leftrightarrow (P(\i),Q(\i))$ we have $\Des(\i) = \Des(Q)$.
\end{theorem}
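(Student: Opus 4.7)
The plan is to construct an explicit insertion algorithm (Edelman--Greene insertion) that realizes the claimed bijection, and then verify the three required properties: (a) the output $P$ is an EG-tableau for $w$, (b) the map $\i \mapsto (P(\i),Q(\i))$ is reversible, and (c) the descents of $\i$ match the descents of the recording tableau $Q$.

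First I would define the row insertion of a letter $a$ into a column-and-row-strict tableau $P$: scan the first row for the smallest entry $b > a$. If no such $b$ exists, append $a$ at the end and terminate. Otherwise, if $b = a{+}1$ and $a$ already appears in the row, leave the row unchanged and recursively insert $b$ into the next row; in all other cases, replace $b$ by $a$ and recursively insert $b$ into the next row. Starting from the empty tableau, insert $i_1, i_2, \ldots, i_\ell$ in order to produce $P(\i)$, and let $Q(\i)$ be the standard tableau that records, in each box, the step at which that box was created. By construction $Q(\i)$ has the same shape as $P(\i)$ and is standard.

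The main step is to prove by induction on $\ell$ that after each insertion the tableau remains column- and row-strict, and that if $r(P)$ was a reduced word for $v$ before inserting $a$, then $r(P')$ is a reduced word for $v s_a$ afterwards (equivalently, $r(P')$ is a reduced word for $w$ after all $\ell$ insertions). The key observation is that each individual bumping step either commutes a letter past others (using $s_i s_j = s_j s_i$ for $|i-j|>1$) or performs a braid move (using $s_i s_{i+1} s_i = s_{i+1} s_i s_{i+1}$), which is exactly the role of the special rule when $b = a{+}1$ is already present. A case analysis on the relative order of $a$, $b$ and the neighboring entries shows the reading word transforms correctly and that row- and column-strictness are preserved. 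This is the main obstacle, as one must check several cases and verify that a braid-type bump in one row interacts correctly with the next.

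For invertibility, I would define reverse row insertion running from the bottom up: reading the position of the largest entry of $Q$ tells us which box of $P$ was created last; reverse-bump it upward using the rule inverse to the above (including the special case for the braid move), and recover $i_\ell$ as the letter ejected from the first row. Iterating recovers $\i$, proving bijectivity. Finally, for the descent claim, I would compare consecutive insertions: if $i_j < i_{j+1}$, the new box at step $j{+}1$ lies strictly to the right of (hence weakly above) the box at step $j$, so $j \notin \Des(Q)$; if $i_j > i_{j+1}$, the new box lies strictly below, so $j \in \Des(Q)$. This can be checked by tracing the bumping paths and using the row-strictness of $P$. Combining (a), (b), (c) yields the theorem, and it also gives Theorem \ref{thm:stsymm} and Schur positivity via Proposition \ref{prop:Schurquasi} applied to Definition \ref{def:stquasi}.
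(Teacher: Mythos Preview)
Your proposal is correct and follows exactly the approach taken in the paper: the paper describes the same Edelman--Greene insertion algorithm (with the identical special rule when both $a$ and $a+1$ are present in a row) and refers to \cite{EG} for the verification of the three properties you outline. In fact you provide more detail than the paper does, since the paper merely states the algorithm and leaves the inductive checks of row/column-strictness, the reduced-word property of $r(P)$, invertibility via reverse bumping, and the descent comparison to the original source.
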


Combining Theorem \ref{thm:EG} with Proposition \ref{prop:Schurquasi} and Definition \ref{def:stquasi}, we obtain:
\begin{corollary}\label{cor:stpos}
Let $w \in S_n$.  Then $F_w = \sum_\lambda \al_{w \la} s_\la$, where 
$\al_{w\la}$ is equal to the number of EG-tableau for $w^{-1}$.  In particular, $F_w$ is Schur positive.
\end{corollary}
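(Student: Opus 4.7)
The plan is simply to combine the three ingredients assembled just above the corollary. Starting from Definition~\ref{def:stquasi}, I write
\[
F_w \;=\; \sum_{\a \in R(w^{-1})} L_{\Des(\a)},
\]
and then apply the Edelman--Greene bijection of Theorem~\ref{thm:EG} to the element $w^{-1}$. That theorem partitions $R(w^{-1})$ into pairs $(P,Q)$, where $P$ ranges over EG-tableaux for $w^{-1}$ and, for each such $P$ of shape $\lambda$, $Q$ ranges over standard Young tableaux of the same shape $\lambda$; moreover the descent set of the word equals the descent set of $Q$.

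The bookkeeping step is then to re-index the sum first by $P$ and then by $Q$. Grouping EG-tableaux $P$ by their common shape $\lambda$, and letting $\alpha_{w\lambda}$ denote the number of EG-tableaux for $w^{-1}$ of shape $\lambda$, I obtain
\[
F_w \;=\; \sum_{P} \sum_{Q \text{ SYT of shape }\mathrm{sh}(P)} L_{\Des(Q)}
\;=\; \sum_{\lambda} \alpha_{w\lambda} \sum_{Q \text{ SYT of shape } \lambda} L_{\Des(Q)}.
\]
By Proposition~\ref{prop:Schurquasi}, the inner sum over $Q$ is exactly $s_\lambda$, giving $F_w = \sum_\lambda \alpha_{w\lambda} s_\lambda$. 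Schur positivity is immediate since each $\alpha_{w\lambda}$ is a cardinality, hence a nonnegative integer.

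There is essentially no obstacle here beyond being careful about which permutation ($w$ versus $w^{-1}$) the EG-tableaux are attached to: Definition~\ref{def:stquasi} introduces $w^{-1}$, so when Theorem~\ref{thm:EG} is invoked it is applied to $w^{-1}$ and the resulting $P$'s are EG-tableaux for $w^{-1}$, matching the statement of the corollary. The real work has already been done in Theorem~\ref{thm:EG} (the descent-preservation of the Edelman--Greene correspondence) and in Proposition~\ref{prop:Schurquasi} (Gessel's expansion of Schur functions in fundamentals); the corollary is just the formal consequence of chaining these two facts through Definition~\ref{def:stquasi}.
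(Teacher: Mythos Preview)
Your proof is correct and follows exactly the approach indicated by the paper, which simply says the corollary is obtained by ``combining Theorem~\ref{thm:EG} with Proposition~\ref{prop:Schurquasi} and Definition~\ref{def:stquasi}.'' You have spelled out this combination in full, including the careful observation that the bijection must be applied to $w^{-1}$ and that the resulting $\alpha_{w\lambda}$ counts EG-tableaux of shape $\lambda$ for $w^{-1}$.
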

As a consequence we obtain Theorem \ref{thm:stsymm}.

\begin{lemma}\label{lem:EGshape}
Suppose $T$ is an EG-tableau for $S_n$.  Then the shape of $T$ is contained in the staircase $\delta_{n-1}$.
\end{lemma}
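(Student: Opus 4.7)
The plan is to combine two straightforward observations: that the entries of any EG-tableau for a permutation in $S_n$ must lie in $\{1,2,\ldots,n-1\}$, and that row- and column-strictness force a linear lower bound on the entry at position $(i,j)$.

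First, I would note that since $r(T)$ is a reduced word for some $w \in S_n$, every letter appearing in $T$ is the index of a simple transposition $s_k$ with $1 \leq k \leq n-1$. Hence every entry of $T$ satisfies $T_{ij} \leq n-1$.

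Next, I would argue by a double induction (or just a direct two-step bound) that $T_{ij} \geq i + j - 1$. Column-strictness together with $T_{1j} \geq 1$ gives $T_{ij} \geq i$ for every $j$; row-strictness then gives $T_{ij} \geq T_{i1} + (j-1) \geq i + j - 1$.

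Combining the two bounds yields $i + j - 1 \leq n - 1$, i.e., $j \leq n - i$, for every cell $(i,j)$ of the shape of $T$. Therefore $\la_i \leq n - i$ for all $i$, so the shape of $T$ is contained in $\delta_{n-1} = (n-1, n-2, \ldots, 1)$. There is no real obstacle here: the reduced-word hypothesis is used only via the bound on the alphabet, and the rest is purely the usual shape estimate for a semistandard (here strictly increasing in both directions) tableau with bounded entries.
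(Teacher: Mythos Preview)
Your proof is correct and follows exactly the paper's approach: both argue that row- and column-strictness force $T_{ij} \geq i+j-1$, while the alphabet constraint from $S_n$ gives $T_{ij} \leq n-1$, hence the shape lies inside $\delta_{n-1}$. The paper's version is simply a more compressed statement of the same two observations.
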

\begin{proof}
Since $T$ is row-strict and column-strict, the entry in the $i$-th row and $j$-th column is greater than or equal to $i+j-1$.  But EG-tableaux can only be filled with the numbers $1,2,\ldots,n-1$, so the shape of $T$ is contained inside $\delta_{n-1}$.
\end{proof}

\begin{proof}[Proof of Theorem \ref{thm:longestword}]
The longest word $w_0$ has length $\binom{n}{2}$.  Suppose $T$ is an EG-tableau for $w_0$.  Since the staircase $\delta_{n-1}$ has exactly $\binom{n}{2}$ boxes, Lemma \ref{lem:EGshape} shows that $T$ must have shape $\delta_{n-1}$.  But then it is easy to see that the only possibility for $T$ is the tableau
$$
\tableau[mbY]{1 & 2 & 3& \cdots & n-1\\2 & 3 & \cdots & n-1\\ 3&\cdots&n-1 \\ \vdots &\vdots \\ n-1}
$$
Thus it follows from Theorem \ref{thm:EG} that $R(w_0) = f^{\delta_{n-1}}$.
\end{proof}

The proof of Theorem \ref{thm:EG} is via an explicit insertion algorithm.  Suppose $T$ is an EG-tableau.  We describe the insertion of a letter $a$ into $T$.  If the largest letter in the first row of $T$ is less than $a$, then we add $a$ to the end of the first row, and the insertion is complete.  Otherwise, we find the smallest letter $a'$ in $T$ greater than $a$, and bump $a'$ to the second row, where the insertion algorithm is recursively performed.  The first row $R$ of $T$ changes as follows: if both $a$ and $a+1$ were present in $R$ (and thus $a' = a+1$) then the row remains unchanged; otherwise, we replace $a'$ by $a$ in $R$.

For a reduced word $\i = i_1\,i_2\, \cdots \, i_\ell$, we obtain $P(\i)$ by inserting $i_1$, then $i_2$, and so on, into the empty tableau.  The tableau $Q(\i)$ is the standard Young tableau which records the changes in shape of the EG-tableau as this insertion is performed.  

\begin{example}
Let $\i = 21232$.  Then the successive EG-tableau are
$$
\tableau[sY]{2} \qquad \tableau[sY]{1\\2} \qquad \tableau[sY]{1&2\\2} \qquad \tableau[sY]{1&2&3 \\ 2} \qquad \tableau[sY]{1&2&3 \\ 2&3}
$$
so that 
$$
Q(\i) = \tableau[sY]{1&3&4\\2&5}.
$$
\end{example}

\subsection{Coxeter-Knuth relations}
\label{sec:CK}
Let $\i$ be a reduced word.  A {\it Coxeter-Knuth relation} on $\i$ is one of the following transformations on three consecutive letters of $\i$:
\begin{enumerate}
\item
$a\, (a+1) \,a \sim (a+1)\, a \, (a+1)$
\item
$a \, b \, c \sim a \, c \, b$ when $b < a < c$
\item
$a \, b \, c \sim b \, a \, c$ when $b < c < a$
\end{enumerate}
Since Coxeter-Knuth relations are in particular Coxeter relations for the symmetric group, it follows that if two words are related by Coxeter-Knuth relations then they represent the same permutation in $S_n$.  The following result of Edelman and Greene states that Coxeter-Knuth equivalence is an analogue of Knuth-equivalence for reduced words.

\begin{theorem}[\cite{EG}]\label{thm:CK}
Suppose $\i,\i' \in R(w)$.  Then $P(\i)=P(\i')$ if and only if $\i$ and $\i'$ are Coxeter-Knuth equivalent.
\end{theorem}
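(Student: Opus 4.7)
The plan is to prove both implications by reducing to local verifications about EG-insertion, following the template of the classical proof of Knuth's theorem for RSK.

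For the forward direction (CK-equivalent $\Rightarrow P(\i) = P(\i')$), I would argue by a direct case analysis. Any CK move modifies only three consecutive letters of $\i$, so it suffices to show that if the partial insertion of the prefix produces some EG-tableau $T$, then inserting the three letters $xyz$ and $x'y'z'$ into $T$ yields the same resulting EG-tableau, where $xyz \sim x'y'z'$ is one of the three CK moves. Because EG-row-insertion is determined by the interaction of the inserted letter with a single row plus the ``swap'' rule (no change to the row when both $a$ and $a+1$ are already present, otherwise replace the smallest entry greater than $a$), the analysis reduces to a finite check of what lands in row $1$ and what gets bumped to row $2$. Moreover, in each case the two (or three) letters bumped into row $2$ are themselves related by a CK move (possibly trivial), so the argument then propagates inductively down to later rows.

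For the converse, I plan to use the standard canonical-representative trick: show that every reduced word $\i$ is CK-equivalent to the reading word $r(P(\i))$ of its insertion tableau. Given this, if $P(\i) = P(\i') = P$, then both $\i$ and $\i'$ are CK-equivalent to $r(P)$, hence to each other. To prove $\i \sim r(P(\i))$ I would induct on the length. Write $\i = \j \cdot a$, and by induction assume $\j \sim r(P(\j))$, so that $\i \sim r(P(\j)) \cdot a$. It then remains to show that appending $a$ to the reading word of $P(\j)$ can be transformed by CK moves into $r(P(\j \cdot a))$. This is a local check that mirrors the bumping path: using CK moves of types (2) and (3), the appended letter $a$ can be slid leftward past the entries of the bottom row until it reaches the position where it would be inserted; the exchange with the letter it bumps corresponds to a type (1) move; and the bumped letter then continues up through the next rows by the same mechanism.

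The main obstacle is this inductive step in the converse: one must show that each step of the EG-bumping algorithm (especially the subtle rule under which the row is \emph{unchanged} when both $a$ and $a+1$ are present, which corresponds precisely to the braid relation $a(a+1)a = (a+1)a(a+1)$) can be simulated by CK moves acting on the reading-word representation. I expect this to reduce to a short finite catalogue of three-letter configurations straddling a row boundary, and the matching between bumping cases and CK-move types should be the analogue of the bumping lemma used in the RSK proof of Knuth's theorem. Once that dictionary is established, both directions follow by straightforward induction on the length of $\i$ and on the number of rows of $P(\i)$.
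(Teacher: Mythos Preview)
The paper does not supply its own proof of this theorem; it is stated with attribution to \cite{EG} and no argument is given.  (The material in the appendix, which in any event sits after \verb|\end{document}| and so is not part of the compiled paper, proves a related single-move statement about dual equivalence and itself defers to \cite{EG} for the fact that $P$ is preserved under a Coxeter--Knuth move.)  So there is nothing in the paper to compare your proposal against; your outline is essentially the classical Edelman--Greene argument and is sound in its overall architecture: (i) a row-by-row propagation lemma showing that a single CK move on the input induces a CK move (or identity) on the bumped letters, and (ii) the canonical-representative reduction $\i \sim r(P(\i))$.

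One point to correct in your converse sketch.  You write that after sliding $a$ leftward through the row with moves of types (2) and (3), ``the exchange with the letter it bumps corresponds to a type (1) move.''  This is not right in general.  The braid move $a(a{+}1)a \sim (a{+}1)a(a{+}1)$ is invoked precisely and only in the exceptional EG case where both $a$ and $a{+}1$ already lie in the row (so the row is unchanged and $a{+}1$ is bumped).  In the generic case $a' > a{+}1$, no type (1) move occurs at all: one slides $a$ left past the entries $b_m,\dots,b_{j+1}$ to the right of $a'$ using type (2) (each triple $b_{k-1}\,b_k\,a$ with $a<b_{k-1}<b_k$ goes to $b_{k-1}\,a\,b_k$), and then, once one reaches $\dots b_{j-1}\,a'\,a\,b_{j+1}\dots$, one slides $a'$ left past $b_{j-1},\dots,b_1$ using type (3) (each triple $b_{k}\,a'\,b_{k+1}$ with $b_k<b_{k+1}<a'$ goes to $a'\,b_k\,b_{k+1}$).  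For example, with row $1\,3\,5$ and $a=2$ one has $1\,3\,5\,2 \sim 1\,3\,2\,5 \sim 3\,1\,2\,5$, using one type (2) move followed by one type (3) move, with no type (1) move.  Your identification of this local lemma as ``the main obstacle'' is exactly right; once the correct dichotomy (type (1) in the $a,a{+}1$ case, types (2)/(3) otherwise) is written down, the induction you describe goes through.
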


\subsection{Exercises and Problems}\label{ssec:EGprob}
\begin{enumerate}
\item
For $w \in S_n$ let $1 \times w \in S_{n+1}$ denote the permutation obtained from $w$ by adding $1$ to every letter in the one-line notation, and putting a $1$ in front.  Thus if $w = 24135$, we have $1 \times w = 135246$.  Show that $F_w = F_{1 \times w}$.
\item
Suppose $w \in S_n$ is 321-avoiding (see Section \ref{ssec:combinprob}).  Show that Edelman-Greene insertion of $\i \in R(w)$ is the usual Robinson-Schensted insertion of $\i$.
\item (Vexillary permutations \cite{BJS})
A permutation $w \in S_n$ is {\it vexillary} if it avoids the pattern 2143.  That is, there do not exist $a < b < c < d$ such that $w(b) < w(a) < w(d) < w(c)$.  In particular, $w_0$ is vexillary.  

The Stanley symmetric function $\tF_w$ is equal to a Schur function $s_\la$ if and only if $w$ is vexillary \cite[p.367]{BJS}.  Is there a direct proof using Edelman-Greene insertion? 

\item (Shape of a reduced word)
The shape $\la(\i)$ of a reduced word $\i \in R(w)$ is the shape of the tableau $P(\i)$ or $Q(\i)$ under Edelman-Greene insertion.  Is there a direct way to read off the shape of a reduced word?  (See \cite{TY1} for a description of $\la_1(\i)$.)

For example, Greene's invariants (see for example \cite[Ch. 7]{EC2}) describe the shape of a word under Robinson-Schensted insertion.

\item (Coxeter-Knuth relations and dual equivalence (graphs))
%\begin{remark}
\label{rem:CK}
Show that Coxeter-Knuth relations on reduced words correspond exactly to elementary dual equivalences on the recording tableau (see \cite{Hai}).  They thus give a structure of a dual equivalence graph \cite{Ass} on $R(w)$. 

An independent proof  of this (in particular not using EG-insertion), together with the technology of \cite{Ass}, would give a new proof of the Schur positivity of Stanley symmetric functions.

\item (Lascoux-Sch\"{u}tzenberger transition)
Let $(i,j) \in S_n$ denote the transposition which swaps $i$ and $j$.  Fix $r \in [n]$ and $w \in S_n$.  The Stanley symmetric functions satisfy \cite{LS} the equality
\begin{equation}\label{eq:transition}
\sum_{u=w\,(r,s): \,\,
\ell(u)=\ell(w)+1 \,\, r < s} F_u = 
\left(\sum_{v=w\,(s',r): \,\,
\ell(v)=\ell(w)+1 \,\, s' < r} F_v\right)\ \ (+F_x)
\end{equation}
where the last term with $x = (1 \times w)(1,r)$ is only present if $\ell(x) = \ell(w)+1$.

One obtains another proof of the Schur positivity of $F_u$ as follows.  Let $r$ be the last descent of $u$, and let $k$ be the largest index such that $u(r) > u(k)$.  Set $w = u(r,k)$.  Then the left hand side of \eqref{eq:transition} has only one term $F_u$.  Recursively repeating this procedure for the terms $F_v$ on the right hand side one obtains a positive expression for $F_u$ in terms of Schur functions.

\item (Little's bijection)
Little \cite{Lit} described an algorithm to  establish \eqref{eq:transition}, which we formulate in the manner of \cite{LS:affineLittle}.  A {\it $v$-marked nearly reduced word} is a pair $(\i,a)$ where $\i = i_1 i_2 \cdots i_\ell$ is a word with letters in $\Z_{>0}$ and $a$ is an index such that $\j = i_1 i_2 \cdots \hat i_a \cdots i_\ell$ is a reduced word for $v$, where $\hat i_a$ denotes omission.  We say that $(\i,a)$ is a marked nearly reduced word if it is a $v$-marked nearly reduced word for some $v$.  A marked nearly reduced word is a marked reduced word if $\i$ is reduced.

Define the directed {\it Little graph} on marked nearly reduced words, where each vertex has a unique outgoing edge $(\i,a) \to (\i',a')$ as follows: $\i'$ is obtained from $\i$ by changing $i_a$ to $i_a-1$.  If $i_a-1 = 0$, then we also increase every letter in $\i$ by $1$.  If $\i$ is reduced then $a'=a$.  If $\i$ is not reduced then $a'$ is the unique index not equal to $a$ such that $i_1 i_2 \cdots \hat i_{a'} \cdots i_\ell$ is reduced.  (Check that this is well-defined.)

For a marked reduced word $(\i,a)$ such that $\i$ is reduced, the {\it forward Little move} sends $(\i,a)$ to $(\j,b)$ where $(\j,b)$ is the first marked reduced word encountered by traversing the Little graph.  
\begin{example} \label{ex:Little} Beginning with $\i = 2134323$ and $a = 5$ one has
$$
2134{\bf 3}21 \to 21342{\bf 2} 1 \to 213421{\bf 1} \to 324532{\bf 1}.
$$
Note that $\i$ is a reduced word for $u = 53142$ which covers $w = 43152$.  The word $3245321$ is a reduced word for $514263 = (1 \times w)(1,2)$.  \end{example}
Check that if you apply the forward Little move to a $w$-marked reduced word $(\i,a)$ where $\i \in R(u)$ for some $u$ on the left hand side of \eqref{eq:transition}, you will get a ($w$ or $1\times w$)-marked reduced word $(\j,b)$ where $\j \in R(v)$ for some $v$ on the right hand side of \eqref{eq:transition}.  This can then be used to prove \eqref{eq:transition}.

\item (Dual Edelman-Greene equivalence)
Let $R(\infty)$ denote the set of all reduced words of permutations.  We say that $\i,\i' \in R(\infty)$ are dual EG-equivalent if the recording tableaux under EG-insertion are the same: $Q(\i) = Q(\i')$.

\begin{conjecture}
Two reduced words are dual $EG$-equivalent if and only if they are connected by forward and backwards Little moves.
\end{conjecture}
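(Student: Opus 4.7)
The plan is to attack the two directions separately, with the ``only if'' direction being the more tractable one.

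For the easy direction, I would verify that a single forward Little move preserves the recording tableau $Q$ under Edelman--Greene insertion. The strategy is to track EG-insertion along the entire trajectory $(\i_0,a_0)\to(\i_1,a_1)\to\cdots\to(\j,b)$ in the Little graph. At each intermediate step the word differs from its predecessor by decrementing a single letter (possibly with a global shift of $+1$ applied to all letters when a $0$ would be created), and I would show that such a decrement alters the $P$-tableau in a controlled way (essentially a single elementary row-bumping exchange) while leaving the sequence of shapes — and hence $Q$ — unchanged. The behavior on the non-reduced intermediates, where EG-insertion acquires a single ``phantom'' cell that is later reabsorbed, should match the analysis in \cite{LS:affineLittle}; Example \ref{ex:Little} together with Theorem \ref{thm:CK} provide useful sanity checks.

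For the hard direction, I would argue by induction on $\ell(w)$ using the Lascoux--Sch\"utzenberger transition \eqref{eq:transition}. Given $\i,\i' \in R(w)$ with $Q(\i)=Q(\i')$, I locate the last descent $r$ of $w$ as in the exercise and use the interpretation of \eqref{eq:transition} via Little's algorithm to obtain canonical forward-Little-move maps from $R(u)$ (for each $u$ on the left of \eqref{eq:transition}) to $R(v)$ or $R((1\times w)(1,r))$ on the right. Pushing $\i$ and $\i'$ through these maps, I would show that the pushed words remain dual-EG equivalent (by the easy direction together with the observation that the map strictly decreases a suitable statistic, so induction applies), and then lift the connecting Little-chain back to a chain between $\i$ and $\i'$.

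The main obstacle is the reverse direction, specifically reconciling Little moves with Haiman's elementary dual equivalences on the recording tableau. One must ensure that every elementary swap of consecutive entries in $Q(\i)$ can be realized by some sequence of forward and backward Little moves, possibly passing through many intermediate permutations $w'\neq w$, and that no extra identifications are inadvertently forced. A potentially cleaner alternative is to bypass dual equivalences entirely by a counting argument: using Corollary \ref{cor:stpos} and Theorem \ref{thm:EG} one knows that the total number of $Q$-classes inside $R(w)$ equals $\sum_\lambda \alpha_{w^{-1}\lambda}$, so it would suffice to exhibit a single representative from each EG-tableau shape and prove that every $\i \in R(w)$ is Little-connected to one such representative; this can perhaps be set up by always running the forward Little move at the rightmost eligible index and showing the resulting normal form depends only on $Q(\i)$.
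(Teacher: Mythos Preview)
This statement is presented in the paper as an open \emph{conjecture}, not as a theorem; the paper offers no proof, only the supporting example that $2134321$ and $3245321$ have the same recording tableau. There is therefore nothing to compare your attempt against.

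What you have written is not a proof but a research outline, and you acknowledge as much (``The main obstacle is the reverse direction'', ``I would show'', ``should match'', ``can perhaps be set up''). Every substantive step is deferred. In particular:

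\begin{itemize}
\item For the forward direction, the claim that a single decrement in the Little graph leaves $Q$ unchanged is itself the heart of the matter and is not obvious; your sketch (``alters the $P$-tableau in a controlled way \ldots\ while leaving the sequence of shapes unchanged'') is an assertion, not an argument. At the intermediate non-reduced words EG-insertion is not even defined in the paper, so invoking it there requires real work.
\item For the reverse direction, your transition-based induction has a structural problem: applying a forward Little move to $\i$ and to $\i'$ generally lands in $R(v)$ and $R(v')$ for \emph{different} $v,v'$ on the right side of \eqref{eq:transition}, and your inductive hypothesis says nothing about connecting words that live in reduced-word sets of distinct permutations. The phrase ``lift the connecting Little-chain back'' hides exactly the difficulty.
\item Your alternative counting argument is miscounted. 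Dual EG-equivalence is defined on $R(\infty)$, so classes cut across different permutations, and even within a fixed $R(w)$ the number of $Q$-classes is $\sum_{\lambda:\alpha_{w\lambda}>0} f^\lambda$, not $\sum_\lambda \alpha_{w^{-1}\lambda}$; moreover ``one representative per shape'' is not enough, since there are $f^\lambda$ distinct $Q$-classes of each shape $\lambda$.
\end{itemize}

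In short: the paper does not prove this, and neither does your proposal. (For context, this conjecture was subsequently proved by Hamaker and Young, and the forward direction alone---that Little moves preserve $Q$---already required a substantial argument.)
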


For example, both $2134321$ and $3245321$ of Example \ref{ex:Little} Edelman-Greene insert to give recording tableau $$\tableau[sbY]{1&3&4\\2&5\\6\\7}$$

\item
\label{prob:enumerateEG}
Fix a symmetric group $S_n$.  Is there a formula for the number of EG-tableau of a fixed shape $\la$?  (See also Section \ref{ssec:algebraprob} and compare with formulae for the number of (semi)standard tableaux \cite{EC2}.)

\item There are two common bijections which demonstrate the symmetry of Schur functions: the Bender-Knuth involution \cite{BK}, and the Lascoux-Sch\"{u}tzenberger/crystal operators (see for example \cite{LLT}).

Combine this with Edelman-Greene insertion to obtain an explicit weight-changing bijection on the monomials of a Stanley symmetric function, which exhibits the symmetry of a Stanley symmetric function.  Compare with Stanley's original bijection \cite{Sta}.

\item (Jeu-de-taquin for reduced words)
There is a theory of Jeu-de-taquin for skew EG-tableaux due to Thomas and Yong \cite{TY,TY1}, where for example one possible slide is
$$
\tableau[mbY]{\bl & i \\ i & i+1} \qquad \longleftrightarrow \qquad \tableau[mbY]{i& i+1 \\ i+1}
$$
\end{enumerate}

\section{Affine Stanley symmetric functions}
\label{sec:affine}
\subsection{Affine symmetric group}
For basic facts concerning the affine symmetric group, we refer the reader to
\cite{BB}.

Let $n > 2$ be a positive integer.  Let $\tS_n$ denote the affine symmetric group with simple
generators $s_0,s_1,\ldots,s_{n-1}$ satisfying the relations
\begin{align*}
s_i^2 &= 1 &\mbox{for all $i$} \\
s_i s_{i+1} s_i &= s_{i+1} s_i s_{i+1} &  \mbox{for all $i$} \\
  s_is_j &= s_j s_i &\mbox{for $|i-j| \geq 2$}.
\end{align*}
Here and elsewhere, the indices will be taken modulo $n$ without
further mention.  The length $\ell(w)$ and reduced words $R(w)$ for affine permutations $w \in \tS_n$ are defined in an analogous manner to Section \ref{sec:perm}.  The symmetric group $S_n$ embeds in $\tS_n$ as the subgroup
generated by $s_1,s_2, \ldots, s_{n-1}$.  

One may realize $\tS_n$ as the set of all
bijections $w:\Z\rightarrow\Z$ such that $w(i+n)=w(i)+n$ for all
$i$ and $\sum_{i=1}^n w(i) = \sum_{i=1}^n i$. In this realization,
to specify an element $w \in \tS_n$ it suffices to give the
``window" $[w(1),w(2),\dotsc,w(n)]$.  The product $w\cdot v$ of
two affine permutations is then the composed bijection $w \circ v:
\Z \rightarrow \Z$.  Thus $ws_i$ is obtained from $w$ by swapping
the values of $w(i+kn)$ and $w(i+kn+1)$ for every $k \in \Z$.  An affine permutation $w \in \tS_n$ is Grassmannian if $w(1) < w(2) < \cdots < w(n)$.  For example, the affine Grassmannian permutation $[-2,2,6] \in \tS_3$ has reduced words $2120$ and $1210$.

\subsection{Definition}
A word $a_1a_2 \cdots a_\ell$ with letters in $\Z/n\Z$ is called {\it cyclically decreasing} if (1) each letter occurs at most once, and (2) whenever $i$ and $i+1$ both occur in the word, $i+1$ precedes $i$.

An affine permutation $w \in \tS_n$ is called cyclically decreasing if it has a cyclically decreasing reduced word.  Note that such a reduced word may not be unique.  

\begin{lemma}
There is a bijection between strict subsets of $\Z/n\Z$ and cyclically decreasing affine permutations $w \in \tS_n$, sending a subset $S$ to the unique cyclically decreasing affine permutation which has reduced word using exactly the simple generators $\{s_i \mid i \in S\}$.
\end{lemma}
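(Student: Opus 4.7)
The plan is to construct the map $S \mapsto w_S$ explicitly and verify that it is a bijection by reducing to a finite parabolic subgroup of $\tS_n$. Given a strict subset $S \subsetneq \Z/n\Z$, fix any $k \notin S$ and enumerate the elements of $S$ in the cyclic order $k-1, k-2, k-3, \ldots$ (mod $n$), keeping only those that lie in $S$; call the resulting sequence $i_1, \ldots, i_m$ with $m=|S|$. Define $w_S := s_{i_1}s_{i_2}\cdots s_{i_m}$. By construction this word is cyclically decreasing and its support is exactly $\{s_i : i \in S\}$.

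Next I would show that this word is reduced, so that $\ell(w_S) = |S|$. The key observation is that, because $s_k$ is excluded, the generators $\{s_i : i \in S\}$ all lie in the parabolic subgroup $W_S \subseteq \tS_n$ whose Coxeter diagram is obtained by cutting the affine cycle $\tilde A_{n-1}$ at the node $k$ and then restricting to $S$. This diagram is a disjoint union of type-$A$ paths, one for each maximal run (``arc'') of consecutive indices of $S$ around the cycle based at $k$, so $W_S$ is a product of finite symmetric groups. Within a single arc $\{a, a+1, \ldots, a+\ell\}$ the contribution to our word is the consecutive-generator decreasing factor $s_{a+\ell}s_{a+\ell-1}\cdots s_a$, which is a standard reduced expression in the corresponding type-$A$ factor; since generators from distinct arcs commute, the full expression is reduced.

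Then I would verify well-definedness of $w_S$ and the uniqueness asserted in the lemma simultaneously. Any cyclically decreasing reduced word with support $S$ must list the letters of each arc in strictly decreasing order (this is forced directly by the cyclic-decrease condition on pairs $(i+1,i)$ with both in $S$), while generators from different arcs involve indices differing by at least $2$ modulo $n$ and so commute. Hence any two cyclically decreasing reduced words with support $S$ are related by a sequence of commutation moves only, and define the same element of $\tS_n$; in particular $w_S$ does not depend on the choice of $k$, and it is the unique cyclically decreasing permutation with the prescribed support.

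Finally, injectivity of $S\mapsto w_S$ follows from the standard Coxeter-theoretic fact that the support of an element (the set of simple generators appearing in some, equivalently every, reduced expression) is an invariant of the element, so $S$ is recovered from $w_S$. Surjectivity is immediate: every cyclically decreasing $w$ admits, by definition, a cyclically decreasing reduced word with some support $S$, and $S$ must be strict, since $S = \Z/n\Z$ would require $i+1$ to precede $i$ in the word for every $i \in \Z/n\Z$, which is impossible on a finite linearly ordered word. The main obstacle is the reducedness step in the affine group, and this is precisely what the parabolic reduction to a product of finite symmetric groups is designed to circumvent.
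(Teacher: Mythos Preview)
The paper states this lemma without proof, so there is no approach to compare against; your argument is correct and is the natural one. The reduction to the standard parabolic $W_{\aI\setminus\{k\}}\cong S_n$ is exactly the right way to see reducedness, and your observation that distinct arcs are separated by at least one missing index (hence their generators pairwise commute in $\tS_n$) cleanly handles both well-definedness of $w_S$ and the uniqueness claim. One cosmetic point: in the uniqueness paragraph you may drop the word ``reduced'' when you say ``any cyclically decreasing reduced word with support $S$'', since the commutation argument applies to any cyclically decreasing word with that support, and reducedness then follows a posteriori from the fact that all such words represent the same element $w_S$ of length $|S|$.
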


We define cyclically decreasing factorizations of $w \in \tS_n$ in the same way as decreasing factorizations in $S_n$.

\begin{definition}%[\cite{Lam:affStan}]
Let $w \in \tS_n$.  The affine Stanley symmetric function $\tF_w$ is given by
$$
\tF_w = \sum_{w = v_1v_2\cdots v_r} x_1^{\ell(v_1)} x_2^{\ell(v_2)} \cdots x_r^{\ell(v_r)}
$$
where the summation is over cyclically decreasing factorizations of $w$.
\end{definition}

\begin{theorem}[\cite{Lam:affStan}]\label{thm:afstsymm}
The generating function $\tF_w$ is a symmetric function.
\end{theorem}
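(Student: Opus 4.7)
The plan is to mimic the proof that $F_w$ is symmetric: show that $\tilde F_w$ is invariant under each adjacent transposition $x_i \leftrightarrow x_{i+1}$ of variables. Since the coefficient of a monomial $x_1^{a_1}\cdots x_r^{a_r}$ in $\tilde F_w$ depends only on cyclically decreasing factorizations $w = v_1\cdots v_r$ with $\ell(v_j)=a_j$, and such factorizations split locally (factorize $v_iv_{i+1}$ independently of the other factors), the desired invariance follows as soon as we know that for every $w \in \tilde S_n$ the two-variable generating function
\[
\sum_{\substack{w = uv\\ u,v \text{ cyc.\ decreasing}\\ \ell(u)+\ell(v)=\ell(w)}} x^{\ell(u)}\, y^{\ell(v)}
\]
is symmetric in $x$ and $y$. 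Equivalently, for every pair $(a,b)$ the number of length-additive cyclically decreasing two-step factorizations $w=uv$ with $(\ell(u),\ell(v))=(a,b)$ should equal the number with $(\ell(u),\ell(v))=(b,a)$.

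To package this cleanly, I would pass to the affine nilCoxeter algebra $\tilde{\mathbb A}_n$ (generators $u_0,\dots,u_{n-1}$ with $u_i^2=0$ and the affine braid relations). By the bijection in the lemma preceding the theorem, each strict subset $A \subsetneq \Z/n\Z$ determines a unique cyclically decreasing $v_A \in \tilde S_n$, and we let $u_A \in \tilde{\mathbb A}_n$ be the corresponding product of generators. Define, for $0 \le k \le n-1$,
\[
h_k \;=\; \sum_{\substack{A \subsetneq \Z/n\Z\\ |A|=k}} u_A.
\]
A straightforward unpacking of the definitions identifies the coefficient of $u_w$ in $h_{a_1}h_{a_2}\cdots h_{a_r}$ with the number of cyclically decreasing factorizations of $w$ with factor lengths $(a_1,\dots,a_r)$. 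Consequently
\[
\tilde F_w(x_1,x_2,\ldots) \;=\; \bigl\langle u_w,\ \textstyle\prod_i \bigl(\sum_k h_k\, x_i^k\bigr)\bigr\rangle,
\]
where the infinite product is ordered (say) in increasing $i$ and $\langle u_w,\cdot\rangle$ extracts the $u_w$-coefficient. Symmetry of $\tilde F_w$ in the $x$ variables is therefore equivalent to the commutativity relations
\[
h_k\, h_\ell \;=\; h_\ell\, h_k \qquad \text{for all } 0 \le k,\ell \le n-1,
\]
which give the desired bijection on two-factor cyclically decreasing factorizations term by term.

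The main obstacle is precisely this commutativity, which has no counterpart as simple as the finite-type case because in affine type two cyclically decreasing elements can interact in several non-trivial ways through the cyclic braid relations. I would prove it by a direct combinatorial argument: given a pair $(A,B)$ with $|A|=k$, $|B|=\ell$ and a length-additive product $u_A u_B = u_w$, I would construct a pair $(A',B')$ with $|A'|=\ell$, $|B'|=k$ and $u_{A'}u_{B'}=u_w$, involutively. The construction proceeds by inserting the generators of $u_B$ one at a time on the right into a cyclically decreasing word for $u_A$ and tracking how each insertion either (i) appends a new index to the cyclically decreasing word, (ii) commutes past using $u_iu_j = u_j u_i$ for non-adjacent $i,j$, or (iii) triggers a braid move $u_i u_{i+1} u_i = u_{i+1}u_i u_{i+1}$ that rearranges a local three-letter piece; length additivity rules out the only other possibility, a square $u_i u_i = 0$. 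A case analysis based on where the newly inserted generator lands relative to the support of the current cyclically decreasing word shows that the resulting reduced word again splits into two cyclically decreasing pieces, now of sizes $\ell$ and $k$, and that the procedure is an involution. All other steps in the plan are either formal (locality reduction, generating-function bookkeeping) or already codified in the preceding lemma; the cyclic braid analysis establishing $h_k h_\ell = h_\ell h_k$ is the single genuinely new input, and it is the step I expect to occupy most of the work.
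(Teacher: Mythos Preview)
Your reduction is exactly the one the paper makes: pass to the affine nilCoxeter algebra, define $\tilde\h_k=\sum_{|A|=k} A_{v_A}$ over cyclically decreasing elements, express $\tilde F_w$ as the coefficient of $A_w$ in $\prod_i(\sum_k \tilde\h_k x_i^k)$, and deduce symmetry from $\tilde\h_k\tilde\h_\ell=\tilde\h_\ell\tilde\h_k$ (this is the paper's Theorem~\ref{thm:ahcommute}). Where you diverge is in how you establish commutativity.

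The paper does \emph{not} prove $\tilde\h_k\tilde\h_\ell=\tilde\h_\ell\tilde\h_k$ by a bijection on two-factor cyclically decreasing factorizations. Instead it builds the Peterson centralizer $\Pet=Z_{\afA}(S)$ inside the affine nilHecke ring, shows $\Pet$ is commutative (Lemma~\ref{lem:Petcomm}: only translation elements survive, and these commute), and then proves that $\tilde\h_k$ lies in the image $\aB=\phi_0(\Pet)$ by checking the criterion of Proposition~\ref{prop:affStan}, namely $\phi_0(\tilde\h_k\, x_i)=0$ for each $i$. That last check is a short cancellation argument on subsets of $\Z/n\Z$ (proof of Theorem~\ref{thm:aB}), with no braid-move case analysis at all. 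Commutativity then comes for free from the commutativity of $\aB$.

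Your bijective route is the one the paper explicitly attributes to \cite{Lam:affStan} (``Theorem~\ref{thm:afstsymm} can be proved directly, as was done in \cite{Lam:affStan}''), so it is a legitimate alternative. Its advantage is that it is self-contained and avoids the nilHecke/Peterson machinery; its cost is that the insertion-and-braid case analysis you sketch is genuinely delicate in affine type---inserting generators of $u_B$ ``one at a time'' does not straightforwardly preserve the cyclically decreasing structure, and the actual involution in \cite{Lam:affStan} is more global than your sketch suggests. The paper's approach, by contrast, trades that combinatorics for a single short computation, and has the further payoff that it works uniformly in other types and ties $\tilde\h_k$ to the $j$-basis and to the geometry of the affine Grassmannian.
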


Theorem \ref{thm:afstsymm} can be proved directly, as was done in \cite{Lam:affStan}.  We shall establish Theorem \ref{thm:afstsymm} using the technology of the affine nilHecke algebra in Sections \ref{sec:affinenilHecke}-\ref{sec:affineFS}.  Some immediate observations:
\begin{enumerate}
\item
$\tF_w$ is a homogeneous of degree $\ell(w)$.
\item
If $w \in S_n$, then a cyclically decreasing factorization of $w$ is just a decreasing factorization of $w$, so $\tF_w = F_w$.
\item
The coefficient of $x_1 x_2 \cdots x_{\ell(w)}$ in $\tF_w$ is equal to $|R(w)|$.
\end{enumerate}

\begin{example}\label{ex:21202}
Consider the affine permutation $w = s_2 s_1 s_2 s_0 s_2$.  The reduced words are $R(w) = \{21202, 12102, 21020\}$.  The other cyclically decreasing factorizations are $$\os{21}\os{2}\os{0}\overline{2}, \os{2}\os{1}\os{2}\overline{02},
\os{1}\os{21}\os{0}\overline{2}, \os{1}\os{2}\os{1}\overline{02},\os{1}\os{2}\os{10}\overline{2},\os{21}\os{0}\os{2}\overline{0},\os{2}\os{1}\os{02}\overline{0},\os{2}\os{10}\os{2}\overline{0}$$
$$\os{21}\os{2}\overline{02}, \os{1}\os{21}\overline{02},\os{21}\os{02}\overline{0}$$
Thus
$$
\tF_w = m_{221}+2m_{2111}+3m_{11111}.
$$
\end{example}
\subsection{Codes}
Let $w \in \tS_n$.  The {\it code} $c(w)$ is a vector $c(w) =
(c_1,c_2,\ldots,c_{n}) \in \Z_{\geq 0}^n - \Z_{>0}^n$ of non-negative entries
with at least one 0.  The entries are given by $c_i = \#\{j \in
\Z \, \mid \, j > i \,\, \text{and} \,\, w(j) < w(i)\}$.

It is shown in \cite{BB} that there is a bijection between codes and
affine permutations and that $\ell(w) = |c(w)| := \sum_{i=1}^n c_i$.  We define $\la(w)$ as for usual permutations (see Section \ref{sec:combin}).  For example, for $w = s_2s_0s_1s_2s_1s_0 = [ -4,3,7] \in \tS_3$, one has $c(w^{-1})=(5,1,0)$ and $\la=(2,1,1,1,1)$.

Let $\Bo^n$ denote the set of partitions $\la$ satisfying $\la_1 < n$, called the set of $(n-1)$-bounded partitions.

\begin{lemma}[{\cite{BB}}]
The map $w \mapsto \la(w)$ is a bijection between $\tS_n^0$ and $\Bo^n$.
\end{lemma}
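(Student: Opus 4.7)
The plan is to establish the bijection by explicitly constructing an inverse, using the code-permutation bijection cited from \cite{BB} just above.

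First I would observe that $\lambda(w) \in \mathcal{B}^n$ for every $w \in \tilde S_n$, not only Grassmannian ones. Since $c(w^{-1})$ is a length-$n$ sequence of nonnegative integers with at least one zero, its rearrangement in decreasing order is a partition of length at most $n-1$, whose conjugate $\lambda(w)$ has largest part at most $n-1$. So the map $w \mapsto \lambda(w)$ is automatically well-defined into $\mathcal{B}^n$; the content of the lemma is that its restriction to $\tilde S_n^0$ is a bijection onto $\mathcal{B}^n$.

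For injectivity, I would analyze $c(w^{-1})$ for Grassmannian $w$ using the identity
$$ c_i(w^{-1}) = \#\{m < w^{-1}(i) : w(m) > i\}, $$
obtained by substituting $m = w^{-1}(j)$ into the definition. The Grassmannian condition $w(1) < w(2) < \cdots < w(n)$ forces the positions $w^{-1}(1),\ldots,w^{-1}(n) \in \mathbb{Z}$ to be completely determined by which residue classes mod $n$ the window values $w(i)$ occupy; consequently, both the multiset of entries of $c(w^{-1})$ and their placement in the specific positions $i \in [n]$ can be read off from the window of $w$. This rigidity shows that $\lambda(w)$ together with the Grassmannian constraint uniquely recovers $w$.

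For surjectivity, I would construct $w_\lambda$ from $\lambda \in \mathcal{B}^n$ explicitly: the conjugate $\mu = \lambda'$ has length at most $n-1$, and I would place its parts in a length-$n$ code $\tilde c$ according to a specific rule determined by reading diagonals of $\lambda$ modulo $n$, then apply the code bijection to produce $v \in \tilde S_n$ with $c(v) = \tilde c$, and finally set $w_\lambda = v^{-1}$. The main obstacle is verifying that this diagonal-residue placement rule always yields a $v$ whose inverse is Grassmannian and satisfies $\lambda(v^{-1}) = \lambda$. The cleanest conceptual framework is the classical chain of bijections $\tilde S_n^0 \leftrightarrow \{n\text{-cores}\} \leftrightarrow \mathcal{B}^n$, realized respectively via the abacus display of a Grassmannian affine permutation and via the Lapointe--Morse bijection between $n$-cores and $(n-1)$-bounded partitions; compatibility of this chain with the definition of $\lambda(w)$ via $c(w^{-1})$ then reduces the lemma to unwinding definitions.
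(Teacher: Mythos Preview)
The paper does not actually prove this lemma: it is stated with the citation \cite{BB} (Bj\"orner--Brenti) and no proof follows. So there is nothing in the paper to compare your argument against.

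That said, your proposal as written is a plan, not a proof, and the injectivity paragraph has a genuine logical gap. You argue that from the window of a Grassmannian $w$ one can read off both the multiset of entries of $c(w^{-1})$ and their positions. But that is the trivial direction: of course $w$ determines its own code. What you need for injectivity is the reverse implication, that $\lambda(w)$ --- which records only the \emph{sorted} multiset of entries of $c(w^{-1})$, via the conjugate --- together with the Grassmannian condition determines $w$. You never explain why two distinct Grassmannian permutations cannot have codes $c(w^{-1})$ that are rearrangements of one another. The missing ingredient is a structural statement about \emph{which} sequences in $\Z_{\ge 0}^n \setminus \Z_{>0}^n$ arise as $c(w^{-1})$ for Grassmannian $w$, and why that class meets each rearrangement class exactly once.

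For surjectivity you defer to the $n$-core bijection chain $\tilde S_n^0 \leftrightarrow \{n\text{-cores}\} \leftrightarrow \mathcal B^n$, which is fine in principle, but you then need to check that the resulting composite bijection \emph{agrees} with $w \mapsto \lambda(w)$ as defined here via $c(w^{-1})$. You acknowledge this (``compatibility \ldots\ reduces the lemma to unwinding definitions'') but do not carry it out; that compatibility is precisely the content of the lemma and is not automatic. If you want a self-contained argument, the cleanest route is the one in \cite{BB}: show directly that $w \mapsto c(w)$ is a bijection $\tilde S_n \to \Z_{\ge 0}^n \setminus \Z_{>0}^n$, characterize the codes of $w^{-1}$ for Grassmannian $w$, and count.
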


The analogue of Proposition \ref{prop:dominant} has a similar proof.
\begin{prop}[{\cite{Lam:affStan}}] \label{prop:affdominant} Let $w \in \tS_n$.
\begin{enumerate}
\item
Suppose $[m_\la]F_w \neq 0$.  Then $\la \prec \la(w)$.
\item
$[m_{\la(w)}]F_w = 1$.
\end{enumerate}
\end{prop}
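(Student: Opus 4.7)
The plan is to mimic the proof of Proposition \ref{prop:dominant}, the main work being an affine version of the code-transformation lemma underlying that argument. Specifically, I would first check that if $\ell(s_i w) < \ell(w)$ for some $i \in \Z/n\Z$, then $c((s_i w)^{-1})$ is obtained from $c(w^{-1})$ by swapping the entries in positions $i$ and $i+1$ (with indices cyclic modulo $n$, so that $s_0$ interchanges positions $n$ and $1$) and then decrementing by $1$ the entry that has moved into position $i+1$. For $i \in \{1,\ldots,n-1\}$ this reduces to the finite argument of Proposition \ref{prop:dominant} applied inside the window; for $i=0$ it requires an extra check using the periodicity $w(j+n)=w(j)+n$.

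Extending this to cyclically decreasing factors, I would argue that if $v \in \tS_n$ is cyclically decreasing with support $S \subseteq \Z/n\Z$ of size $\ell(v)$ and $\ell(v^{-1}w) = \ell(w) - \ell(v)$, then applying the single-generator lemma in the order dictated by a cyclically decreasing reduced word for $v$ decrements the code entries at the positions indexed by $S$ each by exactly $1$ (the entries at positions outside $S$ may be permuted but their multiset is unchanged). The critical input is that a cyclically decreasing reduced word uses each simple generator at most once, so the successive swaps touch $|S|$ distinct positions.

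Part (1) now follows by a packing argument. A cyclically decreasing factorization $w = v_1 \cdots v_r$ produces subsets $S_1,\ldots,S_r \subseteq \Z/n\Z$ with $|S_j|=\ell(v_j)$, such that each position $i$ lies in exactly $c_i(w^{-1})$ of the $S_j$. For any $k$ and any selection of indices $j_1,\ldots,j_k$,
\[
|S_{j_1}|+\cdots+|S_{j_k}| \;\leq\; \sum_{i=1}^n \min(k,\, c_i(w^{-1})) \;=\; \sum_{j=1}^k \la(w)_j,
\]
the last equality because the middle sum counts the boxes in the first $k$ columns of the Young diagram of the sorted code, which is $\la(w)$ by definition of $\la(w)$ as the conjugate partition. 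Hence the partition obtained by sorting $(\ell(v_1),\ldots,\ell(v_r))$ is dominated by $\la(w)$, proving (1).

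For (2), the coefficient $[m_{\la(w)}]\tF_w$ counts factorizations with $\ell(v_j)=\la(w)_j$ in this precise order. Equality in the inequality above for $k=1,2,\ldots$ forces $S_j = \{i : c_i(w^{-1}) \geq j\}$, and the bijection between strict subsets of $\Z/n\Z$ and cyclically decreasing elements then pins down each $v_j$ uniquely; the transformation lemma, applied inductively, certifies that this $v_j$ is indeed a valid left divisor at each stage, so a unique such factorization exists. The main obstacle in the whole argument is the $s_0$ case of the transformation lemma: the finite case is essentially a one-line-notation check, but the affine wraparound demands a careful calculation with the bi-infinite window.
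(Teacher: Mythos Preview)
Your plan is exactly what the paper intends---it says only that the proof is ``similar'' to that of Proposition~\ref{prop:dominant}---and your single-generator transformation lemma (including the $s_0$ case) and its extension to a \emph{single} cyclically decreasing left factor are correct.

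The slip is in the packing step across several factors. The claim ``each position $i$ lies in exactly $c_i(w^{-1})$ of the $S_j$'' is false: for $w=s_2s_1s_2\in S_3\subset\tS_3$ one has $c(w^{-1})=(2,1,0)$, and the factorization $w=s_2\cdot s_1\cdot s_2$ gives $S_1=\{2\}$, $S_2=\{1\}$, $S_3=\{2\}$, so position~$2$ lies in two of the $S_j$ while $c_2=1$. The reason is that your own lemma \emph{permutes} the code entries, so the support $S_j$ of $v_j$ records which \emph{current} positions are hit at step~$j$, not which original ones. What does survive is the multiset statement: step~$j$ decrements $|S_j|$ distinct entries of the current code by~$1$. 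To recover your displayed inequality, track entries through the permutations (equivalently, relabel positions after each step so the permutation becomes trivial); then each of the $n$ tracked entries is decremented a total of exactly $c_i(w^{-1})$ times over all steps, hence at most $\min(k,c_i(w^{-1}))$ times over any $k$ selected steps, and the bound follows with $|T_{j_t}|=|S_{j_t}|$. The same correction applies in~(2): the formula $S_j=\{i:c_i(w^{-1})\ge j\}$ is only correct for $j=1$, since for $j\ge2$ the code has already been permuted; you should instead rerun the $j=1$ uniqueness argument on $v_1^{-1}w$, which is exactly the inductive step you already gesture at.
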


\subsection{$\La_{(n)}$ and $\La^{(n)}$}
Let $\La_{(n)} \subset \La$ be the subalgebra generated by $h_1,h_2,\ldots,h_{n-1}$, and let $\La^{(n)}:= \La/I_{(n)}$ where $I_{(n)}$ is the ideal generated by $m_\mu$ for $\mu \notin \Bo^n$.  A basis for $\La_{(n)}$ is given by $\{h_\la \mid \la \in \Bo^n\}$.  A basis for $\La^{(n)}$ is given by $\{m_\la \mid \la \in \Bo^n\}$.

The ring of symmetric functions $\La$ is a Hopf algebra, with coproduct 
given by $\Delta(h_k) = \sum_{j=0}^k h_j \otimes h_{k-j}$.  Equivalently, the coproduct of $f(x_1,x_2,\ldots) \in \La$ can be obtained by writing $f(x_1,x_2,\ldots,y_1,y_2,\ldots)$ in the form $\sum_i f_i(x_1,x_2,\ldots) \otimes g_i(y_1,y_2,\ldots)$ where $f_i$ and $g_i$ are symmetric in $x$'s and $y$'s respectively.  Then $\Delta(f)= \sum_i f_i \otimes g_i$.

The ring $\La$ is self Hopf-dual under the Hall inner product.  That is, one has $\ip{\Delta f, g\otimes h}  = \ip{f,gh}$ for $f,g,h\in \La$.  Here the Hall inner product is extended to $\La \otimes \La$ in the obvious way.  The rings $\La_{(n)}$ and $\La^{(n)}$ are in fact Hopf algebras, which are dual to each other under the same inner product.  We refer the reader to \cite{Mac} for further details.

\subsection{Affine Schur functions}
Stanley symmetric functions expand positively in terms of the basis of Schur functions (Corollary \ref{cor:stpos}).  We now describe the analogue of Schur functions for the affine setting.

For $\la \in \Bo^n$, we let $\tF_\la:=\tF_w$ where $w \in \tS_n^0$ is the unique affine Grassmannian permutation with $\la(w) = \la$.  These functions $\tF_\la$ are called {\it affine Schur functions} (or dual $k$-Schur functions, or weak Schur functions).

\begin{theorem}[\cite{LM:ktab,Lam:affStan}]\label{thm:affineSchurbasis}
The affine Schur functions $\{\tF_\la\mid \la \in \Bo^m\}$ form a basis of $\La^{(n)}$.
\end{theorem}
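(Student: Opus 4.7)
The plan is to establish a unitriangular transition between $\{\tF_\la \mid \la \in \Bo^n\}$ and the known monomial basis $\{m_\la \mid \la \in \Bo^n\}$ of $\La^{(n)}$, directly from Proposition \ref{prop:affdominant}. Since the affine Grassmannian permutations $\tS_n^0$ are in bijection with $\Bo^n$ via $w \mapsto \la(w)$, the set $\{\tF_\la \mid \la \in \Bo^n\}$ has exactly the right cardinality to be a basis.

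First, I would apply Proposition \ref{prop:affdominant} to the affine Grassmannian permutation $w$ with $\la(w) = \la$. This gives the expansion
$$
\tF_\la \;=\; m_\la \;+\; \sum_{\mu \prec \la} c_{\la\mu}\, m_\mu
$$
in $\La$, with coefficient $1$ on $m_\la$ and with all other $m_\mu$ indexed by partitions strictly smaller than $\la$ in dominance order. The key elementary observation is that $\mu \preceq \la$ forces $\mu_1 \leq \la_1$; hence if $\la \in \Bo^n$, then every $\mu$ that can appear satisfies $\mu_1 \leq \la_1 < n$, i.e., $\mu \in \Bo^n$. Consequently the expansion above already lives inside the span of $\{m_\mu \mid \mu \in \Bo^n\}$, and projecting to $\La^{(n)} = \La/I_{(n)}$ kills nothing: the same unitriangular formula persists in the quotient.

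Second, I would conclude by a standard triangularity argument. Order $\Bo^n$ by any linear refinement of the dominance order $\preceq$. The matrix expressing $\{\tF_\la \mid \la \in \Bo^n\}$ in the basis $\{m_\la \mid \la \in \Bo^n\}$ of $\La^{(n)}$ is then upper unitriangular, hence invertible over $\Z$. Therefore $\{\tF_\la \mid \la \in \Bo^n\}$ is itself a $\Z$-basis of $\La^{(n)}$.

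There is no real obstacle here once Proposition \ref{prop:affdominant} is in hand; the only subtlety worth flagging is verifying that dominance-smaller partitions of $\la \in \Bo^n$ remain $(n{-}1)$-bounded, which guarantees that no information is lost upon reduction modulo $I_{(n)}$. The nontrivial content — that $\tF_w$ is symmetric and that the leading monomial is $m_{\la(w)}$ — has already been absorbed into Theorem \ref{thm:afstsymm} and Proposition \ref{prop:affdominant}.
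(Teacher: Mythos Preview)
Your argument is correct and follows exactly the paper's approach: use Proposition \ref{prop:affdominant} to see that the transition matrix from $\{\tF_\la\}$ to $\{m_\la\}$ is unitriangular in dominance order, hence invertible. You have in fact supplied a detail the paper leaves implicit, namely that $\mu \preceq \la$ with $\la \in \Bo^n$ forces $\mu_1 \leq \la_1 < n$, so the expansion stays within $\La^{(n)}$.
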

\begin{proof}
By Proposition \ref{prop:affdominant}, the leading monomial term of $\tF_\la$ is $m_\la$.  Thus $\{\tF_\la\mid \la \in \Bo^m\}$ is triangular with resepect to the basis $\{m_\la\mid \la \in \Bo^m\}$, so that it is also a basis.
\end{proof}

We let $\{s^{(k)}_{\la}\} \subset \La_{(n)}$ denote the dual basis to $\tF_\la$.  These are the (ungraded) {\it $k$-Schur functions}, where $k = n-1$.  It turns out that the $k$-Schur functions are Schur positive.  However, affine Stanley symmetric functions are not.  Instead, one has:

\begin{theorem}[\cite{Lam:Schub}]\label{thm:affstpos}
The affine Stanley symmetric functions $\tF_w$ expand positively in terms of the affine Schur functions $\tF_\la$.
\end{theorem}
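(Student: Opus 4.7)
The plan is to prove positivity via the affine nilHecke / Peterson machinery to be developed in Sections~\ref{sec:algebra}--\ref{sec:affineFS}, combined with the affine-to-quantum correspondence hinted at in the introduction. First, I would identify $\tF_w$ with (the image under the appropriate Hopf pairing of) a canonical element $j_w$ in the affine Fomin-Stanley subalgebra $\aB \subset \afA$. Under the isomorphism that identifies $\aB$ with $\La_{(n)}$, and dually identifies $\La^{(n)}$ with pairings of $\aB$ against the affine nilHecke ring, the distinguished basis $\{j_v : v \in \tS_n^0\}$ of $\aB$ indexed by affine Grassmannian permutations corresponds (up to the duality of Theorem~\ref{thm:affineSchurbasis}) to the affine Schur functions $\{\tF_\la : \la \in \Bo^n\}$. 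Consequently the coefficients $c_\la^w$ in the desired expansion $\tF_w = \sum_\la c_\la^w \, \tF_\la$ are precisely the structure constants expressing $j_w \in \aB$ in the basis $\{j_v\}_{v \in \tS_n^0}$, and the theorem reduces to non-negativity of these constants.

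Second, I would invoke Peterson's theorem as worked out in \cite{LS:QH,LL}: the coefficients $c_\la^w$ coincide, after appropriate identifications, with Schubert structure constants in the homology of the affine Grassmannian $\Gr_{SL_n}$, and via Peterson's isomorphism between (a localization of) $\Pet$ and the quantum cohomology $QH^*(SL_n/P)$ they further identify with three-point genus-zero Gromov-Witten invariants on finite-type flag varieties. Gromov-Witten invariants of homogeneous spaces are non-negative (they enumerate honest stable maps, with transversality supplied by Kleiman's theorem), which is Mihalcea's positivity for quantum Schubert calculus. Transporting this non-negativity back through the identifications of the first step yields $c_\la^w \geq 0$.

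The main obstacle --- and really the entire content --- is the positivity input at the last step: no purely combinatorial proof of the non-negativity of quantum Schubert structure constants on flag varieties is currently known, so any argument I am aware of must import a geometric positivity statement. The original proof in \cite{Lam:Schub} short-circuits the quantum detour by directly identifying $\tF_w$ with the pullback of the affine flag Schubert class $\xi^w \in H^*(LSU(n)/T)$ along $\Omega SU(n) \hookrightarrow LSU(n)/T$, and then appealing to positivity of Schubert pullbacks between affine homogeneous spaces; but this too rests on Kleiman-type transversality. Consistent with the stated intention of these notes to develop the theory ``with the exception of positivity'' without geometric reasoning, I do not expect to remove this geometric ingredient in a self-contained algebraic proof.
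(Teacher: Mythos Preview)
The paper does not prove this theorem; it states that positivity was established geometrically in \cite{Lam:Schub} and points (Section~\ref{sec:geom}, items (3)--(4)) to the two mechanisms you describe: identification of $\tF_w$ with a Schubert pullback along $\Omega SU(n)\hookrightarrow LSU(n)/T$, and the Peterson correspondence with quantum Schubert calculus \cite{Pet,LS:QH,LL}. Your outline and your closing assessment---that the positivity input is irreducibly geometric at present---match the paper exactly.

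One technical imprecision: there is no element ``$j_w$'' of $\aB$ for a general $w\in\tS_n$; the $j$-basis $\{j^0_v\}$ is indexed only by affine Grassmannian $v\in\aW^0$ (Theorem~\ref{thm:j0basis}). The correct algebraic identification, via Proposition~\ref{prop:noncommCauchy} and \eqref{eq:noncommkSchur}, is that the coefficient of $\tF_\la$ in $\tF_w$ equals the coefficient of $A_w$ in $\s^{(k)}_\la=j^0_v$ (where $\la(v)=\la$), i.e.\ the entry $j_v^w$. These entries are in turn determined by the product structure constants of the $j$-basis (the ``$j$-basis is self-describing'' exercise in Section~\ref{sec:Pet}), so your reduction to positivity of Schubert structure constants in $H_*(\Gr_{SL_n})$ survives once routed through this correction.
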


Theorem \ref{thm:affstpos} was established using geometric methods.  See Section \ref{sec:geom} and \cite{Lam:Schub}.  It is an open problem to give a combinatorial interpretation of the affine Stanley coefficients, expressing affine Stanley symmetric functions in terms of affine Schur functions.

\subsection{Example: The case of $\tS_3$}
To illustrate Theorem \ref{thm:affineSchurbasis}, we completely describe the affine Schur functions for $\tS_3$.  

\begin{prop}\label{prop:redts3}
Let $w \in \tS_n$ be the affine Grassmannian permutation corresponding to the partition $(2^a1^b)$.  Then $|R(w)| = \binom{\lfloor b/2+a\rfloor}{a}$.
\end{prop}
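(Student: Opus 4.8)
The plan is to realize $R(w)$ as the set of maximal chains in the left weak order on the affine Grassmannian $\tS_3^0$, and then to extract a two-term recursion for $|R(w)|$ whose solution is the asserted binomial coefficient. The first thing I would record is the structural fact that makes such a recursion possible: the set $\tS_n^0$ of minimal-length coset representatives of $\tS_n/S_n$ is a \emph{lower order ideal} in the left weak order. Indeed, if $w = xu$ with $\ell(w)=\ell(x)+\ell(u)$ and $us_j < u$ for some $j\in\{1,\dots,n-1\}$, then $ws_j = x\,us_j$ has length $\le \ell(w)-1$, so $ws_j<w$, contradicting $w\in\tS_n^0$; hence $u\in\tS_n^0$. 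Consequently, whenever $s_iw<w$ and $w\in\tS_3^0$ we have $s_iw\in\tS_3^0$, so counting reduced words by their first letter gives
$$ |R(w)| = \sum_{i\,:\,\ell(s_iw)<\ell(w)} |R(s_iw)|, $$
with every term again indexed by an affine Grassmannian permutation. Writing $w(a,b)$ for the element with $\la(w)=(2^a1^b)$ and $r(a,b)=|R(w(a,b))|$, the task reduces to identifying the left descents of $w(a,b)$ and the partitions of the resulting smaller elements.

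Next I would make $w(a,b)$ explicit. A direct computation of the code shows that the element of $\tS_3^0$ with $\la(w)=(2^a1^b)$ has window
$$ w(a,b) = \bigl[\,1-a-b,\ \ 2-a+\lfloor b/2\rfloor,\ \ 3+2a+\lceil b/2\rceil\,\bigr]. $$
Left multiplication by $s_i$ adds $1$ to whichever of the three window entries is $\equiv i \pmod 3$ and subtracts $1$ from the entry that is $\equiv i+1$, fixing the third; since the three entries occupy the three residue classes, exactly one is raised and one lowered. For each of the three choices of $i$ I would read off whether the modified window is still increasing (equivalently, whether $s_iw(a,b)$ stays in $\tS_3^0$) and compare lengths. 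The outcome depends only on the parity of $b$: when $b$ is even the left descents give $w(a,b)\to w(a,b-1)$ (present for $b\ge 2$) and $w(a,b)\to w(a-1,b+1)$ (present for $a\ge 1$), while when $b$ is odd there is the single left descent $w(a,b)\to w(a,b-1)$.

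Feeding this into the chain recursion yields
$$ r(a,b) = r(a,b-1)\,[\,b\ge 1\,] + r(a-1,b+1)\,[\,a\ge 1\text{ and }b\text{ even}\,], \qquad r(0,0)=1, $$
and I would finish by induction on $\ell=2a+b$. With $f(a,b)=\binom{\lfloor b/2\rfloor+a}{a}$ one checks $f(a,b-1)=f(a,b)$ when $b$ is odd, while for $b$ even Pascal's rule gives $f(a,b-1)+f(a-1,b+1) = \binom{a+b/2-1}{a}+\binom{a+b/2-1}{a-1} = \binom{a+b/2}{a} = f(a,b)$, the boundary cases $a=0$ and $b=0$ being immediate. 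The main obstacle is the middle step: setting up the window of $w(a,b)$ correctly and carrying out the residue-and-parity bookkeeping for the three reflections, since the literal descent generator depends on $a,b\bmod 3$ even though—pleasantly—the induced move on partitions depends only on the parity of $b$. Verifying $\la(w(a,b))=(2^a1^b)$ directly from the window (via the code of $w(a,b)^{-1}$, whose nonzero parts rearrange to $(a+b,a)$, the conjugate of $(2^a1^b)$) is the other routine-but-fiddly ingredient.
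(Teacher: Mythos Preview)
Your argument is correct. The paper actually states Proposition~\ref{prop:redts3} without proof---it appears in the subsection of worked examples for $\tilde S_3$ and is evidently left to the reader as a routine computation---so there is no ``paper's own proof'' to compare against.

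That said, your route is a clean one and would be a natural choice for a written proof. The three ingredients all check out. First, the order-ideal property of $\tilde S_n^0$ in left weak order is standard and your one-line argument for it is fine. Second, your window formula
\[
w(a,b)=\bigl[\,1-a-b,\ 2-a+\lfloor b/2\rfloor,\ 3+2a+\lceil b/2\rceil\,\bigr]
\]
is correct (it has the right coordinate sum, is increasing, and specializes correctly in all small cases such as $w(1,2)=[-2,2,6]$ which the paper itself uses), and the code of its inverse does rearrange to $(a+b,a,0)$, giving $\lambda(w)=(2^a1^b)$. Third, your identification of the left descents---two when $b$ is even (to $w(a,b-1)$ and $w(a-1,b+1)$), one when $b$ is odd (to $w(a,b-1)$)---is right; one should perhaps emphasize that this is \emph{not} simply the Young's-lattice covering relation on $2$-bounded partitions (for instance $w(1,1)=s_2s_1s_0$ has the unique reduced word $210$, so only one cover), which is why the parity of $b$ genuinely enters. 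Once that is in hand, the Pascal verification is immediate.

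The only place a reader might want a bit more is the ``residue-and-parity bookkeeping'' you allude to: it would strengthen the write-up to record, for general $(a,b)$, which $s_i$ is the descent and to exhibit the resulting window explicitly, rather than asserting the outcome. But this is exposition, not a gap.
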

\begin{prop}\label{prop:ts3}
The affine Schur function $\tF_{2^a,1^b}$ is given by
$$
\tF_{2^a,1^b} = \sum_{j = 0}^a \binom{\lfloor b/2+a-j\rfloor}{a-j} m_{2^j1^{b+2a-2j}}.
$$
The $k$-Schur function $s^{(2)}_{2^a,1^b}$ is given by
$$
s^{(2)}_{2^a,1^b}=h_2^a e_2^{\lfloor b/2 \rfloor} h_1^{b- 2\lfloor b/2 \rfloor}.
$$
\end{prop}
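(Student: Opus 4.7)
The plan is to first prove the formula for $\tF_{(2^a, 1^b)}$ by refining Proposition~\ref{prop:redts3}, and then deduce the $k$-Schur formula by duality in $\Lambda_{(3)}$. For the affine Schur function, I unpack the definition: the coefficient of $m_{2^j 1^{b+2a-2j}}$ in $\tF_{(2^a, 1^b)}$ is the number of cyclically decreasing factorizations $w = v_1 \cdots v_{j+k}$ (with $k = b + 2a - 2j$) of $w = w_{(2^a, 1^b)}$ satisfying $\ell(v_i) = 2$ for $i \leq j$ and $\ell(v_i) = 1$ otherwise. In $\tS_3$, cyclically decreasing elements correspond to proper subsets of $\{0, 1, 2\}$ and so have length at most $2$; the three length-$2$ cyclically decreasing elements $s_0 s_2$, $s_1 s_0$, $s_2 s_1$ each have a unique reduced word, of the form $c\,(c-1) \pmod 3$. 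Hence the coefficient equals the number of reduced words $i_1 \cdots i_{2a+b} \in R(w)$ whose first $2j$ letters form $j$ successive cyclically decreasing pairs (i.e., $i_{2s} \equiv i_{2s-1} - 1 \pmod 3$ for $s = 1, \ldots, j$). The case $j = 0$ recovers Proposition~\ref{prop:redts3}. For general $j$, I would refine that bijection so that $R(w_{(2^a, 1^b)})$ is parameterized by $a$-subsets of $\{1, \ldots, \lfloor b/2\rfloor + a\}$, with the cyclically decreasing prefix condition pulling back to the condition that the subset contain $\{1, \ldots, j\}$; this leaves $\binom{\lfloor b/2\rfloor + a - j}{a-j}$ choices for the remaining elements.

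For the $k$-Schur formula I appeal to duality. Writing $c = \lfloor b/2\rfloor$ and $\epsilon = b \bmod 2$ and using $e_2 = h_1^2 - h_2$ in $\Lambda_{(3)}$, the binomial theorem gives
\[
h_2^a\, e_2^c\, h_1^\epsilon \;=\; \sum_{k=0}^{c} (-1)^{c-k}\binom{c}{k}\, h_{(2^{a+c-k},\, 1^{2k+\epsilon})},
\]
which is unitriangular in the $\{h_\mu\}_{\mu \in \Bo^3}$ basis of $\Lambda_{(3)}$; hence $\{h_2^a e_2^{\lfloor b/2\rfloor} h_1^{b \bmod 2}\}_{(a,b) \in \Bo^3}$ is itself a basis. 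Using $\langle h_\mu, m_\nu\rangle = \delta_{\mu\nu}$ together with the first part, the pairing $\langle h_2^a e_2^c h_1^\epsilon,\, \tF_{(2^{a'}, 1^{b'})}\rangle$ vanishes unless $2a' + b' = 2a + b$ (which then forces $b' \equiv b \pmod 2$), in which case, setting $d := a' - a$, it reduces to
\[
\sum_{m=0}^{d}(-1)^m \binom{c}{m}\binom{c-m}{d-m} \;=\; \binom{c}{d}\sum_{m=0}^d (-1)^m\binom{d}{m} \;=\; \binom{c}{d}(1-1)^d \;=\; \delta_{d, 0},
\]
using the chain identity $\binom{c}{m}\binom{c-m}{d-m} = \binom{c}{d}\binom{d}{m}$ (with a separate argument that the sum is empty when $d < 0$). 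This identifies $h_2^a e_2^{\lfloor b/2\rfloor} h_1^{b \bmod 2}$ as the dual basis element $s^{(2)}_{(2^a, 1^b)}$.

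The main obstacle is the refined bijection in the first part. A natural implementation uses the braid graph of $R(w_{(2^a, 1^b)})$: every reduced word is related to others by braid moves $s_i s_{i+1} s_i \leftrightarrow s_{i+1} s_i s_{i+1}$, and one wants to identify an explicit set of $\lfloor b/2\rfloor + a$ ``braid sites,'' linearly ordered, such that choosing $a$ of them determines a reduced word, and such that the cyclically decreasing prefix condition on the first $j$ pairs corresponds precisely to forcing the first $j$ sites in this ordering to be chosen. This demands careful tracking of how braid moves propagate through reduced words in the cyclically symmetric setting of $\tS_3$, and is where all the real combinatorial work lies.
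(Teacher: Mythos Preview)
The paper states Proposition~\ref{prop:ts3} (and Proposition~\ref{prop:redts3}) without proof, so there is no argument in the paper to compare your proposal against.

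Your duality argument for the $k$-Schur half is correct and clean. Expanding $h_2^a e_2^{\lfloor b/2\rfloor} h_1^{b\bmod 2}$ via $e_2=h_1^2-h_2$ gives a triangular change from $\{h_\mu:\mu\in\Bo^3\}$ with diagonal coefficient $+1$ (at $k=c$), so these elements form a basis of $\La_{(3)}$. The pairing computation you wrote down, using $\binom{c}{m}\binom{c-m}{d-m}=\binom{c}{d}\binom{d}{m}$, correctly collapses to $\delta_{d,0}$; the case $d<0$ is handled since then no $h_\mu$ in your expansion can match any $m_\nu$ appearing in $\tF_{(2^{a'},1^{b'})}$. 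So, granting the first formula, this half is complete.

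The first half, however, is not proved. You correctly reduce $[m_{2^j1^{b+2a-2j}}]\tF_{2^a,1^b}$ to counting reduced words of $w_{(2^a,1^b)}$ whose first $2j$ letters split into $j$ cyclically decreasing pairs, and you correctly note that $j=0$ is Proposition~\ref{prop:redts3}. But the bijection $R(w_{(2^a,1^b)})\leftrightarrow\binom{[\lfloor b/2\rfloor+a]}{a}$ that carries the prefix condition to ``the subset contains $\{1,\dots,j\}$'' is asserted, not constructed; you say so yourself. The ``braid sites'' picture is reasonable in $\tS_3$ (every reduced word is a $\pm1$ walk on $\Z/3\Z$ and braid moves are local direction reversals), but turning that into a labelled bijection and checking the compatibility with the prefix condition is precisely the work you have deferred. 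As written, this is a genuine gap.

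A shorter route that avoids the global bijection: by Proposition~\ref{prop:affdominant}(2), $[m_{2^a,1^b}]\tF_{w_{(2^a,1^b)}}=1$, so there is a \emph{unique} length-two cyclically decreasing $v$ with $\ell(v^{-1}w_{(2^a,1^b)})=\ell(w_{(2^a,1^b)})-2$. If you verify directly (from the window notation or the code) that $v^{-1}w_{(2^a,1^b)}=w_{(2^{a-1},1^b)}$---for instance, when $(a,b)=(1,2)$ one finds $v=s_2s_1$ and $v^{-1}w=s_2s_0=w_{(1,1)}$---then iterating $j$ times reduces the desired count to $|R(w_{(2^{a-j},1^b)})|=\binom{\lfloor b/2\rfloor+a-j}{a-j}$ by Proposition~\ref{prop:redts3}. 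This replaces your global bijection by a single local lemma about one step in left weak order, which is much easier to check.
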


\begin{example}
For $w = 1210$, we have $a = 1$ and $b = 2$.  Thus $R(w) = \{1210,2120\}$ has cardinality $\binom{2}{1} = 2$, and $\tF_{2,1^2} = m_{211}+2m_{1111}$.
\end{example}
\begin{example}
The affine Stanley symmetric function of Example \ref{ex:21202} expands as
$$
\tF_w = \tF_{2^2,1}+\tF_{2,1^3}+\tF_{1^5}
$$
agreeing with Theorem \ref{thm:affstpos}.
\end{example}

\subsection{Exercises and problems}
\begin{enumerate}
\item (Coproduct formula \cite{Lam:affStan})
Show that $\Delta \tF_w = \sum_{uv = w: \ell(w) = \ell(u)+\ell(v)} \tF_u \otimes \tF_v$.
\item (321-avoiding affine permutations \cite{Lam:affStan})
Extend the results in Section \ref{ssec:combinprob} on 321-avoiding permutations to the affine case.  
\item (Affine vexillary permutations)
For which $w \in \tS_n$ is $\tF_w$ equal to an affine Schur function $\tF_\la$?  See the discussion of vexillary permutations in Section \ref{ssec:EGprob} and also \cite[Problem 1]{Lam:affStan}.
\item \label{prob:limit} ($n \to \infty$ limit)
Show that for a fixed partition $\la$, we have $\lim_{n \to \infty} \tF^{(n)}_\la = s_\la$, where $\tF^{(n)}$ denotes the affine Schur function for $\tS_n$.
\item
Extend Proposition \ref{prop:ts3} to all affine Stanley symmetric functions in $\tS_3$, and thus give a formula for the affine Stanley coefficients.
\item
Is there an affine analogue of the fundamental quasi-symmetric functions?  For example, one might ask that affine Stanley symmetric functions expand positively in terms of such a family of quasi-symmetric functions.  Affine Stanley symmetric functions do not in general expand positively in terms of fundamental quasi-symmetric functions (see \cite[Theorem 5.7]{McN}).
\item
Find closed formulae for numbers of reduced words in the affine symmetric groups $\tS_n$, $n > 3$, extending Proposition \ref{prop:redts3}.  Are there formulae similar to the determinantal formula, or hook-length formula for the number of standard Young tableaux?
\item ($n$-cores)
A skew shape $\la/\mu$ is a $n$-ribbon if it is connected, contains $n$ squares, and does not contain a $2\times 2$ square.
An $n$-core $\la$ is a partition such that there does not exist $\mu$ so that $\la/\mu$ is a $n$-ribbon.
There is a bijection between the set of $n$-cores and the affine Grassmannian permutaitons $\tS_n^0$.  Affine Schur functions can be described in terms of tableaux on $n$-cores, called $k$-tableau \cite{LM:core} (or weak tableau in \cite{LLMS}).

\item (Cylindric Schur functions \cite{Pos,McN})
Let $C(k,n)$ denote the set of lattice paths $p$ in $\Z^2$ where every step either goes up or to the right, and which is invariant under the translation $(x,y) \mapsto (x+n-k,y+k)$.  Such lattice paths can be thought of as the boundary of an infinite periodic Young diagram, or equivalently of a Young diagram on a cylinder.  We write $p \subset q$ if $p$ lies completely to the left of $q$.  A cylindric skew semistandard tableau is a sequence $p_0 \subset p_1 \subset \cdots \subset p_k$ of $p_i \in C(k,n)$ where the region between $p_i$ and $p_{i+1}$ does not contain two squares in the same column.  One obtains \cite{Pos} a natural notion of a cylindric (skew) Schur function.  Show that every cylindric Schur function is an affine Stanley symmetric function, and every affine Stanley symmetric function of a 321-avoiding permutation is a cylindric Schur function (\cite{Lam:affStan}).

\item (Kashiwara-Shimozono affine Grothendieck polynomials)
The usual Stanley symmetric functions can be expressed as stable limits of Schubert polynomials \cite{BJS}.  What is the relationship between affine Stanley symmetric functions and the affine Grothendieck polynomials of Kashiwara and Shimozono \cite{KS}?
\item
Is there a good notion of Coxeter-Knuth equivalence for reduced words of affine permutations?  This may have an application to the affine Schur positivity of affine Stanley symmetric functions (Theorem \ref{thm:affstpos}).  See also Section\ref{ssec:EGprob} (\ref{rem:CK}).
\item (Affine Little bijection \cite{LS:affineLittle})
There is an affine analogue of Little's bijection (Section \ref{ssec:EGprob}) developed in \cite{LS:affineLittle}.  It gives a combinatorial proof of the affine analogue of the transition formula \eqref{eq:transition}.  Can the affine Little bijection, or the affine transition formula lead to a proof of Theorem \ref{thm:affstpos}?  Can one define a notion of dual EG-equivalence using the affine Little bijection?
\item (Branching positivity \cite{LLMS2,Lam:ASP}) Let $\tF_\la^{(n)}$ denote the affine Schur functions for $\tS_n$.  Then $\tF_\la^{(n+1)}$ expands positively in terms of $\tF_\mu^{(n)}$ modulo the ideal in symmetric functions generated by $m_\nu$ with $\nu_1 \geq n$.  Deduce using (\ref{prob:limit}) that $k$-Schur functions are Schur positive.
\end{enumerate}

\section{Root systems and Weyl groups}
\label{sec:Weyl}
In this section, we let $\fW$ be a finite Weyl group and $\aW$ denote the corresponding affine Weyl group.
We shall assume basic familiarity with Weyl groups, root systems, and weights \cite{Hum,Kac}.

\subsection{Notation for root systems and Weyl groups}
Let $A = (a_{ij})_{i,j \in \aI}$ denote an affine Cartan matrix, where $\aI = \fI \cup\{0\}$, so that $(a_{ij})_{i,j \in \fI}$ is the corresponding finite Cartan matrix.  For example, for type $\tilde A_{n-1}$ (corresponding to $\tS_n$) and $n > 2$ we have $\aI = \Z/n\Z$ and 
$$
a_{ij} = \begin{cases} 2 &\mbox{if $i = j$} \\
-1 &\mbox{if $j = i\pm 1$} \\
0 & \mbox{otherwise.}
\end{cases}
$$

The affine Weyl group $\aW$ is generated by  involutions $\{s_i \mid i \in \aI\}$ satisfying the relations $(s_is_j)^{m_{ij}} = \id$, where for $i \neq j$, one defines $m_{ij}$ to be $2,3,4,6,\infty$ according as $a_{ij}a_{ji}$ is $0, 1, 2, 3, \ge4$.  The finite Weyl group $\fW$ is generated by $\{s_i \mid i \in \fI\}$.  For the symmetric group $W = S_n$, we have $I = [n-1]$, $m_{i,i+1} = 3$, and $m_{ij} = 2$ for $|i-j| \geq 2$.

Let $R$ be the root system
for $W$.  Let $R^+$ denote the positive roots, $R^-$ denote the
negative roots and $\{ \alpha_i \mid i \in \fI\}$ denote the simple
roots.   Let $\theta$ denote the highest root of $R^+$.  Let $\rho = \frac{1}{2}\sum_{\alpha \in R^+} \alpha$ denote the half sum of positive roots.  Also let $\{\alpha_i^\vee \mid i \in \fI\}$ denote the simple coroots.  

We write $R_\af$ and $R^+_\af$ for the affine root system, and positive affine roots.  The positive simple affine roots (resp. coroots) are $\{\alpha_i \mid i \in \aI\}$ (resp. $\{\alpha^\vee_i \mid i \in \aI\}$).  The null root $\delta$ is given by $\delta = \alpha_0 + \theta$.  A root $\alpha$ is {\it real} if it is in the $\aW$-orbit of the simple affine roots, and {\it imaginary} otherwise.  The imaginary roots are exactly $\{k\delta \mid k \in \Z \setminus \{0\}\}$.  Every real affine root is of the form $\alpha + k\delta$, where $\alpha \in R$.   The root $\alpha+k\delta$ is positive if $k > 0$, or if $k = 0$ and $\alpha \in R^+$.  

Let $Q = \oplus_{i \in \fI} \Z \cdot \alpha_i$ denote the root lattice and let $Q^\vee = \oplus_{i \in I} \Z \cdot \alpha_i^\vee$ denote the co-root
lattice.  Let $P$ and $P^\vee$ denote the weight lattice and
co-weight lattice respectively.  Thus $Q \subset P$ and $Q^\vee \subset P^\vee$.  We also have a map $Q_\af = \oplus_{i \in \aI} \Z \cdot \alpha_i \to P$ given by sending $\alpha_0$ to $-\theta$ (or equivalently, by sending $\delta$ to $0$).  Let $\ip{.,.}$ denote
the pairing between $P$ and $P^\vee$.  In particular, one requires that $\ip{\al_i^\vee,\al_j}=a_{ij}$.  

The Weyl group acts on weights via $s_i \cdot \la = \la - \ip{\alpha_i^\vee,\la}\alpha_i$ (and via the same formula on $Q$ or $Q_\af$), and on coweights via $s_i \cdot \mu = \mu - \ip{\mu,\alpha_i}\alpha^\vee_i$ (and via the same formula on $Q^\vee$).  For a real root $\alpha$ (resp. coroot $\alpha^\vee$), we let $s_\alpha$ 
(resp. $s_{\alpha^\vee}$) denote the corresponding reflection, defined by $s_\alpha = wr_i w^{-1}$ if $\alpha = w \cdot \alpha_i$.  The reflection $s_\alpha$ acts on weights by $s_\alpha \cdot \la = \la - \ip{\alpha^\vee,\la}\alpha$.

\begin{example}
Suppose $W = S_n$ and $\aW = \tS_n$.  We have positive simple roots $\alpha_1,\alpha_2,\ldots,\alpha_{n-1}$ and an affine simple root $\alpha_0$.
The finite positive roots are $R^+=\{ \alpha_{i,j} := \alpha_i + \alpha_{i+1} \cdots + \alpha_{j-1} \mid 1 \leq i < j \leq n\}$.  The reflection $s_{\alpha_{i,j}}$ is the transposition $(i,j)$.  The highest root is $\theta = \alpha_1 + \cdots + \alpha_{n-1}$.  The affine positive roots are $R^+_\af = \{\alpha_{i,j} \mid i < j \}$, where for simple roots the index is taken modulo $n$.  Note that one has 
$\alpha_{i,j} = \alpha_{i+n,j+n}$.  The imaginary roots are of the form $\alpha_{i,i+kn}$.  For a real root $\alpha_{i,j}$, the reflection $s_{\alpha_{i,j}}$ is the affine transposition $(i,j)$.

The weight lattice can be taken to be $P = \Z^n/(1,1,\ldots,1)$, and the coweight lattice to be $P^\vee = \{(x_1,x_2,\ldots,x_n) \in \Z^n \mid \sum_i x_i = 0\}$.  The roots and coroots are then $\alpha_{i,j} = e_i-e_j = \alpha^\vee_{i,j}$ (though the former is in $P$ and the latter is in $P^\vee$).  The inner product $P^\vee \times P \to \Z$ is induced by the obvious one on $\Z^n$.
\end{example}

\subsection{Affine Weyl group and translations}
The affine Weyl group can be expressed as the semi-direct product $\aW= W \ltimes Q^\vee$, as follows.  For each $\la \in Q^\vee$, one has a translation element $t_\la \in \aW$.  Translations are multiplicative, so that $t_\lambda \cdot t_\mu$ =
$t_{\lambda+\mu}$.  We also have the conjugation formula $w\, t_\lambda
w^{-1} = t_{w \cdot \lambda}$ for $w \in W$ and $\la \in Q^\vee$.
Let
$s_0$ denote the additional simple generator of $\aW$.  Then translation elements are related to the simple generators via the formula$$s_0 = s_{\theta^\vee} t_{-\theta^\vee}.$$ 

\begin{example}
For $\aW = \tS_n$, and $\la = (\la_1,\la_2,\ldots,\la_n) \in Q^\vee$, we have
$$
t_\la= [1+n\la_1,2+n\la_2,\ldots,n+n\la_n].
$$
Thus $t_{-\theta^\vee} = [1-n,2,\ldots,n-1,2n]$ and $s_0=s_{\theta^\vee}t_{-\theta^\vee}$ is the equality
$$
[0,2,\ldots,n-1,n+1]=[n,2,\ldots,n-1,1]\cdot[1-n,2,\ldots,n-1,2n].
$$
\end{example}

The element $wt_\la$ acts on $\mu \in P$ via
\begin{equation}\label{eq:affinewaction}
wt_\la \cdot \mu = w \cdot \mu.
\end{equation}
In other words, the translations act trivially on the {\it finite} weight lattice.  This action is called the {\it level zero action}.  

Let $\ell: \aW \rightarrow \Z_{\geq 0}$ denote the length function of
$\aW$.  Thus $\ell(w)$ is the length of the shortest reduced factorization of $w$.  

\begin{exercise}
For $w\, t_\la \in \aW$, we have
\begin{equation}
\label{eq:length} \ell(w\,t_\lambda) = \sum_{\alpha \in R^+} |
\ip{\lambda,\alpha} + \chi(w\cdot\alpha)|,
\end{equation}
where $\chi(\alpha) = 0$ if $\alpha \in R^+$ and $\chi(\alpha) = 1$
otherwise.
\end{exercise}

A coweight $\la$ is {\it dominant} (resp. {\it anti-dominant}) if $\ip{\la,\alpha} \geq 0$ (resp. $\leq 0$) for every root $\alpha \in R^+$.

\begin{exercise}
Suppose $\la \in Q^\vee$ is dominant.  Then $\ell(t_{w\la}) = 2\ip{\la,\rho}$.
\end{exercise}

Let $\aW^0$ denote the minimal length coset representatives of
$\aW/W$, which we call {\it Grassmannian} elements. There is a
natural bijection between $\aW^0$ and $Q^\vee$: each coset $\aW/W$
contains one element from each set.  

\begin{exercise}
We have $\aW^0 \cap Q^\vee = Q^-$, the elements of the coroot lattice which are anti-dominant.
\end{exercise}

In fact an element $w t_\lambda$ lies in $\aW^0$ if and only if $t_\lambda \in Q^-$ and $w \in W^\lambda$ where
$W^\lambda$ is the set of minimal length representatives of $W/W_\lambda$ and $W_\lambda$ is the stabilizer subgroup of
$\lambda$.

\section{NilCoxeter algebra and Fomin-Stanley construction}
\label{sec:algebra}
Let $W$ be a Weyl group and $\aW$ be the corresponding affine Weyl group.
\subsection{The nilCoxeter algebra}
The {\it nilCoxeter algebra} $\A_0$ is the algebra over $\Z$ generated by $\{A_i \mid i \in I\}$ with relations
\begin{align*}
A_i^2 &= 0 \\ 
(A_iA_j)^b &= (A_jA_i)^b & \text{if } (s_is_j)^b = (s_js_i)^b \\ A_j(A_iA_j)^b &= (A_iA_j)^bA_i &\text{if } s_j(s_is_j)^b = (s_is_j)^bs_i
\end{align*}
The algebra $\A_0$ is graded, where $A_i$ is given degree $1$.

The corresponding algebra for the affine Weyl group will be denoted $\aNC$.

\begin{proposition}
The nilCoxeter algebra $\A_0$ has basis $\{A_w \mid w \in W\}$, where $A_w = A_{i_1}A_{i_2} \cdots A_{i_\ell}$ for any reduced word $i_1 i_2 \cdots i_\ell$ of $w$.  The mulitplication is given by
$$
A_w \,A_v = \begin{cases} A_{wv} & \mbox{if $\ell(wv) = \ell(w) + \ell(v)$} \\
0 & \mbox{otherwise.}
\end{cases}
$$
\end{proposition}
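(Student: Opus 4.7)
The proof breaks into three parts: well-definedness of $A_w$, spanning, and linear independence; the multiplication formula will then fall out essentially by definition.

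First I would show that $A_w$ does not depend on the choice of reduced word $i_1 i_2 \cdots i_\ell$ for $w$. By Matsumoto's theorem (or Tits's theorem on reduced words), any two reduced expressions for $w \in W$ can be transformed into one another using only the braid relations $s_j (s_is_j)^b = (s_is_j)^b s_i$ (whenever the corresponding $m_{ij}$ holds); the relation $s_i^2=1$ is never needed, because applying it to a reduced word would decrease its length. The defining relations of $\A_0$ include precisely these braid relations, so $A_{i_1} A_{i_2} \cdots A_{i_\ell}$ depends only on $w$. This gives a well-defined element $A_w \in \A_0$ for each $w \in W$.

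Next I would show that the $\{A_w\}$ span $\A_0$. Any monomial in the generators has the form $A_{i_1}\cdots A_{i_k}$. If $s_{i_1}\cdots s_{i_k}$ is reduced then the monomial equals $A_w$ for $w=s_{i_1}\cdots s_{i_k}$. If it is not reduced then, by the strong exchange condition in $W$, one can apply braid moves (which are valid in $\A_0$) to bring two identical generators $A_i$ next to each other, at which point the relation $A_i^2=0$ forces the whole monomial to vanish. Thus $\A_0$ is spanned by $\{A_w\}_{w\in W}$, and moreover any product $A_w A_v$ equals $A_{wv}$ when $\ell(wv)=\ell(w)+\ell(v)$ (concatenate reduced words) and equals $0$ otherwise (the concatenation is not reduced, so the previous argument applies). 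This already gives the multiplication formula.

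The main obstacle is linear independence of $\{A_w\}$, which I would establish by exhibiting a faithful representation of $\A_0$. Let $M = \bigoplus_{w \in W} \Z \cdot [w]$ be the free $\Z$-module on $W$, and define operators $\pi_i : M \to M$ by
\[
\pi_i \cdot [w] = \begin{cases} [s_i w] & \text{if } \ell(s_i w) > \ell(w), \\ 0 & \text{otherwise.} \end{cases}
\]
A direct check using the standard properties of the Bruhat order shows that the $\pi_i$ satisfy the nilCoxeter relations: $\pi_i^2=0$ is immediate, and the braid relations follow from the fact that for any $w$ and any pair $i \neq j$, the set of reduced words for a Weyl group element starting with $s_i$ or $s_j$ behaves exactly as required (this is essentially Matsumoto's theorem again, applied to the rank-two parabolic). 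Hence $M$ is an $\A_0$-module. Now if $\sum_{w} c_w A_w = 0$ in $\A_0$, apply the corresponding operator to $[\mathrm{id}] \in M$; by induction on length one sees that $A_w \cdot [\mathrm{id}] = [w]$, so $\sum_w c_w [w] = 0$ in $M$, forcing all $c_w = 0$. This yields the desired basis, and combined with the multiplication rule established above completes the proof.
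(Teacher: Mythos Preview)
Your argument is correct and is essentially the standard proof of this fact. Note, however, that the paper does not actually prove this proposition: it is stated as a basic structural result about the nilCoxeter algebra and left without proof, as is common in expository treatments.

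Two small remarks on your write-up. First, the fact you invoke in the spanning step---that any non-reduced word can be transformed by braid moves alone into one containing a factor $s_is_i$---is usually attributed to Tits (or called the Word Property, as in Bj\"orner--Brenti); the phrase ``strong exchange condition'' refers to something slightly different, though the two are closely related. Second, the verification that the operators $\pi_i$ on $M$ satisfy the braid relations deserves a sentence more of care: for fixed $i\neq j$ and fixed $w$, the elements $s_iw,\,s_js_iw,\,\ldots$ climb through the coset $\langle s_i,s_j\rangle w$, and one checks that both $\pi_i\pi_j\pi_i\cdots$ and $\pi_j\pi_i\pi_j\cdots$ (each of length $m_{ij}$) send $[w]$ to $[w_0^{ij}w]$ if $w$ is the minimal coset representative, and to $0$ otherwise. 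With these cosmetic fixes your proof is complete.
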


\subsection{Fomin and Stanley's construction}
Suppose $W = S_n$.  We describe the construction of Stanley symmetric functions of Fomin and Stanley \cite{FS}.  Define
$$
\h_k = \sum_{w \text{ decreasing: } \ell(w) = k} A_w.
$$
For example, when $n = 4$, we have
\begin{align*}
\h_0 &= \id \\
\h_1&=A_1 + A_2 + A_3 \\
\h_2&=A_{21} + A_{31} + A_{32} \\
\h_3&=A_{321}
\end{align*}
where $A_{i_1i_2\cdots i_\ell}:= A_{i_1}A_{i_2} \cdots A_{i_\ell}$.  

\begin{lemma}[\cite{FS}]\label{lem:hproduct}
The generating function $\h(t) = \sum_k \h_k t^k$ has the product expansion
$$
\h(t) = (1+t\,A_{n-1})(1+t\,A_{n-2}) \cdots (1+t\,A_1).
$$
\end{lemma}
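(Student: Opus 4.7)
The plan is to prove the lemma by induction on $n$, writing $\h^{(n)}(t)$ for the generating function to track the ambient symmetric group. The base case $n = 1$ is immediate: both sides equal $\id$. For the inductive step I will establish the recursion
$$
\h^{(n)}(t) \;=\; (1 + t\, A_{n-1})\, \h^{(n-1)}(t),
$$
which, iterated, gives the claimed product expansion.

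To see the recursion, partition the decreasing permutations $w \in S_n$ according to whether the (unique) decreasing reduced word of $w$ uses the letter $n-1$. Those that do not are precisely the decreasing permutations of the parabolic subgroup $S_{n-1} \subset S_n$. Those that do must have $n-1$ as the \emph{leftmost} letter of the decreasing reduced word, since it is the largest admissible letter; so they are exactly the permutations of the form $s_{n-1} v$ with $v \in S_{n-1}$ decreasing.

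The only point requiring verification is that prepending $s_{n-1}$ to a reduced expression for $v \in S_{n-1}$ gives a reduced expression for $s_{n-1} v$, i.e.\ that $\ell(s_{n-1} v) = \ell(v) + 1$. This is an immediate one-liner: since $v \in S_{n-1}$ fixes $n$, we have $v^{-1}(n) = n$ while $v^{-1}(n-1) \le n - 1$, so $v^{-1}(n-1) < v^{-1}(n)$, which is the standard criterion for $s_{n-1} v$ to cover $v$ in (left) Bruhat order. Consequently $A_{s_{n-1} v} = A_{n-1}\, A_v$ in $\A_0$, and summing over all decreasing $w \in S_n$ of fixed length $k$ yields
$$
\h^{(n)}_k \;=\; \h^{(n-1)}_k + A_{n-1}\, \h^{(n-1)}_{k-1},
$$
which is precisely the generating function recursion displayed above. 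I do not anticipate any genuine obstacle; the bookkeeping step of showing decreasing words are reduced is handled one letter at a time by the inductive framework, so the entire proof reduces to the trivial length computation for $v \in S_{n-1}$ fixing $n$.
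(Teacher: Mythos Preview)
Your argument is correct. The paper does not supply its own proof of this lemma; it is stated with a citation to Fomin--Stanley and then used immediately, so there is no ``paper's proof'' to compare against.

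Your induction on $n$ is the natural approach and is essentially equivalent to the direct expansion: multiplying out $(1+tA_{n-1})(1+tA_{n-2})\cdots(1+tA_1)$ yields one monomial $t^k A_{i_1}A_{i_2}\cdots A_{i_k}$ for each strictly decreasing sequence $i_1>i_2>\cdots>i_k$ in $[n-1]$, and the only content is that such a word is reduced (so that the product equals $A_w$ rather than $0$). Your length calculation $\ell(s_{n-1}v)=\ell(v)+1$ for $v\in S_{n-1}$, iterated through the induction, is exactly the verification of reducedness one letter at a time. Nothing is missing.
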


\begin{definition}[NilCoxeter algebra] \label{def:stalg}
The Stanley symmetric function $F_w$ is the coefficient of $A_w$ in the product
$$
\h(x_1)\h(x_2)\cdots
$$
\end{definition}

\begin{lemma}[\cite{FS}]\label{lem:finstcommute}
We have 
$$
\h(x) \h(y) = \h(x) \h(y).
$$
Thus for every $k, l$ we have
$$
\h_k \h_l = \h_l \h_k.
$$
\end{lemma}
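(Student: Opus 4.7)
The plan is to deduce $\h(x)\h(y) = \h(y)\h(x)$ from the product formula of Lemma \ref{lem:hproduct} together with a Yang--Baxter style identity in the nilCoxeter algebra $\A_0$. Setting $R_i(t) := 1 + tA_i$, Lemma \ref{lem:hproduct} reads $\h(t) = R_{n-1}(t)R_{n-2}(t)\cdots R_1(t)$, and the proof rests on three local identities among the $R_i$'s: namely (i) $R_i(x)R_i(y) = R_i(x+y)$, immediate from $A_i^2 = 0$; (ii) $R_i(x)R_j(y) = R_j(y)R_i(x)$ for $|i-j|\geq 2$, immediate from the commutation of distant generators; and (iii) the Yang--Baxter relation
$$R_i(x)\,R_{i+1}(x+y)\,R_i(y) = R_{i+1}(y)\,R_i(x+y)\,R_{i+1}(x).$$

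The first step is to verify (iii) by a direct expansion of both sides, using only $A_i^2=0$ and the braid relation $A_iA_{i+1}A_i = A_{i+1}A_iA_{i+1}$; after cancellation both sides collapse to
$$1 + (x+y)(A_i+A_{i+1}) + x(x+y)\,A_iA_{i+1} + y(x+y)\,A_{i+1}A_i + xy(x+y)\,A_iA_{i+1}A_i.$$

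Given (i)--(iii), I would prove $\h(x)\h(y) = \h(y)\h(x)$ by induction on $n$. The inductive hypothesis handles the subalgebra generated by $A_1,\ldots,A_{n-2}$, i.e.\ the commutativity of the truncated generating functions $\h^{(n-1)}(x)$ and $\h^{(n-1)}(y)$. For the inductive step, one must transport the two outer factors $R_{n-1}(x)$ and $R_{n-1}(y)$ past each other through the remaining product: one applies (ii) to commute $R_{n-1}$ freely past $R_j$ with $j \leq n-3$, and invokes the Yang--Baxter relation (iii) on each triple containing an $R_{n-2}$ neighbor, recombining parameters via (i) after each move. The equivalent coefficient form $\h_k\h_l = \h_l\h_k$ then follows by extracting the coefficient of $x^k y^l$ on both sides.

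The main obstacle is the bookkeeping in this sliding procedure: each Yang--Baxter move redistributes the $x$ and $y$ parameters (replacing an $(a, a+b, b)$ pattern with $(b, a+b, a)$), so one must track carefully that after the full sequence of moves the surviving factors reassemble into $\h(y)\h(x)$ with all arguments correctly swapped. Alternatively, one could bypass the induction by viewing the statement $\h_k\h_l=\h_l\h_k$ as a combinatorial identity on pairs of decreasing factorizations of a fixed $w \in S_n$; the same Yang--Baxter moves then yield an explicit weight-preserving involution on such pairs that interchanges the two factor lengths.
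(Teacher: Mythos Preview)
Your approach is correct and follows the same overall architecture as the paper's proof: an induction on $n$ driven by a local three- or four-term braid identity in the $R_i(t)=1+tA_i$, together with the far-commutation (ii). The difference lies only in which local identity is taken as the engine. You use the symmetric Yang--Baxter form
\[
R_i(x)\,R_{i+1}(x+y)\,R_i(y)=R_{i+1}(y)\,R_i(x+y)\,R_{i+1}(x),
\]
whereas the paper uses the asymmetric five-factor identity
\[
R_{i+1}(x)\,R_i(x)\,R_{i+1}(y)=R_{i+1}(y)\,R_i(y)\,R_{i+1}(x)\,R_i(-y)\,R_i(x).
\]
These are equivalent via your relation (i): multiplying the paper's identity on the right by $R_i(y)$ collapses it to the four-term commutation $R_{i+1}(x)R_i(x)R_{i+1}(y)R_i(y)=R_{i+1}(y)R_i(y)R_{i+1}(x)R_i(x)$, and conversely this four-term statement follows from your (iii) by splitting $R_i(y)=R_i(y-x)R_i(x)$, applying Yang--Baxter to $R_i(x)R_{i+1}(y)R_i(y-x)$, and recombining $R_{i+1}(x)R_{i+1}(y-x)=R_{i+1}(y)$.

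What the paper's form buys is exactly the cleanliness you flag as the ``main obstacle'': because its left-hand side $R_{i+1}(x)R_i(x)R_{i+1}(y)$ already matches the pattern that arises after commuting $R_{n-1}(y)$ leftward, one applies it once, the trailing factor $R_{n-2}(-y)$ cancels against the leading $R_{n-2}(y)$ of $\h^{(n-1)}(y)$ after the inductive hypothesis is invoked, and no parameter-tracking is needed. Your Yang--Baxter form requires the insert--apply--recombine manoeuvre at the $R_{n-2}$ site to produce the same effect. Either way the induction closes; the paper's packaging simply hides the bookkeeping you correctly identified.
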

\begin{proof}
We observe that $(1+xA_i)$ and $(1+yA_j)$ commute whenever $|i-j| \geq 2$ and that
$$
(1+ xA_{i+1})(1+ xA_{i})(1+yA_{i+1})
=(1+yA_{i+1})(1+ yA_{i})(1+ xA_{i+1})(1-yA_i)(1+xA_i).
$$
Assuming by induction that the result is true for $S_{n-1}$ we calculate
\begin{align*}
&(1+xA_{n-1})\cdots (1+xA_1)(1+yA_{n-1})\cdots (1+yA_1) \\
&= \left[(1+yA_{n-1})(1+yA_{n-2})(1+xA_{n-1})(1-yA_{n-2})(1+xA_{n-2})\right]\\&(1+xA_{n-3})\cdots (1+xA_1)(1+yA_{n-2})\cdots (1+yA_1) \\
&= (1+yA_{n-1})(1+yA_{n-2})(1+xA_{n-1})(1-yA_{n-2})\left[(1+yA_{n-2})\cdots (1+yA_1)\right]\\& \left[ (1+xA_{n-2})\cdots (1+xA_1) \right]\\
&=(1+yA_{n-1})\cdots (1+yA_1)(1+xA_{n-1})\cdots (1+xA_1).
\end{align*}
\end{proof}

\begin{proof}[Proof of Theorem \ref{thm:stsymm}]
Follows immediately from Definition \ref{def:stalg} and Lemma \ref{lem:finstcommute}.
\end{proof}

The following corollary of Lemma \ref{lem:finstcommute} suggests a way to generalize these constructions to other finite and affine Weyl groups.

\begin{corollary}\label{cor:finFS}
The elements $\h_k$ generate a commutative subalgebra of $\A_0$.
\end{corollary}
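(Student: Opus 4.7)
The plan is to derive this immediately from Lemma \ref{lem:finstcommute}. That lemma already establishes that $\h_k \h_l = \h_l \h_k$ for all nonnegative integers $k, l$, so the generators of the subalgebra in question pairwise commute.

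First I would let $\Bo \subset \A_0$ denote the (unital) subalgebra generated by $\{\h_k \mid k \geq 0\}$. A general element of $\Bo$ is a $\Z$-linear combination of monomials of the form $\h_{k_1}\h_{k_2}\cdots \h_{k_r}$. Using Lemma \ref{lem:finstcommute} repeatedly, any such monomial can be rewritten in a canonical sorted form, say $\h_{k_{\sigma(1)}}\cdots \h_{k_{\sigma(r)}}$ with $k_{\sigma(1)} \leq \cdots \leq k_{\sigma(r)}$. Then given two monomials $M = \h_{k_1}\cdots \h_{k_r}$ and $N = \h_{l_1}\cdots \h_{l_s}$, I would show $MN = NM$ by repeatedly applying the pairwise commutation to migrate each $\h_{l_j}$ past each $\h_{k_i}$; by bilinearity, commutativity extends from monomials to arbitrary elements of $\Bo$.

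There is essentially no obstacle: the corollary is the formal consequence that a subalgebra generated by pairwise commuting elements is itself commutative. The content of the statement — and the only nontrivial input — has already been supplied by Lemma \ref{lem:finstcommute}, whose proof rested on the local identity $(1+xA_{i+1})(1+xA_i)(1+yA_{i+1}) = (1+yA_{i+1})(1+yA_i)(1+xA_{i+1})(1-yA_i)(1+xA_i)$ together with the product expansion of $\h(t)$ from Lemma \ref{lem:hproduct}. Therefore the proof reduces to one sentence invoking Lemma \ref{lem:finstcommute} and the elementary fact about subalgebras generated by commuting sets.
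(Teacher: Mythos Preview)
Your proposal is correct and matches the paper's approach exactly: the corollary is stated without proof in the paper, as it is an immediate consequence of Lemma~\ref{lem:finstcommute} (which gives $\h_k\h_l=\h_l\h_k$) together with the elementary fact that a subalgebra generated by pairwise commuting elements is commutative. If anything, you have written out more detail than is needed---a one-line appeal to Lemma~\ref{lem:finstcommute} suffices.
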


We call the subalgebra of Corollary \ref{cor:finFS} the {\it Fomin-Stanley} sublagebra of $\A_0$, and denote it by $\B$.  As we shall explain, the combinatorics of Stanley symmetric functions    is captured by the algebra $\B$, and the information can be extracted by ``picking a basis''.

\subsection{A conjecture}
We take $W$ to be an arbitrary Weyl group.  For basic facts concerning the exponents of $W$, we refer the reader to \cite{Hum}.
The following conjecture was made by the author and Alex Postnikov.  Let $(R^+, \prec)$ denote the partial order on the positive roots of $W$ given by $\alpha \prec \beta$ if $\beta - \alpha$ is a positive sum of simple roots, and let $J(R^+,\prec)$ denote the set of upper order ideals of $(R^+,\prec)$.

\begin{conjecture}\label{conj:LP}
The (finite) nilCoxeter algebra $\A_0$ contains a graded commutative subalgebra $\B'$ satisfying:
\begin{enumerate}
\item
Over the rationals, the algebra $\B' \otimes_\Z \Q$ is generated by homogeneous elements $\h_{i_1}, \h_{i_2}, \ldots, \h_{i_r} \in \A_0$ with degrees $\deg(\h_{i_j}) = i_j$ given by the exponents of $W$.
\item
The Hilbert series $P(t)$ of $\B'$ is given by 
$$
P(t) = \sum_{I \in J(R^+,\prec)} t^{|I|}
$$
In particular, the dimension of $\B'$ is a the generalized Catalan number for $W$ (see for example \cite{FR}).
\item 
The set $\B'$ has a homogeneous basis $\{b_I \mid I \in J(R^+,\prec)\}$ consisting of elements which are nonnegative linear combinations of the $A_w$.
\item
The structure constants of the basis $\{b_I\}$ are positive.
\end{enumerate}
\end{conjecture}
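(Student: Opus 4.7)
The plan is to build $\B'$ by mimicking the Fomin-Stanley construction of Definition \ref{def:stalg} uniformly across types, and then to extract the four properties in roughly that order of increasing difficulty. First, for each exponent $d$ of $W$ I would write down a candidate generator $\h_d \in \A_0$ as an explicit nonnegative integral combination of basis elements $A_w$ of degree $d$. In type $A$ the Fomin-Stanley sum over decreasing permutations works; for general $W$ the natural candidates come from sums over certain ``cyclically decreasing'' or support-indexed elements, but an alternative uniform route is to first construct them inside the affine nilHecke algebra and then project down to $\A_0$.

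The cleanest handle on commutativity is to realize $\B'$ as the image of the (commutative) Peterson centralizer subalgebra $\Pet \subset \afA$ discussed in later sections of the paper. Concretely, I would define an algebra map $\aNC \to \A_0$ which kills $A_0$, check that its restriction to $\Pet \cap \aNC$ lands in $\A_0$, and take $\B'$ to be this image. Since Peterson's elements mutually commute, commutativity of $\B'$ would be automatic, and the generators $\h_d$ would be produced as images of natural degree-$d$ Peterson elements indexed by a fundamental system matching the exponents of $W$.

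Once commutativity and the correct generating set are in place, matching the Hilbert series $P(t) = \sum_{I \in J(R^+,\prec)} t^{|I|}$ becomes an algebraic computation: one needs to pin down the ideal of relations among the $\h_d$ inside $\A_0$. The simplest approach is to verify that $\B'$ contains a unique top-degree element proportional to $A_{w_0}$, to show that $\B'$ vanishes above degree $|R^+|$, and to proceed by induction on rank using the restriction of $\B'$ to the parabolic nilCoxeter subalgebras associated to maximal parabolic subgroups of $W$. The Catalan numerology should then be forced by combining the exponent data with parabolic induction and the known root-poset recursions.

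The main obstacle is clause (4), positivity of structure constants for a basis $\{b_I\}$ indexed by order ideals of the root poset. Even producing the basis $b_I$ canonically is delicate: in type $A$ the $b_I$ correspond to $321$-avoiding permutations indexed by antichains, and positivity of the product $b_I \, b_J = \sum c_{IJ}^K b_K$ ultimately reduces to an Edelman-Greene-like combinatorial identity. A uniform proof would presumably require a type-free analog of EG insertion on reduced words, which does not currently exist. I would therefore expect clauses (1)--(3) to be reachable by the Peterson-quotient strategy above, but clause (4) to require either a genuinely new positive combinatorial model or a geometric interpretation of $\B'$ --- perhaps as the cohomology of some subvariety of $G/B$ whose Schubert-like classes realize the $b_I$.
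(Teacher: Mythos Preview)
This statement is a \emph{conjecture}, not a theorem: the paper does not contain a proof of it in general, and you should not expect your outline to close into one. What the paper actually does is (i) construct a specific candidate $\B$ for $\B'$ as the image $\kappa(\aB)$ of the affine Fomin-Stanley algebra under the projection $\kappa:\aNC\to\A_0$ (Section~\ref{sec:finiteFS}), (ii) verify all four clauses for $W=S_n$ via Edelman--Greene combinatorics (Proposition~\ref{prop:LP}), (iii) prove clause~(1) for general $W$ by invoking the geometry of $\Gr_G$ (Proposition~\ref{prop:gen}), and (iv) leave clauses~(2)--(4) open, restating them as Conjecture~\ref{conj:finite}.

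Your Peterson-quotient strategy is therefore precisely the paper's own construction, and the observation that commutativity of $\B'$ is inherited from $\Pet$ is correct (the map $\kappa$ really is a ring homomorphism, since a product $A_wA_v$ with $w\notin W$ or $v\notin W$ is either zero or equals some $A_{wv}$ with $wv\notin W$). One small correction: $\Pet$ lives in $\afA$, not $\aNC$; what you want to project is $\aB=\phi_0(\Pet)\subset\aNC$.

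The genuine gap is in your assessment of difficulty. You suggest that clauses (1)--(3) are ``reachable'' by parabolic induction and root-poset recursions, with only (4) requiring new ideas. In fact even clause~(2), the Hilbert series identification, is open outside type $A$: the relations among the $\h_d$ inside $\A_0$ are not known in general, and no parabolic-induction argument of the kind you sketch has been made to work. Likewise for clause~(3), producing a canonical nonnegative basis $\{b_I\}$ indexed by antichains in $(R^+,\prec)$ is already unresolved; in type $A$ it comes from the noncommutative Schur functions $\s_\la$, which rely on Edelman--Greene theory with no known uniform analogue. So your diagnosis that a type-free EG insertion (or a geometric model) is the missing ingredient is right, but it is needed already for (2) and (3), not only for (4).
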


In the sequel we shall give an explicit construction of a commutative subalgebra $\B$ and provide evidence that it satisfies Conjecture \ref{conj:LP}.  

Suppose $W = S_n$.  We show that $\B'=\B$ satisfies Conjecture \ref{conj:LP}.  The upper order ideals $I$ of $(R^+,\prec)$ are naturally in bijection with Young diagrams fitting inside the staircase $\delta_{n-1}$.  For each partition $\la$ we define the noncommutative Schur functions $\s_\la \in \B$, following Fomin and Greene \cite{FG}, by writing $s_\la$ as a polynomial in the $h_i$, and then replacing $h_i$ by $\h_i$.  (We set $\h_k = 0$ if $k \geq n$, and $\h_0 = 1$.)  Fomin and Greene show that $\s_\la$ is a nonnegative linear combination of the $A_w$'s (but it will also follow from our use of the Edelman-Greene correspondence below).

\begin{prop}[Lam - Postnikov]\label{prop:LP}
The set $\{\s_\la \mid \la \subseteq \delta_{n-1}\} \subset \B$ is a basis for $\B$.
\end{prop}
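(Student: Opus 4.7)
My plan is to construct a surjective ring homomorphism $\phi : \La \twoheadrightarrow \B$ with $h_k \mapsto \h_k$ (well-defined by Corollary~\ref{cor:finFS}, with $\h_k = 0$ for $k \geq n$); by Jacobi--Trudi (Theorem~\ref{thm:JT}), $\phi(s_\la) = \s_\la$, so $\{\s_\la\}_{\la}$ automatically spans $\B$, and I only need to identify which $\s_\la$ vanish and then prove the surviving ones are linearly independent.

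To pin down vanishing, I would expand $\prod_{i \geq 1} \h(x_i)$ in two ways. Commutativity of the $\h_k$ makes the product a formal symmetric series in the $x_i$ with coefficients in $\B$, and grouping by partition type yields $\prod_i \h(x_i) = \sum_\la m_\la(x)\, \h_\la$; applying $\mathrm{id} \otimes \phi$ to the Cauchy identity $\sum_\la m_\la \otimes h_\la = \sum_\la s_\la \otimes s_\la$ converts this into $\sum_\la s_\la(x)\, \s_\la$. On the other hand, Definition~\ref{def:stalg} gives $\prod_i \h(x_i) = \sum_w F_w(x)\, A_w$, and Corollary~\ref{cor:stpos} expands $F_w = \sum_\la \alpha_{w\la} s_\la$. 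Equating and extracting the $s_\mu(x)$-coefficient via the Hall inner product produces
\[
  \s_\mu = \sum_{w \in S_n} \alpha_{w\mu}\, A_w.
\]
By Lemma~\ref{lem:EGshape}, $\alpha_{w\mu} = 0$ unless $\mu \subseteq \delta_{n-1}$, so $\s_\mu = 0$ outside the staircase and $\{\s_\la\}_{\la \subseteq \delta_{n-1}}$ spans $\B$.

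For linear independence, suppose $\sum_{\la \subseteq \delta_{n-1}} c_\la \s_\la = 0$ in $\A_0$. Extracting each $A_w$-coefficient yields $\sum_\la c_\la \alpha_{w\la} = 0$ for all $w \in S_n$; in other words, the linear functional $c^* : V \to \Q$ on $V := \bigoplus_{\la \subseteq \delta_{n-1}} \Q\, s_\la$ defined by $c^*(s_\la) = c_\la$ vanishes on every $F_w$. I will then show the $F_w$'s span $V$: by Proposition~\ref{prop:dominant}, $F_w$ has leading monomial $m_{\la(w)}$ in dominance order, and since $\delta_{n-1}$ is self-conjugate, for each $\la \subseteq \delta_{n-1}$ the sequence $(\la'_1, \la'_2, \ldots, \la'_{n-1}, 0)$ satisfies the code constraint $c_i \leq n-i$ and hence equals $c(w_\la^{-1})$ for some $w_\la \in S_n$ with $\la(w_\la) = \la$. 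The $C_n$ functions $\{F_{w_\la}\}$ are unitriangular in the monomial basis and hence linearly independent, lie in $V$ (by Corollary~\ref{cor:stpos} together with Lemma~\ref{lem:EGshape}), and match $\dim V = C_n$, so they form a basis of $V$. It follows that $c^* \equiv 0$ and every $c_\la = 0$.

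The main technical obstacle is reconciling the two descriptions of $V$: one must simultaneously place each $F_w$ inside $V$ via its Schur expansion (Edelman--Greene) and control the $F_w$-span from its $m$-leading terms (Proposition~\ref{prop:dominant}), with the self-conjugacy of $\delta_{n-1}$ ensuring that the set of achievable $\la(w)$'s is precisely $\{\la \subseteq \delta_{n-1}\}$. Once the counts collide at $C_n$ on both sides, the duality between the expansions of $F_w$ and $\s_\la$ delivers both the spanning and the independence in one stroke.
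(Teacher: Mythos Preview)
Your argument is correct and follows essentially the same route as the paper: both use the Cauchy identity to obtain $\s_\mu = \sum_{w} \alpha_{w\mu}\,A_w$, invoke Lemma~\ref{lem:EGshape} to kill $\s_\mu$ for $\mu \not\subseteq \delta_{n-1}$, and then, for linear independence, exhibit for each $\la \subseteq \delta_{n-1}$ a permutation $w_\la \in S_n$ with $\la(w_\la)=\la$ so that $(F_{w_\la})_\la$ is unitriangular by Proposition~\ref{prop:dominant}. The only cosmetic difference is that the paper builds $w$ directly with $c(w)=\la$ (relying on the fact that for such dominant permutations $\la(w)=\la$), whereas you more transparently set $c(w_\la^{-1})=\la'$ and appeal to the self-conjugacy of $\delta_{n-1}$; your dual formulation via the functional $c^*$ and the dimension count $|\{\la \subseteq \delta_{n-1}\}| = C_n$ is just the contragredient of the paper's triangularity argument.
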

\begin{proof}
Let $\ip{.,.}:\A_0 \otimes \A_0 \to \Z$ be the inner product defined by extending bilinearly $\ip{A_w,A_v} = \delta_{wv}$.
Rewriting the definition of Stanley symmetric functions and using the Cauchy identity, one has
\begin{align*}
F_w &= \sum_\la \ip{\h_{\la_1}\h_{\la_2}\cdots, A_w}m_\la\\
&=\sum_\la \ip{\s_\la, A_w}s_\la.
\end{align*}
It follows that the coefficient of $A_w$ in $\s_\la$ is $\alpha_{w\la}$, the coefficient of $s_\la$ in $F_w$.  By Lemma \ref{lem:EGshape}, we have $\s_\la = 0$ unless $\la \subseteq \delta_{n-1}$.  It remains to show that this set of $\s_\la$ are linearly independent.  To demonstrate this, we shall find, for each $\la \in \delta_{n-1}$, some $w \in S_n$ such that $F_w = s_\la + \sum_{\mu \prec \la} \alpha_{w\mu} s_\mu$.

Since $s_\la = m_\la + \sum_{\mu \prec \la} K_{\la\mu} m_\mu$ (where the $K_{\la\mu}$ are the Kostka numbers), by Proposition \ref{prop:dominant} it suffices to show that the permutation $w$ with code $c(w) = \la$ lies in $S_n$.  Let $\la = (\la_1\geq\la_2\geq \ldots \geq\la_{\ell} > 0)$.  Then define $w$ recursively by $w(i) = \min\{j > \la_i \mid j \notin \{w(1),w(2),\ldots,w(i-1)\}$.  Since $\la_i \leq n-i$, we have $w(i) \leq n$.  By construction $w \in S_n$ and has code $c(w) = \la$.
\end{proof}

\begin{example}\label{ex:S4}
Let $W = S_4$.  Write $A_{i_1i_2\cdots i_k}$ for $A_{s_{i_1}s_{i_2}\cdots s_{i_k}}$.

Then we have
\begin{align*}
\begin{array}{ll}
\s_\emptyset = 1 &
\s_1 = A_1 + A_2 + A_3 \\
\s_{11} = A_{12} + A_{23} + A_{13} & \s_2 = A_{21}+A_{32}+A_{31} \\
\s_{111} = A_{123}  & \s_3 = A_{321} \\
\s_{21} = A_{213}+A_{212}+A_{323}+A_{312}&
\s_{211}= A_{1323} + A_{1213} \\ 
\s_{22}= A_{2132} 
&\s_{31}= A_{3231}+A_{3121} \\
\s_{221}=A_{23123} & \s_{311}=A_{32123} \\
\s_{32}=A_{32132}&
\s_{321}=A_{321323}
\end{array}
\end{align*}
\end{example}

\subsection{Exercises and Problems}
\label{ssec:algebraprob}
\begin{enumerate}
\item 
In Example \ref{ex:S4} every $A_w$ occurs exactly once, except for $w = s_1s_3$.  Explain this using Theorem \ref{thm:CK}.
\item (Divided difference operators)
Let $\partial_i$, $1 \leq i \leq n-1$ denote the divided difference operator acting on polynomials in $x_1,x_2,\ldots,x_n$ by
$$
\partial_i \cdot f(x_1,\ldots,x_n) = \frac{f(x_1,x_2,\ldots,x_n) - f(x_1,\ldots,x_{i-1},x_{i+1},x_i,x_{i+2},\ldots,x_n)}{x_i - x_{i+1}}.
$$
Show that $A_i \mapsto \partial_i$ generates an action of the nilCoxeter algebra $\A_0$ for $S_n$ on polynomials.  Is this action faithful?
\item (Center)
What is the center of $\A_0$ and of $\aNC$?
%\item
%The representation theory of $\A_0$ for $W = S_n$ is described in \cite{Kho}.  What is the situation in other types, and in the affine case?
\item
Does an analogue of Conjecture \ref{conj:LP} also hold for finite Coxeter groups which are not Weyl groups?
\item 
How many terms (counted with multiplicity) are there in the elements $\s_\la$ of Proposition \ref{prop:LP}?  This is essentially the same problem as (\ref{prob:enumerateEG}) in Section \ref{ssec:EGprob} (why?).
\end{enumerate}

\section{The affine nilHecke ring}\label{sec:affinenilHecke}
Kostant and Kumar \cite{KK} introduced a nilHecke ring to study the topology of Kac-Moody flag varieties.  Let $W$ be a Weyl group and $\aW$ be the corresponding affine Weyl group.

\subsection{Definition of affine nilHecke ring}
In this section we study the affine nilHecke ring $\afA$ of Kostant and Kumar \cite{KK}.  Kostant and Kumar define the nilHecke ring in the Kac-Moody setting.  The ring $\afA$ below is a ``small torus'' variant of their construction for the affine Kac-Moody case.  

The affine nilHecke ring $\afA$ is the ring with a $1$
given by generators $\{A_i \mid i \in \aI \} \cup \{\lambda
\mid \lambda \in P \}$ and the relations
\begin{align}\label{eq:aila}
A_i \,\lambda &= (s_i \cdot \lambda)\, A_i +
\ip{\alpha_i^\vee,\la}\cdot1 & \mbox{for $\la \in P$,} \\ 
A_i\, A_i &= 0, \\ 
\label{eq:aiaj}
(A_iA_j)^m &= (A_jA_i)^m & \mbox{if $(s_is_j)^m = (s_js_i)^m$.}
\end{align}
where the ``scalars'' $\la \in P$ commute with other
scalars.  Thus $\afA$ is obtained from the affine nilCoxeter algebra $\aNC$ by adding the scalars $P$.  The finite nilHecke ring is the subring $\A$ of $\afA$
generated by $\{A_i \mid i \in \fI \} \cup \{\lambda \mid \lambda \in
P\}$. 

Let $w \in \aW$ and let $w = s_{i_1} \cdots s_{i_l}$ be a reduced
decomposition of $w$.  Then $A_w := A_{i_1} \cdots A_{i_l}$ is a
well defined element of $\afA$, where $A_\id = 1$.

Let $S = \Sym(P)$ denote the symmetric algebra of $P$.  The following 
basic result follows from \cite[Theorem 4.6]{KK}, and can be proved directly from the definitions.

\begin{lemma}\label{lem:Abasis}
The set $\{A_w \mid w \in \aW\}$ is an
$S$-basis of $\afA$.
\end{lemma}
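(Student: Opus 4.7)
The plan is to prove spanning and $S$-linear independence separately, the latter via a faithful representation on a skew group algebra. For spanning, the commutation relation \eqref{eq:aila} lets me move any scalar $\la \in P$ past each $A_i$ at the cost of producing a term of strictly lower $A$-degree; induction on the total number of $A$-generators in a monomial then shows that every element of $\afA$ can be written as $\sum f_{(i_\bullet)} A_{i_1} A_{i_2} \cdots A_{i_r}$ with $f_{(i_\bullet)} \in S$ and each $i_j \in \aI$. By Matsumoto's theorem, any two reduced words for $w \in \aW$ are connected by braid moves, so relation \eqref{eq:aiaj} guarantees that $A_w := A_{i_1} \cdots A_{i_{\ell(w)}}$ is well-defined on reduced expressions. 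For a non-reduced word, the Word Property of Coxeter groups lets me use braid moves to produce two equal adjacent generators, whereupon $A_iA_i = 0$ collapses the product to zero. Hence $\{A_w \mid w \in \aW\}$ spans $\afA$ over $S$.

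For linear independence, let $Q = \Frac(S)$, on which $\aW$ acts through its level-zero action on $P$. Form the skew group ring $Q_{\aW} := Q \rtimes \aW$, a free left $Q$-module on the basis $\{v : v \in \aW\}$ with multiplication $(f \cdot w)(g \cdot v) = f\,(w \cdot g)\,(wv)$. I would define an additive map $\phi : \afA \to Q_{\aW}$ by $\phi(\la) = \la \cdot \id$ for $\la \in P$ and $\phi(A_i) = \alpha_i^{-1}(\id - s_i)$. A direct computation using $s_i \cdot \alpha_i = -\alpha_i$ and the fact that $\aW$ acts by Coxeter group automorphisms verifies \eqref{eq:aila}, $\phi(A_i)^2 = 0$, and the braid relations \eqref{eq:aiaj}, so $\phi$ extends to a well-defined ring homomorphism.

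Writing $\phi(A_w) = \sum_{v \in \aW} c_{w,v}\cdot v$ with $c_{w,v} \in Q$, I would prove by induction on $\ell(w)$ (taking a reduced factorization $w = s_i u$ with $\ell(w) = \ell(u)+1$ and computing $\phi(A_w) = \phi(A_i)\phi(A_u)$) that \textup{(i)} $c_{w,v} = 0$ unless $v \leq w$ in the Bruhat order, and \textup{(ii)} $c_{w,w} = -\alpha_i^{-1} \cdot s_i(c_{u,u})$, which is a nonzero element of $Q$. The key input is the Lifting Property of Bruhat order, which controls how the support of $\phi(A_u)$ can grow under left multiplication by $\phi(A_i) = \alpha_i^{-1}(\id - s_i)$. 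Since $\{v : v \in \aW\}$ is by construction a $Q$-basis of $Q_{\aW}$, this Bruhat-triangularity with nonvanishing diagonal forces the $\phi(A_w)$ to be $Q$-linearly independent; a fortiori the $A_w$ are $S$-linearly independent in $\afA$. Combined with the spanning step, this yields the lemma. I expect the main obstacle to be carefully establishing the triangularity and the non-vanishing of the leading coefficient $c_{w,w}$, as the verifications of the defining relations and the spanning argument are essentially bookkeeping.
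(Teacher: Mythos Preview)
Your argument is correct. The paper does not actually prove this lemma but defers to Kostant--Kumar \cite[Theorem~4.6]{KK}; your approach is essentially the standard one underlying that reference, namely realizing $\afA$ inside the twisted group algebra $\Frac(S)\rtimes\aW$ via $A_i\mapsto\alpha_i^{-1}(1-s_i)$ and exploiting Bruhat upper-triangularity of $\phi(A_w)$ in the group-element basis. In fact your map $\phi$ is the inverse of the embedding $s_i\mapsto 1-\alpha_i A_i$ that the paper constructs immediately afterward in Lemma~\ref{lem:whomo}: the paper \emph{uses} Lemma~\ref{lem:Abasis} to conclude that embedding is injective, while you run the logic the other way and use the (independently verified) homomorphism into $Q_{\aW}$ to obtain linear independence of the $A_w$.

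One small caveat: the braid relations for the Demazure-type elements $\alpha_i^{-1}(1-s_i)$ in $Q_{\aW}$ do not follow merely from ``$\aW$ acting by Coxeter automorphisms''; they require a genuine rank-two computation analogous to the one the paper carries out in the proof of Lemma~\ref{lem:whomo}. This is classical and poses no difficulty, but you should cite it or do it rather than suggest it is automatic.
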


\begin{lemma}\label{lem:whomo}
The map $\aW \mapsto \afA$ given by 
$s_i \mapsto 1 - \alpha_i A_i \in \afA$
is a homomorphism.
\end{lemma}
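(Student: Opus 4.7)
The plan is to check that the elements $\tilde{s}_i := 1 - \alpha_i A_i \in \afA$ satisfy the Coxeter relations $\tilde{s}_i^2 = 1$ and $(\tilde{s}_i \tilde{s}_j)^{m_{ij}} = (\tilde{s}_j \tilde{s}_i)^{m_{ij}}$ defining $\aW$. This suffices because $\aW$ is given by presentation via these relations.

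For the quadratic relation, I would apply \eqref{eq:aila} with $\lambda = \alpha_i$, which yields $A_i \alpha_i = -\alpha_i A_i + 2$. Then
\[
\tilde{s}_i^2 = 1 - 2\alpha_i A_i + \alpha_i (A_i\alpha_i) A_i = 1 - 2\alpha_i A_i + \alpha_i(-\alpha_i A_i + 2)A_i = 1
\]
after using $A_i^2 = 0$. A similar one-line computation gives the auxiliary identity $\tilde{s}_i\,\lambda = (s_i\cdot\lambda)\,\tilde{s}_i$ for all $\lambda \in P$, which expresses that conjugation by $\tilde{s}_i$ enacts the level-zero Weyl action on scalars.

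For the braid relations the cleanest route is localization. Let $\Frac(S)$ be the fraction field of $S = \Sym(P)$, and set $\afA_\Frac := \afA \otimes_S \Frac(S)$. I would construct a $\Frac(S)$-algebra isomorphism
\[
\afA_\Frac \;\xrightarrow{\sim}\; \Frac(S) \rtimes \aW, \qquad A_i \longmapsto \tfrac{1}{\alpha_i}(1-s_i),
\]
with inverse sending $s_i \in \aW$ to $\tilde{s}_i$. Well-definedness of the forward map reduces the defining relations of $\afA$ to easy identities in the smash product: relation \eqref{eq:aila} becomes $(\lambda - s_i\cdot\lambda)/\alpha_i = \ip{\alpha_i^\vee,\lambda}$, the nilpotency $A_i^2 = 0$ becomes $(1-s_i)(1+s_i)/\alpha_i^2 = 0$, and the braid-type relations \eqref{eq:aiaj} reduce to the Coxeter braid relations holding in $\aW$. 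Since $\tilde{s}_i$ is the image of $s_i$ under the inverse map, and $\afA \hookrightarrow \afA_\Frac$ is injective by the free $S$-basis of Lemma \ref{lem:Abasis}, every Coxeter relation satisfied by the $s_i$ in $\aW$ pulls back to hold among the $\tilde{s}_i$ in $\afA$.

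The main obstacle is verifying the braid-type relations \eqref{eq:aiaj} for the elements $\tfrac{1}{\alpha_i}(1-s_i)$ inside $\Frac(S) \rtimes \aW$; this is a rank-two computation handled case-by-case in $m_{ij} \in \{2,3,4,6\}$, and is short for the type $\tilde{A}_{n-1}$ application where only $m_{ij} \in \{2,3\}$ occur. A more elementary alternative, avoiding the localization, is to prove $(\tilde{s}_i\tilde{s}_j)^{m_{ij}} = (\tilde{s}_j\tilde{s}_i)^{m_{ij}}$ by direct expansion inside $\afA$, iteratively using the scalar-pushing identity above to move all scalars to the left and collect $A$-terms; this is conceptually self-contained but combinatorially heavier, especially for $m_{ij} \in \{4,6\}$.
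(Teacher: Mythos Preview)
Your approach is correct but genuinely different from the paper's. The paper argues entirely inside $\afA$: after checking $\tilde s_i^{\,2}=1$ exactly as you do, it verifies the braid relations by direct expansion. For $m_{ij}=2$ this is immediate since $\alpha_j$ commutes with $A_i$; for $m_{ij}=3$ the paper writes out $\tilde s_i\tilde s_j\tilde s_i$ explicitly in the $A_w$-basis and observes that the resulting expression is symmetric in $i$ and $j$. The cases $m_{ij}\in\{4,6\}$ are then left as an exercise.

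Your localization route trades this for the same case-by-case verification in a different ring: you must still check that the Demazure elements $\tfrac{1}{\alpha_i}(1-s_i)$ satisfy the $A_i$-braid relations inside $\Frac(S)\rtimes\aW$, so no computational work is actually saved (your statement that these ``reduce to the Coxeter braid relations holding in $\aW$'' is too quick, as you yourself acknowledge two sentences later). What you gain is a conceptual explanation---$\afA_\Frac$ \emph{is} the smash product and $\tilde s_i$ is literally $s_i$ under this identification---which is the standard Kostant--Kumar viewpoint and makes Lemma~\ref{lem:waction} transparent as well. One point worth tightening: the phrase ``with inverse sending $s_i\mapsto\tilde s_i$'' must not be read as \emph{defining} the inverse on generators, since that would presuppose the lemma. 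Rather, once the forward map $\phi$ is well-defined, bijectivity follows from triangularity of $\phi(A_w)$ in the Bruhat-ordered basis $\{v\}_{v\le w}$; the inverse then exists automatically and necessarily carries $s_i$ to $\tilde s_i$, whence the Coxeter relations transfer.
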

\begin{proof}
We calculate that
\begin{align*}
s_i^2 &= 1 -2\al_i A_i + \al_i A_i \al_i A_i \\
&= 1 - 2\al_i A_i + \al_i (-\al_iA_i + 2)A_i &\mbox{using \eqref{eq:aila}}\\
&= 1 & \mbox{using $A_i^2 = 0$.}
\end{align*}
If $A_i A_j = A_i A_j$ then $\al_i A_j = A_j \al_i$ so it is easy to see that $s_is_j = s_js_i$.  Suppose $A_i A_j A_i = A_j A_i A_j$.  Then $a_{ij} = a_{ji} = -1$ and we calculate that
\begin{align*}
s_i s_j s_i & = (1-\al_iA_i)(1-\al_j A_j)(1-\al_i A_i)  \\
&=  1 - (2\al_iA_i+\al_jA_j) + (\al_iA_i \al_j A_j +\al_i A_i\al_i A_i + \al_jA_j \al_iA_i)
- \al_iA_i \al_j A_j \al_i A_i \\
&= 1 - (2\al_iA_i+\al_jA_j )+ (2\al_iA_j+2\al_iA_i + 2\al_j A_i)  + \al_i(\al_i+\al_j)A_i A_j + \\ &\;\;\;\;\;\al_j(\al_i+\al_j)A_j A_i - (\al_i A_i + \al_i (\al_i + \al_j) \al_jA_i A_j A_i) \\
&= 1 + (2 \al_i A_j + 2 \al_j A_i -\al_iA_i -\al_j A_j ) + \al_i(\al_i+\al_j)A_i A_j +\\ &\;\;\;\;\;\al_j(\al_i+\al_j)A_j A_i -  \al_i (\al_i + \al_j) \al_j A_i A_j A_i.
\end{align*}
Since the above expression is symmetric in $i$ and $j$, we conclude that $s_is_js_i = s_js_is_j$.
\end{proof}

\begin{exercise}
Complete the proof of Lemma \ref{lem:whomo} for $(A_iA_j)^2= (A_jA_i)^2$ and $(A_iA_j)^3= (A_jA_i)^3$.
\end{exercise}

It follows from Lemma \ref{lem:Abasis} that the map of Lemma \ref{lem:whomo} is an isomorphism onto its image.  Abusing notation, we write $w \in \afA$ for the element in the
nilHecke ring corresponding to $w \in \aW$ under the map of Lemma \ref{lem:whomo}.  Then
$\aW$ is a basis for $\afA \otimes_S \Frac(S)$ over $\Frac(S)$ (not over $S$
since $A_i = \frac{1}{\alpha_i}(1-s_i)$).

\begin{lemma}\label{lem:waction}
Suppose $w \in \aW$ and $s \in S$.  Then $w s = (w\cdot s) w$.
\end{lemma}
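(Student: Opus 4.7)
The plan is to reduce the claim to the case where $w = s_i$ is a simple generator and $s = \lambda \in P$ is a generator of $S$, then verify that case by a direct computation using relation \eqref{eq:aila} and the explicit embedding $s_i \mapsto 1 - \alpha_i A_i$ of Lemma \ref{lem:whomo}.

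For the reduction, I would first observe that both sides of the identity $w s = (w \cdot s) w$ are $\Z$-bilinear in $w$ (as an element of the group algebra) and $s$, and that the right-hand side is multiplicative in $s$: if $ws_1 = (w\cdot s_1)w$ and $ws_2 = (w\cdot s_2)w$, then $ws_1s_2 = (w\cdot s_1)ws_2 = (w\cdot s_1)(w\cdot s_2) w = (w \cdot (s_1s_2))w$, using that $S$ is commutative and that $w$ acts on $S$ by ring automorphisms. Hence it suffices to treat $s = \lambda$ for $\lambda \in P$. Similarly, if the identity $w_1 \lambda = (w_1 \cdot \lambda) w_1$ and $w_2 \mu = (w_2\cdot \mu)w_2$ hold for all $\lambda,\mu \in P$, then for any $\lambda$, $(w_1 w_2)\lambda = w_1 (w_2\cdot\lambda)w_2 = (w_1\cdot(w_2\cdot\lambda))w_1 w_2 = ((w_1w_2)\cdot\lambda)(w_1w_2)$, so the identity propagates to all products. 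Thus it is enough to check $s_i \lambda = (s_i \cdot \lambda)s_i$ for each simple generator $s_i$ and each $\lambda \in P$.

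For the base case, I would simply unwind both sides. Using $s_i = 1 - \alpha_i A_i$ and relation \eqref{eq:aila}, the left-hand side becomes
\begin{align*}
s_i \lambda &= \lambda - \alpha_i A_i \lambda \\
&= \lambda - \alpha_i\bigl((s_i\cdot\lambda) A_i + \ip{\alpha_i^\vee,\lambda}\bigr) \\
&= \bigl(\lambda - \ip{\alpha_i^\vee,\lambda}\alpha_i\bigr) - \alpha_i(s_i\cdot\lambda)A_i \\
&= (s_i\cdot\lambda) - (s_i\cdot\lambda)\alpha_i A_i,
\end{align*}
where the last equality uses the definition $s_i\cdot\lambda = \lambda - \ip{\alpha_i^\vee,\lambda}\alpha_i$ and the commutativity of scalars. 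The right-hand side is $(s_i\cdot\lambda)s_i = (s_i\cdot\lambda)(1 - \alpha_i A_i) = (s_i\cdot\lambda) - (s_i\cdot\lambda)\alpha_i A_i$, matching the left-hand side.

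There is no real obstacle here; the only thing to be careful about is the inductive step on the length of $w$, which relies on the fact that the embedding of Lemma \ref{lem:whomo} is a genuine homomorphism (so $w_1 w_2$ in $\aW$ really does map to the product of the images), and on the Weyl-group identity $(w_1w_2)\cdot\lambda = w_1\cdot(w_2\cdot\lambda)$. Both are available from the preceding material.
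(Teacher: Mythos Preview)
Your proof is correct and follows the same approach as the paper: reduce to $w=s_i$ and $s=\lambda\in P$, then compute directly using $s_i=1-\alpha_i A_i$ and relation \eqref{eq:aila}. The paper states the reduction without justification, whereas you spell out the multiplicativity arguments in $s$ and $w$; otherwise the computations are identical.
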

\begin{proof}
It suffices to establish this for $w = s_i$, and $s = \la \in P$.  We calculate
\begin{align*}
s_i \la &= (1-\al_i A_i)\la \\
&= \la - \al_i (s_i \cdot \la) A_i - \al_i \ip{\al_i^\vee,\la}\\
&= (s_i \cdot \la) - (s_i\cdot \la) \al_i A_i \\
& = (s_i \cdot \la) s_i.
\end{align*}
\end{proof}

%For later use, we note the following straightforward result, whose
%proof we omit; see~\cite[Proposition 4.30]{KK}.
%\begin{lemma}
%\label{lem:chev} Let $w \in \aW$ and $\lambda \in S$ be of degree
%1. Then
%\[
%A_w \lambda = (w \cdot \lambda)A_w + \sum_{w\,r_\alpha  \lessdot
%w}\ip{\lambda, \alpha^\vee} A_{w\,r_\alpha},
%\]
%where $\alpha$ is always taken to be a positive root of $\aW$.
%Here $\lessdot$ denotes a cover in strong Bruhat order.
%\end{lemma}
%The coefficients $\ip{\al^\vee,\la}$ are known as {\it
%Chevalley coefficients}.

\subsection{Coproduct}
\label{sec:nilHeckecoproduct}
We follow Peterson \cite{Pet} in describing the coproduct of $\afA$.  Kostant and Kumar \cite{KK} take a slightly different approach.
\begin{prop} \label{P:coprodaction} Let $M$ and $N$ be left $\afA$-modules.
Define
\begin{equation*}
  M \otimes_{S} N = (M\otimes_\Z N)/\langle sm\otimes n - m \otimes sn \mid
  \text{$s\in S$, $m\in M$, $n\in N$} \rangle.
\end{equation*}
Then $\afA$ acts on $M \otimes_{S} N$ by
\begin{align*}
s \cdot (m\otimes n) &= sm \otimes n \\
 A_i \cdot (m\otimes n) &= A_i \cdot m \otimes n + m \otimes A_i\cdot
  n - \al_i A_i\cdot m\otimes A_i\cdot n.
\end{align*}
Under this action we have
\begin{equation} \label{E:tensorWeyl}
  w \cdot (m\otimes n) = wm \otimes wn
\end{equation}
for any $w \in \aW$.
\end{prop}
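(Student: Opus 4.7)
The plan is to verify that the proposed formulas satisfy the three families of defining relations of $\afA$—namely \eqref{eq:aila}, $A_i^2 = 0$, and the braid relations \eqref{eq:aiaj}—and then derive \eqref{E:tensorWeyl} using Lemma \ref{lem:whomo}.

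First I would check well-definedness of the $A_i$-action on the quotient $M \otimes_S N$ (not just on $M \otimes_\Z N$). Writing $\partial_i(s) := (s - s_i \cdot s)/\al_i$ for the divided difference on $S$ (well-defined since $\al_i$ divides $s - s_i \cdot s$), relation \eqref{eq:aila} extends to $A_i \cdot (s x) = (s_i \cdot s)(A_i \cdot x) + \partial_i(s)\, x$ in any $\afA$-module. Expanding both $A_i \cdot (sm \otimes n)$ and $A_i \cdot (m \otimes sn)$ by this rule and reducing modulo the tensor-over-$S$ relation shows the two expressions agree. Next, $A_i^2 = 0$ is verified by applying the formula twice, using $A_i^2 = 0$ on each factor together with the identity $A_i \al_i A_i = 2 A_i$ (a consequence of \eqref{eq:aila} and $A_i^2 = 0$, since $\partial_i(\al_i) = 2$). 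The scalar commutation relation \eqref{eq:aila} for the tensor action is a parallel direct expansion.

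The braid relations \eqref{eq:aiaj} are the genuine obstacle. Rather than grinding through a case-by-case expansion, I would use the following rational trick. Extend scalars to $K = \Frac(S)$. In $\afA \otimes_S K$ one has $s_i = 1 - \al_i A_i$ and $A_i = \al_i^{-1}(1 - s_i)$, and a short manipulation (using $\al_i m \otimes n = m \otimes \al_i n$ in $\otimes_S$) shows that the proposed formula for $A_i$ on $(M \otimes_S N) \otimes_S K$ is equivalent to the diagonal formula $s_i \cdot (m \otimes n) = s_i m \otimes s_i n$. Since $\aW$ acts on each of $M$ and $N$, the diagonal prescription manifestly defines a group action of $\aW$ on the tensor product, so all braid relations among the $s_i$, hence among the $A_i$, hold tautologically over $K$. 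The braid relations are polynomial identities in the $A_i$ and scalars, so validity in $\afA \otimes_S K$ forces validity already on $M \otimes_S N$.

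Having established the $\afA$-action, \eqref{E:tensorWeyl} follows by a direct calculation:
\begin{align*}
s_i \cdot (m \otimes n) &= (1 - \al_i A_i) \cdot (m \otimes n) \\
&= m \otimes n - \al_i A_i m \otimes n - \al_i m \otimes A_i n + \al_i^2 A_i m \otimes A_i n \\
&= (m - \al_i A_i m) \otimes (n - \al_i A_i n) = s_i m \otimes s_i n,
\end{align*}
where the regrouping in the last line uses $\al_i m \otimes A_i n = m \otimes \al_i A_i n$ and $\al_i A_i m \otimes A_i n = A_i m \otimes \al_i A_i n$ to pair the cross terms. Induction on $\ell(w)$ using any reduced expression extends this to arbitrary $w \in \aW$. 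The main obstacle throughout is handling the braid relations cleanly; the rational-extension argument above is designed precisely to reduce them to the automatic braid relations of $\aW$ acting diagonally.
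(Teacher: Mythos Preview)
Your approach is essentially the same as the paper's: both pass to $K=\Frac(S)$, observe that the formula for the $A_i$-action is equivalent to the diagonal formula $s_i\cdot(m\otimes n)=s_i m\otimes s_i n$, and use that $\aW$ acts automatically, so all relations among the $s_i$ (hence among the $A_i$) hold tautologically over $K$. Your explicit verifications of well-definedness, of $A_i^2=0$, and of \eqref{eq:aila} are correct and more detailed than what the paper writes down. Your derivation of \eqref{E:tensorWeyl} is exactly the paper's computation.

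There is, however, a gap in your descent from $K$ to $S$. You write that ``validity in $\afA\otimes_S K$ forces validity already on $M\otimes_S N$,'' but what you have actually shown is that the braid relations hold for the induced operators on $(M\otimes_S N)\otimes_S K$. This implies the braid relations on $M\otimes_S N$ only if the natural map $M\otimes_S N\to (M\otimes_S N)\otimes_S K$ is injective, i.e.\ only if $M\otimes_S N$ is $S$-torsion-free. For arbitrary $\afA$-modules $M,N$ there is no reason for this to hold. The paper closes this gap by first treating the universal case $M=N=\afA$: since $\afA$ is free over $S$ (Lemma~\ref{lem:Abasis}), so is $\afA\otimes_S\afA$, whence the extension to $K$ is injective and the braid relations descend. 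For general $M,N$, the $\afA$-action on $M\otimes_S N$ is then obtained from the universal case by specialization. Adding this reduction-to-the-universal-case step would make your argument complete.
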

\begin{proof}
By $s_i = 1 - \al_i A_i$, we see that
\begin{align*}
s_i \cdot (m \otimes n) & = m \otimes n - \al_iA_i \cdot m \otimes n - m \otimes \al_iA_i\cdot
  n + \al_i A_i\cdot m\otimes \al_iA_i\cdot n \\
&= s_i \cdot m \otimes s_i \cdot n.
\end{align*}
The formula \eqref{E:tensorWeyl} then follows.  Also
$$
(s_i s) \cdot (m \otimes n) = (s_i s) \cdot m \otimes s_i \cdot n = (s_i \cdot s)s_i \cdot (m \otimes n)
$$
agreeing with Lemma \ref{lem:waction}.
Since $\aW$ forms a basis of $\afA$ over $\Frac(S)$, this shows that one has an action of $\afA \otimes_S \Frac(S)$, assuming that scalars can be extended to $\Frac(S)$.   This would be the case if $M \otimes_S N$ is a free $S$-module, which is the case if we take $M = \afA = N$.  Since $\afA \otimes_S \afA$ is the universal case, it follows that one obtains an action 
of $\afA$ in general.
\end{proof}

Consider the case $M=\afA = N$. By Proposition \ref{P:coprodaction}
there is a left $S$-module homomorphism $\Delta:\afA \to \afA
\otimes_{S} \afA$ defined by $\Delta(a) = a \cdot (1 \otimes 1)$. It satisfies
\begin{align}
  \Delta(q) &= s \otimes 1 &\qquad&\text{for $s\in S$} \\
\label{E:DeltaT}
  \Delta(A_i) &= 1 \otimes A_i + A_i \otimes 1 - \al_i A_i \otimes A_i&\qquad&\text{for $i\in I$.}
\end{align}

Let $a\in\afA$ and $\Delta(a)=\sum_{v,w} a_{v,w} A_v \otimes A_w$ with
$a_{v,w}\in S$. In particular if
$b\in\afA$ and $\Delta(b)=\sum_{v',w'} b_{v',w'} A_{v'} \otimes A_{w'}$ then
\begin{equation}\label{E:Deltamult}
\Delta(ab) =  \Delta(a)\cdot \Delta(b) := \sum_{v,w,v',w'} a_{v,w}
b_{v',w'} A_v A_{v'} \otimes A_wA_{w'}.
\end{equation}

\begin{remark}
We caution that $\afA \otimes _S \afA$ is not a well-defined ring with the obvious multiplication.
\end{remark}

\subsection{Exercises and Problems}
The theory of nilHecke rings in the Kac-Moody setting is well-developed \cite{KK,Kum}.
\begin{enumerate}
\item
The following result is \cite[Proposition 4.30]{KK}.  Let $w \in \aW$ and $\la \in P$. Then
\[
A_w \lambda = (w \cdot \lambda)A_w + \sum_{w\,s_\alpha: \; \ell(ws_\alpha) = \ell(w) - 1}\ip{\al^\vee,\lambda} A_{w\,s_\alpha},
\]
where $\alpha$ is always taken to be a positive root of $\aW$.  The coefficients $\ip{\al^\vee,\la}$ are known as {\it
Chevalley coefficients}.
\item
(Center) What is the center of $\afA$?  (See \cite[Section 9]{Lam:Schub} for related discussion.)
\end{enumerate}

\section{Peterson's centralizer algebras}\label{sec:Pet}
Peterson studied a subalgebra of the affine nilHecke ring in his work \cite{Pet} on the homology of the affine Grassmannian.

\subsection{Peterson algebra and $j$-basis}
The Peterson centralizer subalgebra $\Pet$ is the centralizer $Z_{\afA}(S)$
of the scalars $S$ in the affine nilHecke ring $\afA$.  In this section we establish some basic
properties of this subalgebra.  The results here are unpublished works of Peterson.

\begin{lemma}\label{lem:Ptrans}
Suppose $a \in \afA$.  Write $a = \sum_{w \in \aW} a_w w$, where $a_w \in \Frac(S)$.  Then
$a \in \Pet$ if and only if $a_w = 0$ for all non-translation elements $w \neq t_\la$.
\end{lemma}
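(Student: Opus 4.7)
The plan is to compute the commutator of $a$ with an arbitrary scalar $s \in S$ using the basis $\{w \mid w\in \aW\}$ of $\afA \otimes_S \Frac(S)$, and then extract the coefficients.

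First I would use Lemma \ref{lem:waction} to rewrite
\[
a s = \sum_{w \in \aW} a_w\, w\, s = \sum_{w \in \aW} a_w (w\cdot s)\, w, \qquad sa = \sum_{w \in \aW} a_w\, s\, w,
\]
where in the second equality I use that $a_w \in \Frac(S)$ commutes with $s \in S \subset \Frac(S)$. Subtracting gives
\[
[s,a] = \sum_{w \in \aW} a_w\bigl(s - w\cdot s\bigr)\, w.
\]
Since $\{w\}_{w \in \aW}$ is a $\Frac(S)$-basis of $\afA \otimes_S \Frac(S)$, the element $a$ lies in $\Pet$ if and only if, for every $w \in \aW$ and every $s \in S$, one has $a_w(s - w\cdot s) = 0$. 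Because $\Frac(S)$ is a field, this forces, for each fixed $w$, either $a_w = 0$ or $w\cdot s = s$ for all $s \in S$.

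Next I would identify which $w \in \aW$ fix $S$ pointwise. Since $S = \Sym(P)$, fixing $S$ pointwise is equivalent to fixing $P$ pointwise. Writing $w = vt_\la$ with $v \in W$ and $\la \in Q^\vee$, the level zero action \eqref{eq:affinewaction} gives $w\cdot \mu = v\cdot \mu$ for $\mu \in P$, so $w$ acts trivially on $P$ iff $v$ acts trivially on $P$. Since $W$ acts faithfully on $P$, this forces $v = \id$, i.e.\ $w = t_\la$ is a translation. Conversely every translation acts trivially on $P$, hence on $S$.

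Combining these two steps yields the equivalence: $a \in \Pet$ iff $a_w = 0$ whenever $w$ is not a translation. The only mildly subtle point is justifying the passage to the $\Frac(S)$-basis of $\aW$ in $\afA \otimes_S \Frac(S)$, but this is precisely the content of the remark immediately after Lemma \ref{lem:whomo}, so no real obstacle arises.
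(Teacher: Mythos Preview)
Your proof is correct and follows essentially the same route as the paper: compute $as$ via Lemma~\ref{lem:waction}, compare coefficients in the $\Frac(S)$-basis $\{w\}$, and use that only translations act trivially on $P$ (hence on $S$). The only differences are cosmetic---you phrase the cancellation via $\Frac(S)$ being a field rather than $S$ being an integral domain, and you spell out the faithfulness argument for $W$ on $P$ a bit more explicitly.
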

\begin{proof}
By Lemma \ref{lem:waction}, we have for $s \in S$
$$
(\sum_w a_w w)s = \sum_w a_w (w\cdot s) w
$$
and so $a \in \Pet$ implies $a_w(w \cdot s) = a_w s$ for each $s$.  But using \eqref{eq:affinewaction}, one sees that every $w \in \aW$ acts non-trivially on $S$ except for the translation elements $t_\la$.  Since $S$ is an integral domain, this implies that $a_w = 0$ for all non-translation elements.
\end{proof}

\begin{lemma}\label{lem:Petcomm}
The subalgebra $\Pet$ is commutative.
\end{lemma}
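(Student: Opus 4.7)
The plan is to use Lemma \ref{lem:Ptrans} to reduce the commutativity of $\Pet$ to two simple facts: that translation elements $t_\la$ commute among themselves, and that translation elements commute with every scalar in $S$.

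First, by Lemma \ref{lem:Ptrans}, any $a \in \Pet$ can be written uniquely as
\[
a = \sum_\la a_\la\, t_\la, \qquad a_\la \in \Frac(S),
\]
where the sum ranges over $\la \in Q^\vee$ with only finitely many $a_\la$ nonzero. Here I am using that $\aW$ is a basis of $\afA \otimes_S \Frac(S)$ over $\Frac(S)$. Similarly any $b \in \Pet$ has an expansion $b = \sum_\mu b_\mu\, t_\mu$ with $b_\mu \in \Frac(S)$.

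Next, I would observe two commutation properties. By the level zero action \eqref{eq:affinewaction}, translation elements act trivially on $P$, hence on $S = \Sym(P)$ and on $\Frac(S)$. Combined with Lemma \ref{lem:waction}, this gives $t_\la\, s = s\, t_\la$ for every $s \in \Frac(S)$. Also, since translations in $\aW$ are multiplicative, $t_\la\, t_\mu = t_{\la+\mu} = t_\mu\, t_\la$.

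Putting these together,
\[
ab \;=\; \sum_{\la,\mu} a_\la\, t_\la\, b_\mu\, t_\mu \;=\; \sum_{\la,\mu} a_\la\, b_\mu\, t_\la\, t_\mu \;=\; \sum_{\la,\mu} a_\la\, b_\mu\, t_{\la+\mu},
\]
and the right-hand side is manifestly symmetric in $a$ and $b$, so $ab = ba$. There is no real obstacle here; the only subtlety is the passage to $\Frac(S)$-coefficients in order to apply Lemma \ref{lem:Ptrans}, but once granted, the argument is essentially a one-line computation exploiting that translations act trivially on the finite weight lattice.
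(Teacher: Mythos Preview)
Your proof is correct and follows exactly the approach of the paper, which simply says the lemma ``follows from Lemma~\ref{lem:Ptrans} and the fact that the elements $t_\la$ commute, and commute with $S$.'' You have merely unpacked that one-line justification into a careful computation.
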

\begin{proof}
Follows from Lemma \ref{lem:Ptrans} and the fact that the elements $t_\la$ commute, and commute with $S$.
\end{proof}

The following important result is the basis of Peterson's approach to affine Schubert calculus via the affine nilHecke ring.

\begin{theorem}[\cite{Pet,Lam:Schub}]\label{thm:jbasis}
The subalgebra $\Pet$ has a basis $\{j_w \mid w \in \aW^0\}$ where
$$
j_w = A_w + \sum_{w \in \aW - \aW^0} j_w^x A_x
$$
for $j_w^x \in S$.
\end{theorem}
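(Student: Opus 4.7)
My plan is to establish the basis $\{j_w\}$ in three steps: a complete description of $\Pet$ after inverting $S$, a triangularity argument for the projection to the Grassmannian part of the $A$-basis, and an integrality argument for the resulting coefficients.

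First, I would verify that every translation $t_\la$ with $\la \in Q^\vee$ lies in $\Pet$. The level-zero action \eqref{eq:affinewaction} gives $t_\la \cdot \mu = \mu$ for $\mu \in P$, and Lemma \ref{lem:waction} then yields $t_\la s = s t_\la$ for all $s \in S$. Combined with Lemma \ref{lem:Ptrans}, which says that an element of $\Pet$ has only translation terms in its $\aW$-expansion over $\Frac(S)$, this identifies
\[
\Pet \otimes_S \Frac(S) = \bigoplus_{\la \in Q^\vee} \Frac(S) \cdot t_\la,
\]
a free $\Frac(S)$-module of rank $|Q^\vee| = |\aW^0|$.

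Second, let $\pi\colon \afA \to \bigoplus_{w \in \aW^0} S A_w$ be the $S$-module projection onto the Grassmannian part of the $A$-basis. The theorem is equivalent to asserting that $\pi|_\Pet$ is an $S$-module isomorphism onto its target: surjectivity gives existence of each $j_w$, and injectivity gives uniqueness. To show the $\Frac(S)$-extension is an isomorphism, I use the bijection $\aW^0 \leftrightarrow Q^\vee$ sending $w$ to the translation $\la(w)$ in the coset $wW$. For each such $w$, one has a length-additive factorization $t_{\la(w)} = w \cdot u$ with $u \in W$, since $w$ is the minimal length coset representative. Expanding $t_{\la(w)}$ in the $A$-basis using the commutation rule \eqref{eq:aila}, the coefficient of $A_w$ is a product of positive roots (nonzero in $S$) coming from the finite-type expansion of $u$, while the coefficients of other $A_v$ with $v \in \aW^0$ should be controlled by the Bruhat order on $\aW^0$. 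This triangularity makes the matrix $\bigl(\pi(t_{\la(w)})_v\bigr)_{v,w \in \aW^0}$ invertible over $\Frac(S)$, so $\pi \otimes_S \Frac(S)$ is a $\Frac(S)$-isomorphism, and elements $j_w \in \Pet \otimes \Frac(S)$ with $\pi(j_w) = A_w$ exist uniquely.

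Third, I address integrality: each $j_w$ is constructed a priori over $\Frac(S)$ but must have $j_w^x \in S$. The defining commutation relation $j_w s = s j_w$ for $s \in S$, together with \eqref{eq:aila}, imposes $S$-linear constraints on the coefficients $j_w^x$. After normalizing the leading coefficient at $A_w$ to $1$, these constraints determine the remaining $j_w^x$ by downward recursion in length, and since all structure constants in $\afA$ are already in $S$, no denominators enter.

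The main obstacle will be the triangularity claim in the second step: one must identify the correct partial order on $\aW^0$ (coming from Bruhat order restricted via the coset bijection) and verify that Grassmannian $v \neq w$ appearing in the expansion of $t_{\la(w)}$ are strictly smaller in that order, while computing the diagonal coefficient as an explicit product of positive roots. The integrality argument in the third step is also delicate, and may require a more conceptual interpretation (such as realizing $\Pet$ as a module via the coproduct of Section \ref{sec:nilHeckecoproduct}) in place of the direct recursion.
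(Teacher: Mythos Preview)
Your approach is genuinely different from the paper's. The paper works dually: it introduces the function spaces $\aPsi$ and $\aPsi^0$ dual to $\afA$, defines $j_w$ by the condition $\xi_0^v(j_w)=\delta_{vw}$, and deduces integrality of $j_w^x$ from the key Lemma~\ref{lem:wrongway}, namely that restriction-to-translations sends $\aPsi$ into $\aPsi^0$. That lemma in turn is proved via the GKM-type divisibility description of $\aPsi$ and $\aPsi^0$. There is no triangular change-of-basis matrix and no recursion.

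Your first two steps can be made to work. Step~1 is correct. For Step~2, the triangularity you want does hold: if $v\in\aW^0$ and $v\le t_{\la(w)}$ in Bruhat order, then passing to minimal coset representatives gives $v\le w_{\la(w)}=w$; and the diagonal entry (the coefficient of $A_w$ in $t_{\la(w)}$) is nonzero because in the length-additive factorization $t_{\la(w)}=w\,u$ with $u\in W$, the only way to obtain $A_w$ is from the top term of $w$ times the identity term of $u$ (any nontrivial right factor from $W$ would force a right descent of $w$ in $\fI$, contradicting $w\in\aW^0$). So $\pi\otimes\Frac(S)$ is indeed an isomorphism, and the $j_w$ exist uniquely over $\Frac(S)$.

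The genuine gap is Step~3. Your claim that ``since all structure constants in $\afA$ are already in $S$, no denominators enter'' is not correct. Writing out $j_w s = s j_w$ using \eqref{eq:aila} and comparing coefficients of $A_z$ gives
\[
(\la - z\cdot\la)\,j_w^z \;=\; \text{(an $S$-combination of $j_w^x$ with $\ell(x)>\ell(z)$)},
\]
and for non-translation $z$ one must \emph{divide} by $\la - z\cdot\la\in S$, which is not a unit. (Equivalently, the triangular matrix from Step~2 has diagonal entries that are products of roots, so its inverse does not have entries in $S$.) Thus neither the recursion nor the matrix inversion shows $j_w^x\in S$; this integrality is exactly the substance of the theorem, and it is where the paper invokes the GKM description. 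Your parenthetical suspicion that ``a more conceptual interpretation'' is needed is right: that interpretation is Lemma~\ref{lem:wrongway}, and nothing in your outline substitutes for it.
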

Peterson constructs the basis of Theorem \ref{thm:jbasis} using the geometry of
based loop spaces (see \cite{Pet,Lam:Schub}).  We sketch a purely algebraic proof of this theorem, following the ideas of Lam, Schilling, and Shimozono \cite{LSS}.

\subsection{Sketch proof of Theorem \ref{thm:jbasis}}
Let $\Fun(\aW, S)$ denote the set of functions $\xi: \aW \to S$.  We may think of functions $\xi \in \Fun(\aW,S)$ as functions on $\afA$, by the formula $\xi(\sum_{w \in \aW} a_w \, w) = \sum_{w \in \aW} \xi(w) a_w$.  Note that if $a = \sum_{w \in \aW} a_w \, w \in \afA$, the $a_w$ may lie in $\Frac(S)$ rather than $S$, so that in general $\xi(a) \in \Frac(S)$.  Define 
$$
\aPsi:= \{\xi \in \Fun(\aW,S) \mid \xi(a) \in S \text{ for all $a \in \afA$}\}.
$$
Let 
$$
\aPsi^0 = \{\xi \in \aPsi \mid \xi(w) = \xi(v)  \text{ whenever $wW = vW$}\}.
$$
It follows easily that $\aPsi$ has a basis over $S$ given by $\{\xi^w \mid w \in \aW\}$ where $\xi^w(A_v) = \delta_{vw}$ for every $ v\in \aW$.  Similarly, $\aPsi^0$ has a basis given by  $\{\xi^w_0 \mid w \in \aW^0\}$ where $\xi^w_0(A_v) = \delta_{vw}$ for every $v \in \aW^0$.

The most difficult step is the following statement:
\begin{lem}\label{lem:wrongway}
There is a map $\om: \aPsi \to \aPsi^0$ defined by $\om(\xi)(t_\la) = \xi(t_\la)$.
\end{lem}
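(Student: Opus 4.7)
The definition only specifies $\om(\xi)$ on translations, so the first task is to check that this extends uniquely to a $W$-invariant function on $\aW$.  Using the semidirect decomposition $\aW = W \ltimes Q^\vee$, write $v \in \aW$ as $v = w\,t_\mu$ with $w \in W$ and $\mu \in Q^\vee$; then $v w^{-1} = w\, t_\mu w^{-1} = t_{w \cdot \mu}$ shows that the right coset $vW$ contains the translation $t_{w \cdot \mu}$.  Two translations share a right coset iff they differ by an element of $W \cap Q^\vee = \{1\}$, so each right coset contains exactly one translation.  The required $W$-invariant extension is therefore unambiguously given by $\om(\xi)(w\,t_\mu) := \xi(t_{w \cdot \mu})$.

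To prove $\om(\xi) \in \aPsi$ I introduce the $\Frac(S)$-linear ``projection to translations'' $\pi : \afA \otimes_S \Frac(S) \to \afA \otimes_S \Frac(S)$, defined on the $\aW$-basis by $\pi(w\,t_\mu) = t_{w \cdot \mu}$.  A direct expansion in the $\aW$-basis yields $\om(\xi)(a) = \xi(\pi(a))$ for every $a \in \afA$, where $\xi$ is extended $\Frac(S)$-linearly.  The argument then reduces to the following key claim: $\pi(a) \in \Pet$ for every $a \in \afA$.  Granted this claim, Lemma~\ref{lem:Ptrans} implies that $\pi(a)$ is a $\Frac(S)$-combination of translations lying in $\afA$, and combined with $\xi \in \aPsi$ one has $\xi(\pi(a)) \in S$; hence $\om(\xi) \in \aPsi$, and $W$-invariance being automatic, in fact $\om(\xi) \in \aPsi^0$.

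The key claim $\pi(\afA) \subseteq \Pet$ will be proved by induction on $\ell(v)$ applied to $a = A_v$.  The base case $v = e$ is immediate.  For the inductive step one writes $A_v = A_{v'}A_i$ with $\ell(v) = \ell(v')+1$, expands $A_i = \alpha_i^{-1}(1 - s_i)$, and uses the commutation rule $ws = (w \cdot s)w$ of Lemma~\ref{lem:waction} to move all scalars to the left.  Expanding any $u = w\,t_\mu \in \aW$ that appears, one computes $\pi(u) - \pi(u s_i) = 0$ when $i \neq 0$ (since $s_i \in W$ preserves the translation in the coset), while for $i = 0$ one obtains $\pi(u) - \pi(u s_0) = t_{w\cdot \mu} - t_{w\cdot \mu + w \theta^\vee}$, using $s_0 = s_{\theta^\vee} t_{-\theta^\vee}$ and $s_\theta \cdot \theta^\vee = -\theta^\vee$.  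This yields an explicit recursion expressing $\pi(A_v)$ in terms of $\pi(A_{v'})$ and the level-zero action (equation~\eqref{eq:affinewaction}).

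The main obstacle is verifying that the $\Frac(S)$-denominators $(u \cdot \alpha_i)^{-1}$ introduced by the recursion cancel to return an element of $\afA$; this reflects an integrality statement of the form $e - t_{w\theta^\vee} \in \alpha_0 \cdot \afA$ for each $w \in W$, together with compatible statements for the Bruhat expansion $A_v = \sum_{u \le v} c_u^v\, u$.  I expect the cleanest packaging uses the coproduct $\Delta$ of Section~\ref{sec:nilHeckecoproduct} together with the $\afA$-module structure on $\afA \otimes_S \afA$ from Proposition~\ref{P:coprodaction}: the formula $\Delta(A_i) = 1 \otimes A_i + A_i \otimes 1 - \alpha_i A_i \otimes A_i$ encodes precisely the cancellation pattern needed, and multiplicativity of $\Delta$ propagates the Peterson-membership of $\pi(A_{v'})$ through the recursion.
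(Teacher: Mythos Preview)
Your reduction is sound: defining $\pi$ on the $\aW$-basis by $\pi(w\,t_\mu)=t_{w\cdot\mu}$ and observing that $\om(\xi)(a)=\xi(\pi(a))$ is correct, and it correctly reduces the lemma to the claim $\pi(\afA)\subseteq\Pet$ (equivalently, $\pi(A_v)\in\afA$ for all $v$).  The observation that $\pi(A_{v'}A_i)=0$ whenever $i\neq 0$ is also right.  But this reformulation is \emph{equivalent} to the lemma, not a step toward it: the condition $\xi^w(\pi(A_v))\in S$ for all $w$ is precisely the statement that the $A_w$-coefficients of $\pi(A_v)$ lie in $S$, so you have only restated what must be shown.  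The integrality assertion you flag as ``the main obstacle''---that the denominators $(u\cdot\alpha_i)^{-1}$ cancel---is the entire content, and you do not prove it.  The gesture toward $\Delta(A_i)$ does not help: the coproduct formula governs how $A_i$ acts on $\afA\otimes_S\afA$, not how $\pi$ behaves under right multiplication by $A_i$, and there is no evident map intertwining the two.  As it stands the argument is circular.

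The paper takes a different route.  Rather than trying to control $\pi$ directly, it invokes an explicit characterization of $\aPsi$ and $\aPsi^0$ by GKM-type divisibility conditions (the Proposition immediately following this lemma, due in the Grassmannian case to Goresky--Kottwitz--MacPherson): $\xi\in\aPsi$ iff certain expressions $\xi(w(1-t_{\alpha^\vee})^{d-1})$ and $\xi(w(1-t_{\alpha^\vee})^{d-1}(1-r_\alpha))$ satisfy divisibility by powers of $\alpha$, while $\xi\in\aPsi^0$ requires only the first family of conditions together with $W$-invariance.  Since the first family involves only the values of $\xi$ at translations, it is immediate that restricting $\xi\in\aPsi$ to translations and extending $W$-invariantly lands in $\aPsi^0$.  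This bypasses the integrality computation entirely; the divisibility content is packaged into the GKM description, whose proof (referenced to \cite{GKM} and \cite{LSS}) is where the real work happens.
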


In other words, $\om(\xi)$ remembers only the values of $\xi$ on translation elements, and one notes that $\om(\xi^w) = \xi^w_0$ for $w \in \aW^0$.  By Lemma \ref{lem:Ptrans}, $\xi(a) = \om(\xi)(a)$ for $\xi \in \aPsi$ and $a \in \Pet$.  We define $\{j_w \in \Pet \otimes \Frac(S) \mid w \in \aW^0\}$ by the equation
$$
\xi_0^v(j_w) = \delta_{wv}
$$
for $w,v \in \aW^0$.  Such elements $j_w$ exist (and span $\Pet \otimes \Frac(S)$ over $\Frac(S)$) since each $\xi \in \aPsi^0$ is determined by its values on the translations $t_\la$.  The coefficient $j_w^x$ of $A_x$ in $j_w$ is equal to the coefficient of $\xi^w_0$ in $\om(\xi^x)$, which by Lemma \ref{lem:wrongway}, must lie in $S$.  This proves Theorem \ref{thm:jbasis}.

Finally, Lemma \ref{lem:wrongway} follows from the following description of $\aPsi$ and $\aPsi^0$, the latter due to Goresky-Kottwitz-Macpherson \cite[Theorem 9.2]{GKM}.  See \cite{LSS} for an algebraic proof in a slightly more general situation.
\begin{proposition}
Let $\xi \in \Fun(\aW,S)$.  Then $\xi \in \aPsi$ if and only if for each $\alpha \in R$, $w \in \aW$, and each integer $d > 0$ we have
\begin{equation}\label{eq:smallGKM}
\xi(w(1-t_{\alpha^\vee})^{d-1}) \qquad \mbox{is divisible by $\alpha^{d-1}$}
\end{equation}
and
\begin{equation}
\xi(w(1-t_{\alpha^\vee})^{d-1}(1-r_\alpha)) \qquad \mbox{is divisible by $\alpha^d$.}
\end{equation}
Let $\xi \in \Fun(\aW,S)$ satisfy $\xi(w) = \xi(v)$ whenever $wW = vW$.  Then $\xi \in \aPsi^0$ if it satisfies \eqref{eq:smallGKM}.
\end{proposition}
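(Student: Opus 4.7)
The plan is to exploit the two natural bases of $\afA \otimes_S \Frac(S)$: the nilHecke basis $\{A_w\}$, in which $\afA$ has $S$-coefficients, and the group basis $\{w \mid w \in \aW\}$, in which $\xi$ is evaluated pointwise via $\xi(\sum a_w w) = \sum a_w \xi(w)$. Membership $\xi \in \aPsi$ says precisely that the $\Frac(S)$-denominators introduced by the change of basis are killed upon evaluation by $\xi$. The GKM-type conditions pinpoint exactly which cancellations are required.

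For the forward direction, I would construct, for each finite root $\alpha \in R$, element $w \in \aW$, and integer $d > 0$, explicit elements $a(w,\alpha,d)$ and $a'(w,\alpha,d)$ of $\afA$ whose expansions in the group basis are $\alpha^{-(d-1)}\, w(1-t_{\alpha^\vee})^{d-1}$ and $\alpha^{-d}\, w(1-t_{\alpha^\vee})^{d-1}(1-r_\alpha)$ respectively, modulo $S$-linear combinations of shorter terms. The key input is that in $\afA \otimes \Frac(S)$, the translation $t_{\alpha^\vee}$ factors as a product of two parallel affine reflections $r_{\alpha+k\delta}\, r_{\alpha+\ell\delta}$, and each factor $(1-r_\beta)$ is divisible by $\beta$ after clearing nilHecke denominators. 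Iterating this decomposition, and using the level-zero identification of $\delta$ with $0$ in $P$, converts $(1-t_{\alpha^\vee})^{d-1}$ into $\alpha^{d-1}$ times an element of $\afA$. Applying $\xi$ to $a(w,\alpha,d)$ then yields the first divisibility, and adjoining an extra factor of $A_{r_\alpha}$ yields the second.

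For the reverse direction, I would induct on $\ell(v)$ to prove $\xi(A_v) \in S$ for all $v \in \aW$, using the recursion $A_v = A_i A_{v'}$ together with $A_i = \alpha_i^{-1}(1-s_i)$. This reduces integrality of $\xi(A_v)$ at length $\ell(v')+1$ to a divisibility by $\alpha_i$ of an expression in $\xi$-values at strictly shorter elements; the assumed GKM conditions are exactly what is needed to propagate the induction without leaving an uncancelled denominator. The statement about $\aPsi^0$ then follows quickly: if $\xi$ is right $W$-invariant and $\alpha \in R$ is a finite root, then $r_\alpha \in W$ gives $\xi(u r_\alpha) = \xi(u)$ for every $u \in \aW$, and hence $\xi(w(1-t_{\alpha^\vee})^{d-1}(1-r_\alpha))$ vanishes identically, trivially divisible by $\alpha^d$. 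Thus \eqref{eq:smallGKM} alone characterizes $\aPsi^0$.

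The main obstacle I anticipate is the reverse direction: one must show that these finitely many local divisibility conditions at each root suffice to force integrality of $\xi$ against every element of $\afA$ simultaneously. The naive inductive step produces expressions supported on infinitely many translates $w\, t_{k\alpha^\vee}$, and verifying that all the requisite cancellations propagate cleanly requires the careful bookkeeping carried out in \cite{LSS}; for the $W$-invariant case one may alternatively invoke the Goresky-Kottwitz-MacPherson description \cite[Theorem 9.2]{GKM} of the $T$-equivariant cohomology of the affine Grassmannian.
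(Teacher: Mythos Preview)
The paper does not give its own proof of this proposition: it states the result and refers the reader to \cite[Theorem~9.2]{GKM} for the $\aPsi^0$ statement and to \cite{LSS} for an algebraic argument in general. Your plan lands in exactly the same place, explicitly invoking those two references for the difficult reverse direction, so there is no in-paper argument to compare against; in that sense your proposal matches the paper.

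One substantive point on your forward-direction sketch. You correctly argue that $(1-t_{\alpha^\vee})^{d-1}=\alpha^{d-1}C$ for some $C\in\afA$: write $t_{\alpha^\vee}$ as a product of two affine reflections whose roots both restrict to $\pm\alpha$ under $\delta\mapsto 0$, so $1-t_{\alpha^\vee}=\alpha C_0$ with $C_0\in\afA$, and note that $\alpha$ commutes with $1-t_{\alpha^\vee}$. But $\xi$ is \emph{left} $S$-linear, and the relation $w\,s=(w\cdot s)\,w$ forces the scalar across $w$ as $w\cdot\alpha$, giving
\[
\xi\bigl(w(1-t_{\alpha^\vee})^{d-1}\bigr)=(w\cdot\alpha)^{d-1}\,\xi(wC),
\]
hence divisibility by $(w\cdot\alpha)^{d-1}$ rather than by $\alpha^{d-1}$. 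A small check confirms the discrepancy: in type $A_2$ with $\xi=\xi^{s_1}$, $w=s_2$, $\alpha=\alpha_1$, and $d=1$ in the second condition, one has $\xi\bigl(s_2(1-r_{\alpha_1})\bigr)=\alpha_1+\alpha_2=s_2\cdot\alpha_1$, which is not a multiple of $\alpha_1$. So either the printed statement intends $(w\cdot\alpha)$ in place of $\alpha$ --- the usual GKM normalization, and what your construction actually yields --- or an additional idea is required. This does not affect the paper's use of the proposition: the passage to $\aPsi^0$ only needs the condition at translations $w=t_\lambda$, where $w\cdot\alpha=\alpha$; and your observation that the second divisibility is automatic for right $W$-invariant $\xi$ (since $r_\alpha\in W$ makes $\xi(u(1-r_\alpha))$ vanish identically) is correct as stated.
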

\begin{remark}
The ring $\aPsi$ is studied in detail by Kostant and Kumar \cite{KK} in the Kac-Moody setting.
\end{remark}

\subsection{Exercises and Problems}
\begin{enumerate}
\item
Show that $\Delta$ sends $\Pet$ to $\Pet \otimes \Pet$.  Show that the 
coproduct structure constants of $\Pet$ in the $j$-basis are special cases of the coproduct structure constants of $\afA$ in the
$A_w$-basis.

\item ($j$-basis for translations \cite{Pet, Lam:Schub})
Prove using Theorem \ref{thm:jbasis} that $$j_{t_\la} = \sum_{\mu \in W \cdot \la} A_{t_{\mu}}.$$

\item ($j$-basis is self-describing)
Show that the coefficients $j_w^x$ directly determine the structure constants of the $\{j_w\}$ basis.

\item
Find a formula for $j_{s_i t_\la}$ (see \cite[Proposition 8.5]{LS:QH} for a special case).

\item
Extend the construction of $\Pet$, and Theorem \ref{thm:jbasis} to extended affine Weyl groups.

%\item 
%Can the constructions of this section be extended to the case of twisted affine Lie algebras?

\item (Generators)
Find generators and relations for $\Pet$.  This does not appear to be known even if type $A$.

\item 
Find general formulae for $j_w$ in terms of $A_x$.  See \cite{LS:QH} for a formula in terms of quantum Schubert polynomials, 
which however is not very explicit.
\end{enumerate}

\section{(Affine) Fomin-Stanley algebras}\label{sec:affineFS}
Let $\phi_0: S \to \Z$ denote the map which sends a polynomial to its constant term.  For example, $\phi_0(3\alpha^2_1\alpha_2 + \alpha_2 +5) = 5$.

\subsection{Commutation definition of affine Fomin-Stanley algebra}
We write $\aNC$ for the affine nilCoxeter algebra.  There is an {\it evaluation at 0} map $\phi_0: \afA \to \aNC$ given by $\phi_0(\sum_w a_w\, A_w) = \sum_w \phi_0(a_w)\,A_w$.  We define the {\it affine Fomin-Stanley subalgebra} to be $\aB = \phi_0(\Pet) \subset \aNC$.  The following results follow from Lemma \ref{lem:Petcomm} and Theorem \ref{thm:jbasis}.

\begin{lemma}
The set $\aB \subset \aNC$ is a commutative subalgebra of $\aNC$.
\end{lemma}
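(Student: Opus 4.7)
The plan is to show that the restriction $\phi_0|_{\Pet}:\Pet\to\aNC$ is a ring homomorphism. Given this, $\aB = \phi_0(\Pet)$ is automatically a subalgebra of $\aNC$, and it inherits commutativity from Lemma \ref{lem:Petcomm}.

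First I would identify $\ker(\phi_0)=S_+\afA$, where $S_+\subset S$ is the augmentation ideal (polynomials with vanishing constant term). This follows directly from the $S$-basis $\{A_w\}$ of $\afA$: a sum $\sum_w s_w A_w$ lies in $\ker(\phi_0)$ precisely when every $s_w\in S_+$. Using the commutation rule $A_w\cdot s = (w\cdot s)A_w+(\text{strictly shorter terms in }\aW)$, together with the fact that $\aW$ preserves $S_+$, one verifies easily that $S_+\afA$ is a \emph{right} ideal of $\afA$. It is not a left ideal — for instance $A_i\cdot\alpha_i = 2-\alpha_i A_i$ has constant term $2$, hence does not lie in $S_+\afA$ — which is why $\phi_0$ fails to be a ring homomorphism on all of $\afA$.

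Next I would use the defining property $\Pet = Z_{\afA}(S)$: for any $a\in\Pet$, the element $a$ commutes with every element of $S_+$, so
\[
a\cdot(S_+\afA) \;=\; S_+\cdot a\cdot\afA \;\subset\; S_+\afA,
\]
i.e., left multiplication by elements of $\Pet$ also preserves $\ker(\phi_0)$. Now given $a,b\in\Pet$, decompose $b = \phi_0(b) + (b-\phi_0(b))$ with $b-\phi_0(b)\in S_+\afA$; then $a(b-\phi_0(b))\in a\cdot S_+\afA \subset \ker(\phi_0)$, so $\phi_0(ab) = \phi_0(a\,\phi_0(b))$. Similarly, decomposing $a=\phi_0(a)+(a-\phi_0(a))$, the difference $(a-\phi_0(a))\phi_0(b) \in S_+\afA\cdot\afA\subset S_+\afA$ because $S_+\afA$ is a right ideal, while $\phi_0(a)\phi_0(b)\in\aNC$ (on which $\phi_0$ acts as the identity). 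Combining, $\phi_0(ab)=\phi_0(a)\phi_0(b)$, which together with $\phi_0(1)=1$ gives the desired ring homomorphism property.

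The main obstacle is the asymmetry noted above: $\ker(\phi_0)$ is a right but not a left ideal, so $\phi_0$ is not globally multiplicative. The Peterson condition $\Pet\subset Z_{\afA}(S)$ is precisely what is required to force left multiplication by elements of $\Pet$ to respect $\ker(\phi_0)$, and once this is in place the rest is formal. Observe that Theorem \ref{thm:jbasis} is not strictly necessary for this lemma, even though it is cited in the hint; the $j$-basis will be used later to pin down a distinguished basis of $\aB$.
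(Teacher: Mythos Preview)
Your argument is correct and is essentially a fleshed-out version of what the paper leaves implicit. The paper simply asserts that the lemma ``follows from Lemma~\ref{lem:Petcomm} and Theorem~\ref{thm:jbasis}'' without spelling out why $\phi_0$ is multiplicative on $\Pet$; your identification of $\ker(\phi_0)=S_+\afA$ as a right ideal, together with the centralizer property of $\Pet$ to handle left multiplication, is exactly the missing verification. Your closing remark is also accurate: the paper's citation of Theorem~\ref{thm:jbasis} is really aimed at the subsequent Theorem~\ref{thm:j0basis} on the $j^0$-basis, and the present lemma needs only Lemma~\ref{lem:Petcomm} (for commutativity) plus the definition $\Pet=Z_{\afA}(S)$ (for multiplicativity of $\phi_0|_\Pet$).
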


\begin{theorem}[\cite{Lam:Schub}]\label{thm:j0basis}
The algebra $\aB$ has a basis $\{j_w^0 \mid w \in \aW^0\}$ satisfying
$$
j_w^0 = A_w + \sum_{\stackrel{x \notin \aW^0}{\ell(x) = \ell(w)}} j_w^x  A_x
$$
and $j_w^0$ is the unique element in $\aB$ with unique Grassmanian term $A_w$.
\end{theorem}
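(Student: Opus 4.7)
I would obtain $j_w^0$ as $\phi_0(j_w)$ for $j_w$ the Peterson $j$-basis of Theorem~\ref{thm:jbasis}, using a grading argument on $\afA$ to pin down which $A_x$-terms survive $\phi_0$.  Equip $\afA$ with the grading $\deg(A_i) = -1$ and $\deg(\la) = 1$ for $\la\in P$.  A quick check shows the defining relations \eqref{eq:aila}--\eqref{eq:aiaj} are homogeneous (for instance, both $A_i\la$ and $(s_i\la)A_i$ have degree $0$, matching the scalar $\ip{\al_i^\vee,\la}\in\Z$), so $S$ is a graded subring and $\Pet = Z_{\afA}(S)$ is a graded subalgebra; in particular each homogeneous component of an element of $\Pet$ lies in $\Pet$.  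The uniqueness built into Theorem~\ref{thm:jbasis} --- if $b\in\Pet$ has $A_w$ as its only Grassmannian term, expanding $b$ in the $S$-basis $\{j_v\}$ and reading off Grassmannian coefficients forces $b=j_w$ --- then implies that each $j_w$ is homogeneous of degree $-\ell(w)$: its degree $-\ell(w)$ homogeneous component lies in $\Pet$, still has $A_w$ as its unique Grassmannian term, and is supported elsewhere on $\aW\setminus\aW^0$.  Hence every polynomial coefficient $j_w^x\in S$ is homogeneous of degree $\ell(x)-\ell(w)\ge 0$.

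Set $j_w^0 := \phi_0(j_w)\in\aB$.  Since $\phi_0$ annihilates polynomials of positive degree and fixes constants, only the coefficients $j_w^x$ with $\ell(x)=\ell(w)$ survive, giving
\[
j_w^0 \;=\; A_w \;+\; \sum_{\stackrel{x\notin\aW^0}{\ell(x)=\ell(w)}} \phi_0(j_w^x)\, A_x,
\]
which is the asserted form.  For the basis property, any $a\in\Pet$ can be written $a=\sum_{w\in\aW^0}c_w\,j_w$ with $c_w\in S$, and since $\phi_0$ is a ring homomorphism on $S$ applied coefficientwise, $\phi_0(a)=\sum_{w} \phi_0(c_w)\,j_w^0$; thus $\{j_w^0\}_{w\in\aW^0}$ spans $\aB$ over $\Z$.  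Linear independence is immediate: $j_w^0$ carries $A_w$ with coefficient $1$, whereas $j_v^0$ for $v\neq w$ in $\aW^0$ carries no $A_w$-term at all (because $j_v$ doesn't).  For the uniqueness clause, if $b\in\aB$ has $A_w$ as its unique Grassmannian term, then $b-j_w^0\in\aB$ has no Grassmannian term; expanding in the basis $\{j_v^0\}$ and reading off Grassmannian coefficients forces the expansion to vanish, so $b=j_w^0$.

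\textbf{Main obstacle.}  The delicate point is establishing homogeneity of $j_w$.  Without it, $\phi_0(j_w^x)$ could be nonzero for some $x$ with $\ell(x)\ne\ell(w)$, breaking the length-equality clause in the statement; everything else is formal bookkeeping with the $S$-basis $\{j_v\}$ and the coefficientwise action of $\phi_0$.
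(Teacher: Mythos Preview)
Your proposal is correct and follows exactly the route the paper intends: the paper simply asserts that Theorem~\ref{thm:j0basis} ``follows from Lemma~\ref{lem:Petcomm} and Theorem~\ref{thm:jbasis}'', i.e., set $j_w^0=\phi_0(j_w)$ and read off the claimed properties.  Your grading argument on $\afA$ (with $\deg A_i=-1$, $\deg\la=1$) is the natural way to supply the one detail the paper suppresses, namely why only terms $A_x$ with $\ell(x)=\ell(w)$ survive under $\phi_0$; the uniqueness-forces-homogeneity step is clean and correct.
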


\begin{prop}[\cite{Lam:Schub}]\label{prop:affStan}
The subalgebra $\aB \subset \aNC$ is given by
$$
\aB = \{a\in \aNC | \phi_0(as) = \phi_0(s)a \text{ for all } s \in S\}.
$$
\end{prop}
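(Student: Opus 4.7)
\textbf{The plan} is to verify both inclusions. Let $C$ denote the right-hand side of the claimed identity.

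First I handle $\aB \subseteq C$. Given $a = \phi_0(b)$ with $b \in \Pet$, I will establish
\[
\phi_0(as) \;=\; \phi_0(\phi_0(b)\,s) \;=\; \phi_0(bs) \;=\; \phi_0(sb) \;=\; \phi_0(s)\,\phi_0(b) \;=\; \phi_0(s)\,a,
\]
whose middle equality is the centralizer hypothesis $bs = sb$. For the other equalities I expand $b = \sum_w b_w A_w$ with $b_w \in S$: since $\afA$ is a free left $S$-module on $\{A_w\}$ (Lemma \ref{lem:Abasis}), one has $sb = \sum_w (sb_w) A_w$, yielding $\phi_0(sb) = \phi_0(s)\,\phi_0(b)$ immediately from the fact that $\phi_0|_S : S \to \Z$ is a ring homomorphism. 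For $\phi_0(bs) = \phi_0(\phi_0(b)\,s)$ I use the commutation formula $A_w s = \sum_v c_{v,w}(s) A_v$ with $c_{v,w}(s) \in S$: both expressions have $A_v$-coefficient $\sum_w \phi_0(b_w)\,\phi_0(c_{v,w}(s))$, again using multiplicativity of $\phi_0$ on $S$.

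For $C \subseteq \aB$, I leverage the basis $\{j_w^0 \mid w \in \aW^0\}$ of $\aB$ from Theorem \ref{thm:j0basis}. Given $a \in C$ with Grassmannian coefficients $a_w$ for $w \in \aW^0$, set $b_0 := \sum_{w \in \aW^0} a_w j_w^0 \in \aB$. The forward inclusion gives $b_0 \in C$, and since each $j_w^0$ has Grassmannian part equal to $A_w$, the difference $c := a - b_0$ lies in $C$ with vanishing Grassmannian part. Thus the proposition reduces to showing that any $c \in C$ with $c^G = 0$ must be zero.

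Proving this vanishing claim will be the main obstacle. One may assume $c$ is homogeneous of length $n$, so the hypothesis becomes $\phi_0(cs) = 0$ for all $s \in S$ of positive degree. Expanding via $A_w s = \sum_v c_{v,w}(s) A_v$ and observing that $\phi_0(c_{v,w}(s))$ is nonzero only when $\deg s = \ell(w) - \ell(v)$ (in which case $c_{v,w}(s)$ is already a scalar given by an iterated BGG divided difference of $s$), the condition becomes the system
\begin{equation*}
\sum_{w:\ \ell(w) = \ell(v)+d} c_w\, c_{v,w}(s) \;=\; 0, \qquad v \in \aW,\ s \in S \text{ homogeneous of degree } d \geq 1.
\end{equation*}
Already the case $d = 1$ with $s \in P$ yields, for each $v$ of length $n-1$, the relation $\sum_x c_x\,\bar\alpha^\vee_{x,v} = 0$ in $P^\vee$, summed over covers $x$ of $v$ in affine Bruhat order, with $\bar\alpha^\vee_{x,v}$ the finite part of the associated affine coroot. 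Combining these with the analogous constraints from $d \geq 2$, I expect an induction on affine Bruhat order, together with the rigidity provided by the uniqueness clause of Theorem \ref{thm:j0basis}, to force all $c_x = 0$, completing the proof.
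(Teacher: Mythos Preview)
Your forward inclusion $\aB \subseteq C$ is correct and matches the paper's first step. Your reduction of $C \subseteq \aB$ to the vanishing claim ``$c \in C$ with no Grassmannian support implies $c=0$'' is also correct and is exactly how the paper organizes the argument.

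The gap is that you do not prove the vanishing claim. Writing out the linear system coming from $\phi_0(cs)=0$ is fine, but the sentence ``I expect an induction on affine Bruhat order, together with the rigidity provided by the uniqueness clause of Theorem~\ref{thm:j0basis}, to force all $c_x=0$'' is not an argument. In particular, invoking the uniqueness clause of Theorem~\ref{thm:j0basis} here is circular: that clause says $j_w^0$ is the unique element \emph{of $\aB$} with a single Grassmannian term, but you only know $c \in C$, not $c \in \aB$; the whole point is to show $C \subseteq \aB$. The bare $d=1$ relations you wrote down (one coroot relation per cover) are genuinely too weak on their own to pin down $c$, and you have not explained what extra leverage the higher-degree relations give or how an induction would run.

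The paper supplies the missing idea: reduce to the \emph{finite} nilCoxeter algebra. One first shows (this is step~(2) of the exercise following the Proposition) that if $a \in C$ is supported on the finite Weyl group $W$, then $a$ is a scalar multiple of $A_{\id}$. The key input is the faithfulness of the action of $\A_0$ on $S$ by divided difference operators: looking at the $A_{\id}$-coefficient of $\phi_0(as)$ for $s$ homogeneous of degree $d$ shows that the degree-$d$ component $a_d$ of $a$ annihilates $S_d$ under $\partial$, and the pairing $(A_w,s)\mapsto \partial_w(s)$ between $(\A_0)_d$ and $S_d$ is perfect (Schubert classes versus Schubert polynomials for $G/B$). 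From this finite statement one then argues (step~(3)) that any nonzero $a \in C$ must contain an affine Grassmannian term, which is precisely your vanishing claim. Your write-up needs this finite/faithfulness step, or an equivalent replacement, to be complete.
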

Proposition \ref{prop:affStan} is proved in the following exercises (also see \cite[Propositions 5.1,5.3,5.4]{Lam:Schub}).
\begin{exercise} \
\begin{enumerate}
\item
Check that $a \in \aB$ satisfies $\phi_0(as) = \phi_0(s)a$ for all $s \in S$, thus obtaining one inclusion.
\item
Show that if $a = \sum_{w \in W} a_w A_w \in \A_0$ lies in $\aB$, then $a$ is a multiple of $A_{\id}$.  (Hint: the action of $\A_0$ on $S$ via divided difference operators is faithful.  See Section \ref{ssec:algebraprob}.)
\item
Suppose that $a \in \aNC$ satisfies the condition $\phi_0(as) = \phi_0(s)a$ for all $s \in S$.  Use (2) to show that $a$ must contain some affine Grassmannian term $A_w$, $w \in \aW^0$.  Conclude by Theorem \ref{thm:j0basis} that $a \in \aB$.
\end{enumerate}
\end{exercise}

A basic problem is to describe $\aB$ explicitly.  We shall do so in type $A$, following \cite{Lam:Schub} and connecting to affine Stanley symmetric functions.  For other types, see \cite{LSS:C, Pon}.

\subsection{Noncommutative $k$-Schur functions}
In the remainder of this section, we take $W = S_n$ and $\aW = \tS_n$.  We define
$$
\ah_k = \sum_{w \text{ cyclically decreasing: } \ell(w) = k} A_w
$$
for $k = 1,2,\ldots,n-1$.  Introduce an inner product $\ip{.,.}:\aNC \times \aNC \to \Z$ given by extending linearly $\ip{A_w,A_v}=\delta_{wv}$.

\begin{definition}\label{def:afstalgebra}
The affine Stanley symmetric function $\tF_w$ is given by 
$$
\tF_w = \sum_\alpha \ip{h_{\alpha_{\ell}} \cdots h_{\alpha_1}, A_w} x^\alpha
$$
where the sum is over compositions $\alpha = (\alpha_1,\alpha_2,\ldots,\alpha_\ell)$.
\end{definition}

Below we shall show that
\begin{theorem}[\cite{Lam:Schub}]\label{thm:ahcommute}
The elements $\ah_1,\ah_2,\ldots,\ah_{n-1} \in \aNC$ commute.
\end{theorem}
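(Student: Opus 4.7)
The plan is to prove that each $\ah_k$ lies in the affine Fomin-Stanley subalgebra $\aB \subset \aNC$; since $\aB$ is commutative (being the image under $\phi_0$ of the commutative Peterson algebra $\Pet$), the theorem follows immediately.

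By Proposition \ref{prop:affStan}, membership $\ah_k \in \aB$ is equivalent to $\phi_0(\ah_k\, s) = \phi_0(s)\, \ah_k$ for all $s \in S = \Sym(P)$. A short induction on the polynomial degree of $s$ reduces this to the linear case $s = \lambda \in P$, where the identity becomes $\phi_0(\ah_k\, \lambda) = 0$; the inductive step uses that, under the level-zero action, $w \in \aW$ acts linearly on $P$ (hence $\phi_0(w \cdot s) = \phi_0(s)$), while the sub-leading terms from commuting $A_w$ past $s$ come with $A$-coefficients of strictly positive degree and are killed by $\phi_0$ after one more application of the inductive hypothesis.

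Expanding $A_w \lambda = (w\cdot\lambda) A_w + \sum_\beta \langle \beta^\vee, \lambda\rangle A_{w s_\beta}$ (the Chevalley-type formula of Exercise 1 in Section \ref{sec:affinenilHecke}, with $\beta$ ranging over positive real roots for which $\ell(ws_\beta)=\ell(w)-1$), and noting that $(w\cdot\lambda) A_w$ is annihilated by $\phi_0$, the problem reduces to the purely combinatorial coroot identity
\begin{equation*}
\sum_{\substack{w\, \gtrdot\, v \\ w \text{ cyclically decreasing},\ \ell(w) = k}} \beta_{w,v}^\vee \;=\; 0 \qquad \text{in } P^\vee,
\end{equation*}
valid for every $v \in \tS_n$ with $\ell(v) = k-1$. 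Here $\beta_{w,v}$ denotes the unique positive real root with $w\, s_{\beta_{w,v}} = v$, and only the image of the affine coroot $\beta_{w,v}^\vee$ in the finite coweight lattice matters, since $\langle\delta,\lambda\rangle = 0$ on $P$.

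Establishing this coroot identity is the main obstacle. I would use the bijection of the Lemma in Section \ref{sec:affine} between length-$k$ cyclically decreasing affine permutations and size-$k$ proper subsets $S \subsetneq \Z/n\Z$, together with window notation for $\tS_n$, to parametrize the cyclically decreasing Bruhat covers $w \gtrdot v$ explicitly: each such $w$ is obtained from $v$ by left-multiplication by an affine transposition $(a,b)$ whose residue $b-a \pmod n$ is constrained by the local descent structure of $v$, and the associated coroot reads off directly as $e_a - e_b \bmod (1,\ldots,1)$. The sum should then vanish either by telescoping along the cyclic boundary of the supports $S$, or via a sign-reversing involution pairing covers that extend opposite ends of the cyclic interval. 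The base case $v = \id$, $k=1$, is already indicative: the covers are the simple reflections $s_i$, $i \in \Z/n\Z$, and the identity becomes $\sum_{i \in \Z/n\Z} \alpha_i^\vee \equiv 0$ in $P^\vee$, which holds because $\alpha_0^\vee$ acts on $P$ as $-\theta^\vee$ while $\theta^\vee = \alpha_1^\vee + \cdots + \alpha_{n-1}^\vee$ in type $A$.
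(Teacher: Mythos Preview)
Your overall strategy coincides with the paper's: show $\ah_k\in\aB$ by verifying the criterion of Proposition~\ref{prop:affStan}, and then invoke the commutativity of $\aB$. The paper proves $\ah_k\in\aB$ in the course of Theorem~\ref{thm:aB}, and deduces commutativity exactly as you do. The reduction from all $s\in S$ to linear $s=\lambda$ is also correct; in fact the operators $a\mapsto\phi_0(a\lambda)$ on $\aNC$ compose to give $\phi_0(as)$ for monomials $s$, so once they annihilate $\ah_k$ the higher-degree case is trivial.

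The difference is in how the linear case is handled. You invoke the Chevalley formula for general $A_w$ and reduce to a coroot identity for each $v$ of length $k-1$, but then only sketch the cancellation (``should then vanish either by telescoping \ldots\ or via a sign-reversing involution''). The paper instead works concretely in type $A$ coordinates: it embeds $S$ in $\Z[x_1,\ldots,x_n]$, uses only the elementary relations $A_j x_i = x_{i+1}A_i+1$, $x_{i-1}A_i-1$, or $x_iA_j$, exploits cyclic symmetry to set $i=1$, and then computes $\phi_0(A_I\,x_1)$ explicitly for each proper subset $I\subset\Z/n\Z$. The cancellation is then a concrete pairing: for each size-$(k-1)$ subset $J$ with $1\notin J$, the term $A_J$ arises with sign $+1$ from $I=J\cup\{1\}$ and with sign $-1$ from $I=J\cup\{r'\}$, where $r'$ is the residue just left of the maximal cyclic interval of $J$ ending at $0$. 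This is precisely the ``sign-reversing involution'' you allude to, but carried out; your proposal stops short of actually exhibiting it. Two small points: in your parametrization, $w=vs_\beta$ is right (not left) multiplication by a transposition; and going through the full Chevalley formula is unnecessary overhead here, since pushing $x_1$ through a cyclically decreasing product one simple generator at a time already gives everything.
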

Assuming Theorem \ref{thm:ahcommute}, Theorem \ref{thm:afstsymm} follows.

Define the noncommutative $k$-Schur functions $\s^{(k)}_\la \in \aNC$ by writing the $k$-Schur functions $s^{(k)}_\la$ (see Section \ref{sec:affine}) as a polynomial in $h_i$, and replacing $h_i$ by $\ah_i$.

\begin{proposition}[\cite{Lam:Schub}]\label{prop:noncommCauchy}
Inside an appropriate completion of $\aNC \otimes \La^{(n)}$, we have
$$
\sum_{\alpha} \ah_\alpha x^\alpha = \sum_{\la \in \Bo^n} \s^{(k)}_\la \tF_\la
$$
where the sum on the left hand side is over all compositions.
\end{proposition}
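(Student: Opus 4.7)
The plan is to reduce the identity to the standard duality between $\La_{(n)}$ and $\La^{(n)}$ under the Hall inner product, and then transfer it across the algebra homomorphism $\pi\colon \La_{(n)} \to \aNC$ determined by $h_i \mapsto \ah_i$ for $1 \le i \le n-1$. By Theorem~\ref{thm:ahcommute} the $\ah_i$ commute, so $\pi$ is a well-defined homomorphism of commutative rings, and unwinding the definitions gives $\pi(h_\la) = \ah_\la$ and $\pi(s^{(k)}_\la) = \s^{(k)}_\la$.

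First I would repackage the left hand side. Using commutativity of the $\ah_i$, the noncommutative monomial $\ah_\alpha$ depends only on the partition underlying the composition $\alpha$, so grouping compositions by partition type yields
$$
\sum_\alpha \ah_\alpha \otimes x^\alpha \;=\; \sum_\la \ah_\la \otimes m_\la(x).
$$
Moreover, a cyclically decreasing word in $\tS_n$ uses each element of $\Z/n\Z$ at most once and cannot involve all $n$ generators simultaneously (the constraints ``$i+1$ precedes $i$'' would otherwise form a cycle in a linear order), so $\ah_k = 0$ for $k \ge n$ and the sum collapses to $\la \in \Bo^n$.

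Next I would invoke duality. The Hopf algebras $\La_{(n)}$ and $\La^{(n)}$ carry two pairs of dual bases indexed by $\la \in \Bo^n$: $\{h_\la\} \leftrightarrow \{m_\la\}$ (by restriction of the standard pairing on $\La$) and $\{s^{(k)}_\la\} \leftrightarrow \{\tF_\la\}$ (by definition of the $k$-Schur basis). The canonical element of $\La_{(n)} \otimes \La^{(n)}$ is basis-independent, so
$$
\sum_{\la \in \Bo^n} h_\la \otimes m_\la \;=\; \sum_{\la \in \Bo^n} s^{(k)}_\la \otimes \tF_\la.
$$
Applying $\pi \otimes \id$ and combining with the previous display finishes the proof.

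The main obstacle is already packaged into Theorem~\ref{thm:ahcommute}: without commutativity of the $\ah_i$, neither the reindexing of the left hand side nor the very definition of $\s^{(k)}_\la$ (or of the homomorphism $\pi$) would make sense. Once that theorem is granted, what remains is formal linear algebra of dual Hopf algebras. The ``appropriate completion'' is simply the graded completion of $\aNC \otimes \La^{(n)}$ with respect to total degree (length on $\aNC$ matched with symmetric function degree on $\La^{(n)}$): in each degree only finitely many compositions contribute, so both sides lie naturally in that completion.
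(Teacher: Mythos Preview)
Your proof is correct and follows essentially the same route as the paper: both arguments establish the identity $\sum_\alpha h_\alpha(y)\,x^\alpha = \sum_{\la \in \Bo^n} s^{(k)}_\la(y)\,\tF_\la(x)$ in $\La_{(n)}\otimes \La^{(n)}$ via duality of the pairs $\{h_\la\}\leftrightarrow\{m_\la\}$ and $\{s^{(k)}_\la\}\leftrightarrow\{\tF_\la\}$, and then push it forward along the ring homomorphism $h_i\mapsto \ah_i$. You simply spell out the reindexing $\sum_\alpha \ah_\alpha x^\alpha = \sum_\la \ah_\la m_\la$ and the vanishing $\ah_k=0$ for $k\ge n$ more explicitly than the paper does.
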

\begin{proof}
Since $\{s^{(k)}_\la\}$ and $\{\tF_\la\}$ are dual bases, we have by standard results in symmetric functions \cite{EC2,Mac} that
$$
\sum_{\alpha} h_\alpha(y) x^\alpha = \sum_{\la \in \Bo^n} s^{(k)}_\la(y) \tF_\la(x)
$$
inside $\La_{(n)} \otimes \La^{(n)}$.  Now take the image of this equation under the map $\La_{(n)} \to \aNC$, given by $h_i \mapsto \ah_i$.
\end{proof}

It follows from Definition \ref{def:afstalgebra} and Proposition \ref{prop:noncommCauchy} that
$$
\tF_w = \sum_{\la} \ip{\s^{(k)}_\la,A_w} \tF_\la.
$$
Thus the coefficient of $A_w$ in $\s^{(k)}_\la$ is equal to the coefficient of $\tF_\la$ in $\tF_w$.  By Theorem \ref{thm:affineSchurbasis}, it follows that
\begin{equation}\label{eq:noncommkSchur}
\s^{(k)}_\la = A_v + \sum_{w \notin \aW^0} \alpha_{w\la} A_w.
\end{equation}
In particular, 
\begin{proposition}[\cite{Lam:Schub}]\label{prop:aB}
The subalgebra of $\aNC$ generated by $\ah_1,\ldots,\ah_{n-1}$ is isomorphic to $\La_{(n)}$, with basis given by $\s^{(k)}_\la$.
\end{proposition}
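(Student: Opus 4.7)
The plan is to define a map $\phi: \La_{(n)} \to \aNC$ by $h_i \mapsto \ah_i$ for $1 \leq i \leq n-1$ and show it is an algebra isomorphism onto the subalgebra generated by $\ah_1,\dots,\ah_{n-1}$. Because $\La_{(n)} = \Z[h_1,\dots,h_{n-1}]$ is a polynomial ring on algebraically independent generators, such a map is well-defined as an algebra homomorphism provided the $\ah_i$ commute pairwise. This commutativity is exactly Theorem \ref{thm:ahcommute}. By construction, the image of $\phi$ is precisely the subalgebra of $\aNC$ generated by $\ah_1,\dots,\ah_{n-1}$, and $\phi$ sends the $k$-Schur function $s^{(k)}_\la \in \La_{(n)}$ to its noncommutative counterpart $\s^{(k)}_\la \in \aNC$, by the very definition of $\s^{(k)}_\la$.

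The remaining task is injectivity, which follows from the triangularity statement in equation \eqref{eq:noncommkSchur}: for each $\la \in \Bo^n$ we have
$$
\s^{(k)}_\la = A_{v(\la)} + \sum_{w \notin \aW^0} \alpha_{w\la} A_w,
$$
where $v(\la) \in \aW^0$ is the affine Grassmannian permutation with $\la(v(\la)) = \la$. Since $\la \mapsto v(\la)$ is a bijection between $\Bo^n$ and $\aW^0$, the elements $\s^{(k)}_\la$ have pairwise distinct Grassmannian leading terms with respect to the $S$-basis $\{A_w\}$ of $\aNC$. Hence $\{\s^{(k)}_\la \mid \la \in \Bo^n\}$ is linearly independent in $\aNC$.

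Now recall from Theorem \ref{thm:affineSchurbasis} that $\{\tF_\la\}$ is a basis of $\La^{(n)}$; by definition the dual basis $\{s^{(k)}_\la\}$ is then a $\Z$-basis of $\La_{(n)}$. The homomorphism $\phi$ therefore carries a basis of $\La_{(n)}$ to a linearly independent subset of $\aNC$, so it is injective. Combined with the observation that its image is the subalgebra generated by $\ah_1,\dots,\ah_{n-1}$, this gives the desired isomorphism and identifies $\{\s^{(k)}_\la \mid \la \in \Bo^n\}$ as a basis of the image.

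No step here is difficult in isolation — the substantive inputs, namely the commutativity of the $\ah_i$ (Theorem \ref{thm:ahcommute}) and the triangularity \eqref{eq:noncommkSchur} of $\s^{(k)}_\la$, have already been established. The only thing worth being careful about is that linear independence in $\aNC$ really does follow from the triangular expansion: this is immediate because the leading terms $A_{v(\la)}$ are distinct elements of the $S$-basis $\{A_w\}_{w\in\aW}$ of $\aNC$, so no cancellation among leading terms is possible.
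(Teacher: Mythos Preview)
Your argument is correct and is exactly the reasoning the paper intends: the proposition is stated there with the preface ``In particular,'' immediately after equation \eqref{eq:noncommkSchur}, so the paper's proof is the same triangularity argument you have spelled out (commutativity from Theorem \ref{thm:ahcommute} gives a well-defined surjection $\La_{(n)}\to\langle\ah_1,\dots,\ah_{n-1}\rangle$, and \eqref{eq:noncommkSchur} gives linear independence of the $\s^{(k)}_\la$, hence injectivity). One small slip: you twice call $\{A_w\}$ an ``$S$-basis'' of $\aNC$, but $\aNC$ is the affine nilCoxeter algebra over $\Z$, not an $S$-module---it is $\afA$ that has $\{A_w\}$ as an $S$-basis (Lemma \ref{lem:Abasis}); for $\aNC$ you should say $\Z$-basis. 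This does not affect the argument.
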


\subsection{Cyclically decreasing elements}
For convenience, for $I \subsetneq \Z/n\Z$ we define $A_I:=A_w$, where $w$ is the unique cyclically decreasing affine permutation which uses exactly the simple generators in $I$.

\begin{theorem}[\cite{Lam:Schub}]\label{thm:aB}
The affine Fomin-Stanley subalgebra $\aB$ is generated by the elements $\ah_k$, and we have $j_w^0 = \s^{(k)}_\la$ where $w \in \aW^0$ satisfies $\la(w) = \la$.
\end{theorem}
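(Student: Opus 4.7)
The plan is to prove the theorem in two stages: first, verify that each generator $\ah_k$ actually lies in $\aB$; then use the uniqueness clause of Theorem~\ref{thm:j0basis} together with Proposition~\ref{prop:aB} to identify $\s^{(k)}_\la$ with $j^0_w$ and deduce that the $\ah_k$ generate all of $\aB$.

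For the first stage, I would invoke the characterization in Proposition~\ref{prop:affStan}: one must show that $\phi_0(\ah_k\,s) = \phi_0(s)\,\ah_k$ for every $s \in S$. Since $\phi_0$ is $\Z$-linear and $S = \Sym(P)$ is generated over $\Z$ by the simple roots $\alpha_i$, this reduces to proving
$$
\phi_0(\ah_k\,\alpha_i) \;=\; 0 \qquad \text{for all } i \in \aI.
$$
Expanding by the Chevalley formula from Exercise~1 of Section~\ref{sec:affinenilHecke},
$$
A_w\,\alpha_i \;=\; (w\cdot \alpha_i)\,A_w \;+\; \sum_{\substack{\beta \,\in\, R^+_{\af}\\ \ell(ws_\beta)\,=\,\ell(w)-1}} \langle\beta^\vee,\alpha_i\rangle\, A_{ws_\beta},
$$
the leading term is killed by $\phi_0$ because $w\cdot\alpha_i$ is a real affine root with zero constant part. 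What remains after summing over cyclically decreasing $w$ of length $k$ is a purely combinatorial identity in $\aNC$: for each $v$ of length $k-1$, the coefficient of $A_v$ should vanish. This cancellation is the main obstacle. I expect to handle it by fixing a target $v$ and pairing up the cyclically decreasing covers $w$ of $v$ so that the relevant Chevalley coefficients $\langle \beta^\vee,\alpha_i\rangle$ cancel in pairs; a careful case analysis on the cyclic support of $v$ together with which simple reflection $s_j$ is inserted to reach $w$ should produce the required sign-reversing involution. Once this is established, $\ah_k \in \aB$, and since $\aB$ is commutative by Lemma~\ref{lem:Petcomm}, Theorem~\ref{thm:ahcommute} falls out as a by-product, so the noncommutative $k$-Schur function $\s^{(k)}_\la$ is an unambiguously defined element of the subalgebra $\langle \ah_1,\ldots,\ah_{n-1}\rangle \subseteq \aB$.

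For the second stage, I would combine Proposition~\ref{prop:aB} with the structural relation~\eqref{eq:noncommkSchur}: each $\s^{(k)}_\la$ lies in $\aB$ and has a unique affine Grassmannian term $A_v$, where $v \in \aW^0$ is characterized by $\la(v) = \la$. The uniqueness portion of Theorem~\ref{thm:j0basis}---that $j^0_w$ is the only element of $\aB$ whose unique affine Grassmannian term is $A_w$---then forces $\s^{(k)}_\la = j^0_v$, which is precisely the identification claimed. Finally, because $\{j^0_w \mid w \in \aW^0\}$ is an $\aB$-basis and every one of these basis elements has now been realized as a noncommutative $k$-Schur function inside $\langle \ah_1,\ldots,\ah_{n-1}\rangle$, the containment $\langle \ah_1,\ldots,\ah_{n-1}\rangle \subseteq \aB$ is in fact an equality, and the theorem is proved.
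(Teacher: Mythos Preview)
Your proposal is correct and follows essentially the same two-stage strategy as the paper: first show $\ah_k\in\aB$ via the commutation criterion of Proposition~\ref{prop:affStan}, then use \eqref{eq:noncommkSchur} together with the uniqueness clause of Theorem~\ref{thm:j0basis} to identify $\s^{(k)}_\la$ with $j^0_w$ and conclude generation. The only notable difference is tactical: the paper works with the coordinate weights $x_i$ rather than the simple roots $\alpha_i$, and exploits the $\Z/n\Z$ cyclic symmetry of the $\ah_k$ to reduce the verification of $\phi_0(\ah_k\,x_i)=0$ to the single case $i=1$, after which the cancellation becomes an explicit two-term pairing on subsets $J\subsetneq\Z/n\Z$ of size $k-1$; this makes the ``sign-reversing involution'' you anticipate concrete and short.
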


\begin{example}\label{ex:js3}
Let $W = S_3$.  A part of the $j$-basis for $\aB$ is 
\begin{align*}
j^0_{\id}&=1\\
j^0_{s_0} &= A_0+A_1 + A_2 \\
j^0_{s_1s_0}&= A_{10}+A_{21}+A_{02} \\
j^0_{s_2s_0}&=A_{01}+A_{12}+A_{20} \\
j^0_{s_2s_1s_0}&=A_{101}+A_{102}+A_{210}+A_{212}+A_{020}+A_{021} \\
j^0_{s_1s_2s_0}&=A_{101}+A_{201}+A_{012}+A_{212}+A_{020}+A_{120}
\end{align*}
\end{example}

We give a slightly different proof to the one in \cite{Lam:Schub}.
\begin{proof}
We begin by showing that $\ah_k \in \aB$.  We will view $S$ as sitting inside the polynomial ring $\Z[x_1,x_2,\ldots,x_n]$; the commutation relations of $\afA$ can easily be extended to include all such polynomials.  To show that $\ah_k \in \aB$ it suffices to check that $\phi_0(\ah_k \,x_i) = 0$ for each $i$.  But by the $\Z/n\Z$-symmetry of the definition of cyclically decreasing, we may assume $i = 1$.  

We note that 
$$
A_j\, x_i = \begin{cases}
x_{i+1}A_i  + 1& \mbox{$j = i$}\\
x_{i-1}A_i - 1&\mbox{$j = i-1$} \\
x_i A_j & \mbox{otherwise.} 
\end{cases}
$$
Now let $I \subsetneq \Z/n\Z$ be a subset of size $k$.  Then 
$$
\phi_0(A_I x_1) = \begin{cases}
A_{I \setminus\{1\}}& \mbox{$r,r+1,\ldots,n-1,0,1 \in I$ but $r-1 \notin I$} \\
-\sum_{i =r}^{i = 0} A_{I \setminus \{i\}}& \mbox{$r,r+1,\ldots,n-1,0 \in I$ but $r-1,1 \notin I$}\\
%A_{I \setminus\{1\}}& \mbox{$1 \in I$ but $0 \notin I$}
0 &\mbox{otherwise.} 
\end{cases}
$$
Given a size $k-1$ subset $J \subset \Z/n\Z$ not containing $1$, we see that the term $A_J$ comes up in two ways: from $\phi_0(A_{J \cup \{1\}}x_1)$ with a positive sign, and from $\phi_0(A_{J \cup \{r'\}}x_1)$ with a negative sign, where $r+1,\ldots,n-1,0$ all lie in $J$.

Thus $\ah_k \in \aB$ for each $k$.  It follows that the $\ah_k$ commute, and by \eqref{eq:noncommkSchur}, it follows that $j_w^0 = \s^{(k)}_\la$ and in particular $\ah_i$ generate $\aB$.
\end{proof}

\subsection{Coproduct}
The map $\Delta: \afA \to \afA \otimes_S \afA$ equips $\Pet$ with the structure of a Hopf-algebra over $S$: to see that $\Delta$ sends $\Pet$ to $\Pet \otimes_S \Pet$, one uses $\Delta(t_\la) = t_\la \otimes t_\la$ and Lemma \ref{lem:Ptrans}.  Applying $\phi_0$, the affine Fomin-Stanley algebra $\aB$ obtains a structure of a Hopf algebra over $\Z$.

\begin{theorem}[\cite{Lam:Schub}]\label{thm:aBHopf}
The map $\La_{(n)} \to \aB$ given by $h_i \mapsto \ah_i$ is an isomorphism of Hopf algebras.
\end{theorem}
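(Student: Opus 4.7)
Proposition~\ref{prop:aB} combined with Theorem~\ref{thm:aB} already establishes that the map $h_i \mapsto \ah_i$ extends to an algebra isomorphism $\La_{(n)} \to \aB$, so the content of Theorem~\ref{thm:aBHopf} is compatibility of the two coproducts (the antipode is then automatic, since both sides are connected graded bialgebras). Both $\Delta_{\La_{(n)}}$ and $\Delta_\aB$ are algebra homomorphisms, and $\La_{(n)}$ is generated as an algebra by $h_1,\ldots,h_{n-1}$; since $\Delta(h_k) = \sum_{j=0}^{k} h_j \otimes h_{k-j}$ in $\La_{(n)}$, the theorem reduces to verifying
\begin{equation*}
\Delta_\aB(\ah_k) \;=\; \sum_{j=0}^{k} \ah_j \otimes \ah_{k-j}, \qquad 1 \leq k \leq n-1.
\end{equation*}

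The coproduct on $\aB$ is inherited from $\Pet$ via $\phi_0 \otimes \phi_0$, where the coproduct on $\Pet$ in turn comes from that on $\afA$, given on generators by $\Delta(A_i) = A_i \otimes 1 + 1 \otimes A_i - \alpha_i A_i \otimes A_i$ as in Section~\ref{sec:nilHeckecoproduct}.  My primary approach is to lift each $\ah_k$ to $\Pet$ and exploit the group-like identity $\Delta(t_\mu) = t_\mu \otimes t_\mu$.  Concretely, Theorem~\ref{thm:aB} identifies $\ah_k$ with $j^0_{w_k}$ for the affine Grassmannian $w_k$ satisfying $\la(w_k) = (k)$, and the $j$-basis element $j_{w_k} \in \Pet$ is a canonical lift.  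By Lemma~\ref{lem:Ptrans} we may write $j_{w_k} = \sum_\mu c_\mu\, t_\mu$ with $c_\mu \in \Frac(S)$; then $\Delta(j_{w_k}) = \sum_\mu c_\mu\, t_\mu \otimes t_\mu$, and applying $\phi_0 \otimes \phi_0$ followed by re-expansion in the $j^0$-basis (via Theorem~\ref{thm:j0basis}) should yield the desired coproduct formula.

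An alternative, more elementary route computes $\Delta(\ah_k)$ directly in $\afA \otimes_S \afA$.  Expanding $\ah_k = \sum_{|I|=k} A_I$ over strict subsets $I \subsetneq \Z/n\Z$ and using $\Delta(A_I) = \prod_j \Delta(A_{i_j})$ for a cyclically decreasing reduced word $i_1\cdots i_k$ of $A_I$, one obtains $3^k$ terms per $I$, indexed by assignments ``left,'' ``right,'' ``both'' to each letter.  After moving scalars to the left via Lemma~\ref{lem:waction} and applying $\phi_0 \otimes \phi_0$, every summand whose coefficient has positive degree is annihilated.  The combinatorial claim to verify is then that the surviving contributions are exactly those coming from ordered disjoint splittings $I = I_L \sqcup I_R$ into two cyclically decreasing subsets, each contributing $A_{I_L} \otimes A_{I_R}$; summing over $|I|=k$ regroups as $\sum_{j+l=k} \ah_j \otimes \ah_l$, which is what we want.

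The principal obstacle in either approach is scalar bookkeeping.  In the direct expansion, the relation $A_i \alpha = (s_i \cdot \alpha) A_i + \langle \alpha_i^\vee, \alpha \rangle$ produces \emph{constant-term} contributions as scalars are moved leftward, so the $-\alpha_i A_i \otimes A_i$ pieces do not simply disappear under $\phi_0$; controlling the cancellations that leave only the splitting terms requires care, though one expects a clean induction on $k$ exploiting the $\Z/n\Z$-symmetry of cyclic decreasingness.  The $j$-basis route trades this difficulty for computing the translation-basis coefficients of $j_{w_k}$ explicitly; this appears to be the more conceptual path, since it reduces the Hopf structure of $\aB$ directly to the group-like behaviour of the translation elements $t_\mu$ in $\Pet$.
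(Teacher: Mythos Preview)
Your reduction to the single identity $\Delta(\ah_k)=\sum_{j=0}^{k}\ah_j\otimes\ah_{k-j}$ is exactly what the paper does: after invoking Proposition~\ref{prop:aB} (algebra isomorphism), the paper says only that the coproduct identity ``can be done bijectively, using the definition in Section~\ref{sec:nilHeckecoproduct}.''  So at the level of strategy you and the paper agree, and both leave the details of the coproduct identity as an exercise.

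Of your two routes, the second (direct expansion of $\Delta(A_I)$ and then $\phi_0\otimes\phi_0$) is the one the paper has in mind.  Your worry about scalar bookkeeping is legitimate but more tractable than you suggest.  A clean way to organize it is to rewrite \eqref{E:DeltaT} as
\[
\Delta(A_i)=A_i\otimes s_i+1\otimes A_i
\qquad\text{(equivalently }s_i\otimes A_i+A_i\otimes 1\text{)},
\]
using $s_i=1-\alpha_iA_i$ and the fact that the tensor is over $S$.  Expanding $\Delta(A_I)=\prod_j\Delta(A_{i_j})$ with this form, each term is indexed by a subset $L\subseteq[k]$; on one side you get a genuine subword $A_{I_L}$ (still cyclically decreasing), and on the other a mixed product of $s$'s and $A$'s.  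The point is that $\phi_0(w)=1$ for $w\in\aW$, so after $\phi_0\otimes\phi_0$ the $s$-factors want to become identities; the residual constant terms produced when you push the $s$'s past the $A$'s are exactly what reorganize the sum over all $I$ into $\sum_j\ah_j\otimes\ah_{k-j}$.  Summing over all $|I|=k$ and using the $\Z/n\Z$-symmetry makes the cancellation systematic, and this is the ``bijective'' argument the paper gestures at.

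Your first route via $j_{w_k}=\sum_\mu c_\mu t_\mu$ and $\Delta(t_\mu)=t_\mu\otimes t_\mu$ is conceptually appealing, but note that the $c_\mu$ lie in $\Frac(S)$, not in $S$, so $\phi_0\otimes\phi_0$ is not directly applicable to the translation expansion; you would have to first re-expand in the $A_v\otimes A_w$ basis, at which point you have gained little over the direct computation.  The paper does not pursue this route.
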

By Proposition \ref{prop:aB}, to establish Theorem \ref{thm:aBHopf} it suffices to show that $\Delta(\ah_k) = \sum_{j=0}^k \ah_j \otimes \ah_{k-j}$.  This can be done bijectively, using the definition in Section \ref{sec:nilHeckecoproduct}.

\subsection{Exercises and Problems}
Let $W$ be of arbitrary type.
\begin{enumerate}
\item 
Find a formula for $j_{s_0}^0$.  See \cite[Proposition 2.17]{LS:DGG}.
\item
For $W = S_3$, use Proposition \ref{prop:ts3} to give an explicit formula for the noncommutative $k$-Schur functions.
\item (Dynkin automorphisms)
Let $\omega$ be an automorphism of the corresponding Dynkin diagram.  Then $\omega$ acts on $\aNC$, and it is easy to see that $\omega(\aB) = \aB$.  Is $\aB$ invariant under $\omega$?  (For $W = S_n$ this follows from Theorem \ref{thm:aB}, since the $\ah_k$ are invariant under cyclic symmetry.) \item (Generators)
Find generators and relations for $\aB$, preferably using a subset of the $j$-basis as generators.  See \cite{LSS:C, Pon} for the classical types.  See also the proof of Proposition \ref{prop:gen} below.
\item (Power sums \cite[Corollary 3.7]{BSZ})
Define the noncommutative power sums $\mathbf{p}_k \in \aB$ as the image in $\aB$ of $p_k \in \La_{(n)}$ under the isomorphism of Proposition \ref{prop:aB}.  Find an explicit combinatorial formula for $\mathbf{p}_k$.  (See also \cite{FG}.)
\item
Is there a nice formula for the number of terms in the expression of $j_w^0$ in terms of $\{A_x\}$?  This is an ``affine nilCoxeter'' analogue of asking for the number of terms $s_\la(1,1,\ldots,1)$ in a Schur polynomial.
\item 
Find a combinatorial formula for the coproduct structure constants in the $j$-basis.  These coefficients are known to be positive \cite{Kum}.
\item 
Find a combinatorial formula for the $j$-basis.  It follows from work of Peterson (see also \cite{Lam:Schub, LS:QH}) that the coefficients $j_w^x$ are positive.
\end{enumerate}

\section{Finite Fomin-Stanley subalgebra}
\label{sec:finiteFS}
In this section we return to general type.

There is a linear map $\kappa:\A \to \aNC$ given by 
$$
\kappa(A_w) = \begin{cases} A_w & \mbox{$w \in W$} \\
0 & \mbox{otherwise.}
\end{cases}
$$
The {\it finite Fomin-Stanley algebra} $\B$ is the image of $\Pet$ under $\kappa$.  Since $\kappa(\ah_k) = \h_k$, this agrees with the definitions in Section \ref{sec:algebra}.

\begin{conjecture}\label{conj:finite}\
\begin{enumerate}
\item
The finite Fomin-Stanley algebra $\B$ satisfies Conjecture \ref{conj:LP}.
\item
The image $\kappa(j_w^0) \in \B$ of the $j$-basis element $j_w^0 \in \aB$ is a nonnegative integral linear combination of the $b_I$-basis.
\item
If $w \in \aW^0$ is such that there is a Dynkin diagram automorphism $\omega$ so that $\omega(w) \in W$, then $\kappa(j_w^0)$ belongs to the $b_I$-basis.
\end{enumerate}
\end{conjecture}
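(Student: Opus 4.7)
The plan is to pull structural properties of $\B$ back to the affine Fomin--Stanley algebra $\aB$, where the $j^0$-basis from Theorem \ref{thm:j0basis} is already well understood, and then to analyze the kernel of $\kappa$. The first step is to note that the set-theoretic projection $\pi:\aNC \to \A_0$ sending $A_w$ to $A_w$ if $w\in W$ and to $0$ otherwise is in fact an algebra homomorphism: if $w\in W$ and $wv\in W$ then $v=w^{-1}(wv)\in W$, and lengths in $W$ agree with lengths in $\aW$ on the subgroup $W$, so each product $A_w A_v$ in $\aNC$ either has both factors in $\A_0$ (in which case $\pi$ is the identity on the product) or vanishes on both sides of the putative identity. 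Since $\kappa=\pi\circ\phi_0$ and $\aB=\phi_0(\Pet)$ is commutative (Lemma \ref{lem:Petcomm}), we obtain for free that $\B=\pi(\aB)$ is a commutative subalgebra of $\A_0$.

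Next I would define candidate basis elements $b_I:=\kappa(j^0_{w(I)})$ for a suitable family of affine Grassmannians $w(I)$ indexed by $I\in J(R^+,\prec)$. Nonnegativity of each $b_I$ as a combination of $A_w$'s is immediate from the (Peterson) positivity of $j^0_w$ in the $A_x$-basis quoted in the exercises of Section \ref{sec:affineFS}, because $\kappa$ merely discards the non-finite terms. Positivity of the structure constants would then follow formally: since $\pi$ is a ring homomorphism, $b_I\cdot b_J=\pi(j^0_{w(I)}j^0_{w(J)})$ inherits a nonnegative expansion in the $b_K$'s from the corresponding expansion of $j^0_{w(I)}j^0_{w(J)}$ in the $j^0$-basis of $\aB$.

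The main obstacle is the Hilbert-series and exponents statement (parts (1)--(2) of Conjecture \ref{conj:LP}). In type $A$ the vanishing $\s_\la=0$ for $\la\not\subseteq\delta_{n-1}$ was forced by Edelman--Greene insertion (Lemma \ref{lem:EGshape}), producing exactly the Catalan count $|J(R^+,\prec)|$ and the exponent generators $\h_1,\dots,\h_{n-1}$; no comparable insertion is known outside type $A$. A uniform approach would be to exhibit exponent generators $\h_{e_i}\in\B$ as $\kappa$-images of distinguished low-degree $j^0$-basis elements of $\aB$, verify freeness over $\Q$ in the expected degrees by inductively cutting $\B$ along maximal parabolic sub-root-systems (where the recursion matches that of the generalized Catalan numbers), and then match the full Hilbert series. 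Establishing freeness with the correct exponent degrees is where I expect the most work.

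For parts (2) and (3) of Conjecture \ref{conj:finite} I would attack (3) first as a warm-up. The Dynkin automorphism $\omega$ acts on $\afA$ as an algebra automorphism preserving $\Pet$; when $\omega(w)\in W$, the element $A_{\omega(w)}$ should arise as a distinguished leading finite term of $\omega(j^0_w)$, and dually $\kappa(j^0_w)$ should have a unique highest term determined by $w$ itself. Granted part (1), uniqueness of the $b_I$-basis element with this leading term then forces $\kappa(j^0_w)=b_I$ for the corresponding ideal. Part (2) reduces to a positive change-of-basis computation within $\kappa(\aB)$, which I would prove by induction on the dominance order on $\la(w)$, paralleling the argument behind the positive expansion of affine Stanley symmetric functions in $k$-Schur functions (Theorem \ref{thm:affstpos}).
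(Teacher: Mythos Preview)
The statement you are trying to prove is a \emph{conjecture}: the paper does not prove it, and there is no proof in the paper for you to compare against.  What follows Conjecture~\ref{conj:finite} in the paper is evidence only: a computation in $S_4$ illustrating part~(2), a full low-degree computation in $G_2$ verifying parts~(1)--(3) by hand, and Proposition~\ref{prop:gen}, which uses the identification $\aB\simeq H_*(\Gr_G,\Z)$ to establish just the piece of Conjecture~\ref{conj:LP}(1) about generators in the exponent degrees over~$\Q$.  So the correct assessment of your proposal is not ``same route'' or ``different route'' but rather whether your outline is a plausible attack on an open problem.

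On that score there is a genuine gap.  Your plan hinges on setting $b_I:=\kappa(j^0_{w(I)})$ for a to-be-specified family $\{w(I)\}\subset\aW^0$, and then deducing structure-constant positivity from the positivity of the $j^0$-basis.  But already in type~$A$ the $b_I$-basis is $\{\s_\la:\la\subseteq\delta_{n-1}\}$, and the paper's own example immediately after Conjecture~\ref{conj:finite} computes
\[
\kappa\bigl(\s^{(k)}_{221}\bigr)=\s_{221}+\s_{32},
\]
a genuine sum of two distinct $b_I$'s.  So $\s_{221}$ and $\s_{32}$ are each required as basis elements, yet neither is individually of the form $\kappa(j^0_w)$ for any single~$w$; the set $\{\kappa(j^0_w):w\in\aW^0\}$ simply does not contain the $b_I$-basis.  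Consequently your structure-constant argument collapses: writing $b_I b_J=\kappa(j^0_{w(I)}j^0_{w(J)})=\sum_v c_v\,\kappa(j^0_v)$ with $c_v\ge 0$ gives a nonnegative expansion in the $\kappa(j^0_v)$, but expressing each $\kappa(j^0_v)$ nonnegatively in the $b_K$ is exactly part~(2) of the conjecture you are trying to prove, so the argument is circular.  (Your observation that $\pi:\aNC\to\A_0$ is a ring homomorphism is correct and useful, and the commutativity of $\B$ does follow; but the rest of the outline, including the Hilbert-series step you flag yourself, remains at the level of a wish list rather than a strategy.)
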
 

\begin{example}
Let $W = S_4$ and $\aW = \tS_4$.  Then $\B$ is described in Example \ref{ex:S4}.  One calculates that $s^{(k)}_{221} = s_{221}+s_{32}$ so that
$$
\kappa(\s^{(k)}_{221}) = \s_{221}+\s_{32} = A_{32132}+A_{23123} \in \B.
$$
This supports Conjecture \ref{conj:finite}(2), and shows that $\kappa(j_w^0)$ does not have to be equal to 0 or to some $b_I$.
\end{example}

In the following we allow ourselves some geometric arguments and explicit calculations in other types to provide evidence for Conjecture \ref{conj:finite}.
\begin{example}
Let $W = G_2$ with long root $\alpha_1$ and short root $\alpha_2$.  Thus $\alpha_0$ is also a long root for $\tilde G_2$.  As usual we shall write $A_{i_1 i_2 \cdots i_\ell}$ for $A_{s_{i_1}s_{i_2} \cdots s_{i_\ell}}$ and similarly for the $j$-basis.  The affine Grassmannian elements of length less than or equal to 5 are 
$$
\id, s_0, s_1s_0, s_2s_1s_0, s_1s_2s_1s_0, s_2s_1s_2s_1s_0,s_0s_1s_2s_1s_0.
$$
And the $j$-basis is given by
\begin{align*}
j_\id^0 &=1 \\
j_0^0 &= A_0 + 2A_1 + A_2 \\
j_{10}^0 &= \frac{1}{2}(j_0^0)^2 \\
j_{210}^0 &= \frac{1}{2}(j_0^0)^3 \\
j_{1210}^0 &= \frac{1}{4}(j_0^0)^4 \\
j_{21210}^0&= A_{21210} + A_{21201}+2A_{21012}+A_{12102}+3A_{12101}+2A_{12121}\\
&+A_{12012}+A_{02121}+3A_{01201}+A_{01212} \\
j_{01210}^0 &= A_{01210}+A_{01201}+A_{02121}+A_{12012}+A_{12101}+A_{12102}+A_{21012}\\ &+A_{21201}+A_{21212} 
\end{align*}
which can be verified by using Theorem \ref{thm:j0basis}.  Note that $j_{21210}^0+j_{01210}^0 = \frac{1}{4}(j_0^0)^5$.  Thus $\B$ has basis
$$
\id, \ 2A_1 + A_2,  \ A_{12}+A_{21}, \ 2A_{121}+A_{212}, \ A_{1212}+A_{2121}, \ 2A_{12121}, \ A_{21212}, \ ?A_{121212}
$$
where the coefficient of $A_{121212}$ depends on the $j$-basis in degree 6.
The root poset of $G_2$ is $\alpha_1,\alpha_2 \prec \alpha_1+\alpha_2 \prec \alpha_1 + 2\alpha_2 \prec \alpha_1 + 3\alpha_2 \prec 2\alpha_1+3\alpha_2$.  Both Conjectures \ref{conj:LP} and \ref{conj:finite} hold with this choice of basis.
\end{example}

\begin{prop}\label{prop:gen}
Over the rationals, the finite Fomin-Stanley subalgebra $\B \otimes_\Z \Q$ has a set of generators in degrees equal to the exponents of $W$.
\end{prop}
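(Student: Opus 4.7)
The plan is to promote the linear projection $\kappa$ to a ring homomorphism, and then transfer a known generating set from $\aB$ to $\B$. Specifically, I would first establish that $\kappa$, defined on the basis of the affine nilCoxeter algebra by $A_w \mapsto A_w$ for $w \in W$ and $A_w \mapsto 0$ otherwise, is in fact a ring homomorphism $\aNC \to \A_0$. The input is that $W$ is the standard parabolic subgroup of $\aW$ generated by $\{s_i \mid i \in \fI\}$, so a standard Coxeter-theoretic fact gives that $u \in W$ iff some (equivalently, every) reduced word of $u$ avoids $s_0$. Combined with the fact that a concatenation of reduced words for $u$ and $v$ is reduced whenever $\ell(uv) = \ell(u) + \ell(v)$, this yields the dichotomy: whenever $A_u A_v \neq 0$ in $\aNC$, one has $uv \in W$ if and only if both $u, v \in W$. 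A short case check on pairs $(A_u, A_v)$ then gives $\kappa(A_u A_v) = \kappa(A_u)\kappa(A_v)$.

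Granted this, since $\B = \kappa(\Pet) = \kappa(\aB)$ (the map $\kappa$ kills the scalars in $\Pet$, so it factors through $\phi_0$), the projection $\kappa$ restricts to a surjective ring homomorphism $\aB \twoheadrightarrow \B$. The problem thus reduces to producing generators of $\aB \otimes_\Z \Q$ in degrees equal to the exponents $e_1, \ldots, e_r$ of $W$: their images under $\kappa$ will then generate $\B \otimes_\Z \Q$, with the same degrees whenever nonzero. For the reduction, I would invoke the geometric interpretation outlined in Section \ref{sec:geom}: Peterson's theorem identifies $\aB \otimes \Q$ with the rational Pontryagin homology of the affine Grassmannian $\Gr$, with the length grading on $\aB$ corresponding to half the cohomological grading. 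By the classical theorem of Bott, this Pontryagin ring is polynomial on homogeneous primitive generators $u_1, \ldots, u_r$ whose degrees are precisely $e_1, \ldots, e_r$, consistent with the type $A$ calculation $\aB \cong \La_{(n)} = \Q[h_1, \ldots, h_{n-1}]$, whose generator degrees $1, 2, \ldots, n-1$ are the exponents of $S_n$.

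The central obstacle is really the first step, proving that the linear map $\kappa$ is multiplicative, since it is prescribed only at the level of basis vectors; once this is in hand, the rest is either formal algebra or a (deep but standard) appeal to topology. A minor subtlety is that a priori some of the $\kappa(u_i)$ could vanish, but since the proposition only asserts the existence of a generating set whose degrees lie among the exponents — rather than the strictly stronger form in Conjecture \ref{conj:LP}(1) that requires one generator in each exponent degree — this does not harm the conclusion. The explicit computations in type $S_3$ and type $G_2$ above moreover confirm that the $\kappa(u_i)$ are visibly nonzero in small examples.
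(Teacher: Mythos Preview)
Your approach is essentially the same as the paper's: both identify $\aB\otimes\Q$ with $H_*(\Gr_G,\Q)$, invoke the classical fact that this homology is polynomial on generators in the exponent degrees (you cite Bott, the paper cites Ginzburg and transgression from $H^*(K,\Q)$), and then push these generators down to $\B$ via $\kappa$. The paper takes the last step for granted, simply asserting that the images of generators under $\kappa$ generate $\B$; you supply the missing justification that $\kappa$ is a ring homomorphism, and your argument for this (using that $W$ is a parabolic subgroup, so that for a length-additive product $uv\in W$ forces $u,v\in W$) is correct.
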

\begin{proof}
Let $\Gr_G$ denote the affine Grassmannian of the simple simply-connected complex algebraic group with Weyl group $W$.
It is known \cite[Theorem 5.5]{Lam:Schub} that $\aB \simeq H_*(\Gr_G,\Z)$.  But over the rationals the homology $H_*(\Gr_G,\Q)$ of the affine Grassmannian is known to be generated by elements in degrees equal to the exponents of $W$, see \cite{Gin}.  (In cohomology these generators are obtained by {\it transgressing} generators of $H^*(K,\Q)$ which are known to correspond to the degrees of $W$.)  Since these elements generate $\aB$, their image under $\kappa$ generate $\B$.
\end{proof}

\subsection{Problems}
\begin{enumerate}
\item
Find a geometric interpretation for the finite Fomin-Stanley algebras $\B$ and the conjectural basis $b_I$.
\item
Find an equivariant analogue of the finite Fomin Stanley algebra $\B$, with the same relationship to $\B$ as $\Pet$ has to $\aB$.  Extend Conjectures \ref{conj:LP} and \ref{conj:finite} to the equivariant setting.
\end{enumerate}

\section{Geometric interpretations}
\label{sec:geom}
In this section we list some geometric interpretations of the material we have discussed.  Let $G$ be the simple simply-connected complex algebraic group associated to $W$.  (Co)homologies are with $\Z$-coefficients.

\begin{enumerate}
\item 
The affine Fomin-Stanley subalgebra $\aB$ is Hopf-isomorphic to the homology $H_*(\Gr_G)$ of the affine Grassmannian associated to $G$.  The $j$-basis $\{j_w\}$ is identified with the Schubert basis.  \cite{Pet} \cite[Theorem 5.5]{Lam:Schub}
\item
The affine Schur functions $\tF_\la$ represent Schubert classes in $H^*(\Gr_{SL(n)})$.  \cite[Theorem 7.1]{Lam:Schub}
\item
The affine Stanley symmetric functions are the pullbacks of the cohomology Schubert classes from the affine flag variety to the affine Grassmannian. \cite[Remark 8.6]{Lam:Schub}
\item
The positivity of the affine Stanley to affine Schur coefficients is established via the connection between the homology of the $\Gr_G$ and the quantum cohomology of $G/B$.  \cite{Pet} \cite{LS:QH} \cite{LL}
\item
The (positive) expansion of the affine Schur symmetric functions $\{\tF_\la\}$ for $\aW = \tS_n$ in terms of $\{\tF_\mu\}$ for $\aW = \tS_{n-1}$ has an interpretation in terms of the map $H^*(\Omega SU(n+1)) \to H^*(\Omega SU(n))$ induced by the inclusion $\Omega SU(n) \hookrightarrow \Omega SU(n+1)$ of based loop spaces.  \cite{Lam:ASP}
\item
Certain affine Stanley symmetric functions represent the classes of {\it positroid varieties} in the cohomology of the Grassmannian. \cite{KLS}
\item
The expansion coefficients of Stanley symmetric functions in terms of Schur functions are certain quiver coefficients. \cite{Buc2}
\end{enumerate}

\end{document}